\numberwithin{equation}{section}
\newcommand{\jap}[1]{\langle #1 \rangle}
\def\a{\alpha}
\def\b{\beta}
\def\c{\gamma}
\def\e{\varepsilon}
\def\f{\varphi}
\def\g{\psi}
\def\l{\lambda}
\def\m{\mu}
\def\n{\nu}
\def\o{\omega}
\def\s{\sigma}
\def\t{\tau}
\def\x{\xi}
\def\y{\eta}
\def\z{\zeta}
\renewcommand{\O}{\Omega}
\newcommand{\Op}{\mathrm{Op}}
\def\re{\mathbb{R}}
\def\pa{\partial}
\renewcommand{\Re}{\text{{\rm Re}\;}}
\renewcommand{\Im}{\text{{\rm Im}\;}}
\newcommand{\supp}{\text{{\rm supp}\;}}
\newtheorem{thm}{Theorem}[section]
\newtheorem{lem}[thm]{Lemma}
\newtheorem{prop}[thm]{Proposition}
\newtheorem{cor}[thm]{Corollary}
\theoremstyle{definition}
\newtheorem{defn}{Definition}
\newtheorem{ass}{Assumption}
\newtheorem{example}{Example}
\theoremstyle{remark}
\newtheorem{rem}[thm]{Remark}
\title[]%
{Local time decay for fractional Schr\"odinger operators with slowly decaying potentials and a weaker Agmon type estimate in a classically forbidden region}
\author{Kouichi Taira}
\address{Department of Mathematical Sciences, Ritsumeikan University, 1-1-1 NojiHigashi, Kusatsu, 525-8577 Japan}
\email{ktaira@fc.ritsumei.ac.jp}
\date{}
\begin{document}
\maketitle

\begin{abstract}

A local time decay estimate of fractional Schr\"odinger operators with slowly decaying positive potentials are studied. It is shown that its resolvent is smooth near zero and the time propagator has fast local time decay which is very different from very short-range cases.
The key element of the proof is to establish a weaker Agmon estimate for a classically forbidden region using exotic symbol calculus. As a byproduct, we prove that the Riesz operator is a pseudodifferential operator with an exotic symbol. 

\end{abstract}


\section{Introduction}

\subsection{Introduction}

We consider fractional Schr\"odinger operators on $\re^n$ with $n\geq 1$ such as
\begin{align}\label{fracex}
P=(-\Delta)^{m}+V(x),\quad \quad P=(-\Delta+1)^{m}-1+V(x)
\end{align}
or these generalizations, where a real-valued potential $V$ is slowly decreasing and positive repulsive: $V(x)\sim c|x|^{-\m}$ as $|x|\to \infty$ with $c>0$ and $0<\m<2m$ in the former case and $0<\m<2$ in the latter case. See Assumption \ref{potass} for the precise assumption. In this paper, we focus on resolvent estimates and local time decay estimates for fractional Schr\"odinger operators between weighted $L^2$-spaces.

Local time decay estimates of the propagator of a Schr\"odinger operator $-\Delta+V(x)$ with decaying potential $V$ has a long history initiated by Jensen-Kato \cite{JK}. This question is mostly related to low frequency regimes. Particles that have non-zero frequency escape to infinity and this contribution to the time decay is less essential unless you consider variable coefficient cases, where there are non-trivial effects from the Hamiltonian flow. Moreover, analysis of the low-frequency regimes strongly depends on the decay rate of the potential function $V$ at infinity. Namely, if we suppose that the decay of $V$ at infinity is like $\jap{x}^{-\m}=(1+|x|^2)^{-\frac{\m}{2}}$, then large time behavior of the Schr\"odinger propagator $e^{-it(-\Delta+V)}$ dramatically changes with the exponent $\m$:

\begin{itemize}
\item (Very short-range cases: $\m>2$) If $V$ decays fast enough and we assume that $-\Delta+V$ has no eigenvalues and no resonances at the zero energy, then the optimal decay rates of the propagator is $O(\jap{t}^{-\frac{n}{2}})$, that is $\|\chi e^{-it(-\Delta+V)}\chi\|_{L^2\to L^2}=O(\jap{t}^{-\frac{n}{2}})$ as $|t|\to \infty$ for $\chi\in C_c^{\infty}(\re^n)$ (see \cite{JK} for $n=3$, \cite{J1} for $n=4$, \cite{J2} for $n\geq 5$, and \cite{JN} for $n=1,2$). We also note that this decay rate coincides with the decay rate of the integral kernel for $V=0$, where we recall $e^{it\Delta}(x,y)=(4\pi it)^{-\frac{n}{2}}e^{-\frac{|x-y|^2}{4it}}$. For generic potentials, the contribution of quantum particles with low frequency on a region $\{|x|\lesssim \l^{\frac{1}{2}}\}$ is negligible due to the uncertainty principle. Such phenomena can be analyzed by Hardy's inequality or Nash's inequality (see \cite{BB,JK}). Its generalization to a certain class of $p_0(D_x)+V$ with a Morse function $p_0$ is studied in \cite{Mu}.

\item (Slowly decaying cases: $0<\m<2$) In this case, the time decay of the propagator strongly depends on the geometry of the potential function $V$. If $V$ is positive repulsive such as $V(x)=\jap{x}^{-\m}$, then we obtain a stronger time decay $\|\chi e^{-it(-\Delta+V)}\chi\|_{L^2\to L^2}=O(\jap{t}^{-\infty})$ as $|t|\to \infty$ for $\chi\in C_c^{\infty}(\re^n)$ (\cite{N2,Y1}). On the other hand, if  $V$ is negative attractive such as $V(x)=-\jap{x}^{-\m}$, then the possible decay rate is $O(\jap{t}^{-1})$ and it is optimal (\cite{FS,Y1}). Finally, we note that the slowly decaying cases cannot be dealt with a perturbation theory, unlike the very short-range cases. Therefore, the strategy used in \cite{JK} does not work there.

\item (Scaling critical case: $\m=2$) In \cite{W}, the time decay property for $-\Delta+\frac{q(x/|x|)}{|x|^2}$ on $\re^n$ (or more general conic manifolds) with $q\geq -\frac{(n-2)^2}{4}$ is studied. The decay rate for $e^{-it(-\Delta+\frac{q(x/|x|)}{|x|^2})}$ is like $O(\jap{t}^{-\n-1})$, where $\n=\sqrt{\frac{(n-2)^2}{4}+\l_0}$ and $\l_0$ is a smallest eigenvalue of $-\Delta_{\mathbb{S}^{n-1}}+q(\o)$ on the sphere. We observe that if the infimum of $q$ is larger and larger, then the decay rate $\jap{t}^{-\n-1}$ is better and better and getting close to the positive repulsive case for $0<\m<2$.  On the other hand, if we take $q(\o)=-\frac{(n-2)^2}{4}$, then the decay rate becomes $\jap{t}^{-1}$, which is similar to the negative attractive case for $0<\m<2$. 

\end{itemize}

These generalizations to Laplace-Beltrami operators on non-compact manifolds have been also investigated by many people in connection with Price's law in black-holes space-times (\cite{BH,BB,GHS,H}).

In the last couple of years, fractional operators such as \eqref{fracex} have been extensively studied in several contexts. For instance, see the time decay estimates for \cite{FSWY,FSY}, the Kato smoothing estimates and the Strichartz estimates \cite{Hos,MY0,MY}, the dispersive estimates \cite{EGT1,EGT2,GG1,GG2,GT}, the heat kernel estimates for \cite{BGJP,DDY,JW}, and absence of positive eigenvalues \cite{HS}. 

In the fractional cases, the value $\m$ at the borderline between the very short-range cases and the slowly decaying cases should change according to the scaling structure at zeros of the symbol: Such a value should be $\m=2m$ for $P=(-\Delta)^m+V(x)$ and $\m=2$ for $(-\Delta+1)^{m}-1$. 
Feng-Soffer-Wu-Yao \cite{FSWY} and Feng-Soffer-Yao \cite{FSY} studied the time decay estimates for the very short-range cases and showed that $\|\jap{x}^{-s} e^{-itP}\jap{x}^{-s'}\|_{L^2\to L^2}=O(\jap{t}^{-\frac{n}{2m}})$ for $P=(-\Delta)^m+V(x)$ and $s,s'>n/2$ when $V$ has sufficiently decay at infinity under the condition that $P$ has no zero eigenvalues and zero resonances. They also investigated how worse the time decay is if $P$ has zero eigenvalues or zero resonances.
The global Kato smoothing estimates and their application to the Stricharz estimates were studied by Mizutani-Yao in \cite{MY0} for the very short-range cases and in \cite{MY} for the scaling critical case.
However, as far as the author knows, previous researches about the low-frequency regimes mostly concern the very short-range cases or the scaling critical cases. There have been no studies of the fractional operators with slowly decreasing potentials so far.

The purpose of this paper is to initiate an analysis of the long-time behavior of fractional Schr\"odinger operators with slowly decreasing potentials such as $P=(1-\Delta)^m-1+V(x)$ and $P=(-\Delta)^m+V(x)$. Here we focus on the positive repulsive cases like $V(x)=\jap{x}^{-\m}$. 
The negative repulsive cases like $V(x)=-\jap{x}^{-\m}$ is also important, however, the method used in \cite{FS} for the usual Schr\"odinger operator $-\Delta+V(x)$ with negative attractive $V$ may be available even in the fractional case. On the other hand, there are some difficulties in applying the argument in \cite{N2} about the positive repulsive $P=-\Delta+V(x)$ in particular for an Agmon-type estimate in a classically forbidden region similar to \cite[Theorem 1.1]{N2}. See Subsection \ref{subsecKey} for more detail. The original motivation of this paper is to give another proof of it. Namely, the main contribution of this paper is to give a proof of an (weaker but sufficient for our application) Agmon-type estimate in a classically forbidden region for more general operators and give an application to local time decay estimates of these Schr\"odinger propagators.
We stress that the method in \cite{BCD} seems depend on the second-order structure of $-\Delta+V(x)$ and in fact, exponentially small estimates with respect to the energy parameter $\l$ like \cite[Theorem III-1]{BCD} and \cite[Theorem 1.1]{N2} seems not hold for operators such as $P=(-\Delta+1)^m-1+V(x)$ unless $m$ is an even integer since the Agmon-type exponential estimate should concern with analyticity of the principal symbol. On the other hand, the method used in this paper is to employ a pseudodifferential technique with some exotic symbol classes. The Agmon-type estimate follows from a disjoint support property and some mapping properties of the Riesz operator while we obtain fast polynomial decay in $\l$ only. For a more precise statement, see Theorem \ref{Agthm}.

As a byproduct, we prove that the Riesz operator $P^{-1}$ and its powers $P^{-N}$ are pseudodifferential operators with exotic symbols (described in Section \ref{subsectionslvame}) and hence the mapping property of the Riesz operator can be deduced from it, see Theorem \ref{Rieszthm}. This result seems new even for the usual Schr\"odinger operator $P=-\Delta+V(x)$. Moreover, we also point out that a similar result does not hold for the very short-range cases or even the simplest case $P=-\Delta$. The symbol of $(-\Delta)^{-N}$ is a singular function, does not satisfy symbolic estimates and the power is well-defined only for $N<n/2$.
 The analysis of the Riesz operator also plays an important role when applying the Mourre theory in Section \ref{sectionMourre}.
 
Finally, we briefly discuss about positive (embedded) eigenvalues. It is well-known that the usual Schr\"odinger operator $-\Delta+V$ has no positive eigenvalues under a mild condition on $V$ (either of $|V(x)|\lesssim \jap{x}^{-\m-1}$ or $|\pa_x^{\a}V(x)|\lesssim \jap{x}^{-\m-|\a|}$ with $\m>0$ is sufficient, see \cite{FHHH,RS}). On the other hand, Herbst-Skibsted (\cite[Subsection 1.2]{HS}) constructed a compactly supported real-valued potential $V$ such that $(-\Delta)^m+V(x)$ has a positive eigenvalue. Therefore, the problem of the existence of positive eigenvalues for the fractional cases is more severe than that for the usual Schr\"odinger operator. In this paper, we are no longer pursuing this problem (except for a few remarks given in Remark \ref{abevarem}) and assume that the operator has no positive eigenvalues.

\subsection{Main theorem}

\begin{ass}\label{symbolass}
Let $p_0\in C^{\infty}(\re^n;[0,\infty))$ satisfying the following two properties:

\noindent$(i)$ There exists $m>0$ and $k\in \mathbb{Z}_{>0}$ such that
\begin{align*}
&|\pa_{\x}^{\a}p_0(\x)|\leq C_{\a}\jap{\x}^{2m-|\a|},\,\, p_0(\x)\gtrsim \jap{\x}^{2m}\,\, \text{for}\,\, |\x|\geq 1,\quad p_0(\x)\sim |\x|^{2k}\,\, \text{for}\,\, |\x|\leq 1.
\end{align*}

\noindent$(ii)$There exists $c>0$ such that
\begin{align*}
\x\cdot \pa_{\x}p_0(\x)\geq cp_0(\x)\quad \text{for}\quad \x\in \re^n.
\end{align*}
\end{ass}

\begin{rem}
Under these assumptions, the range of $p_0$ is $[0,\infty)$ due to the intermediate value theorem. In particular, the spectrum of its Fourier multiplier $p_0(D_x)$ (as a self-adjoint operator with domain $H^{2m}(\re^n)$) is given by $\s(p_0(D_x))=[0,\infty)$.
\end{rem}

Next, we assume that a potential $V$ satisfies a positive repulsive assumption:

\begin{ass}\label{potass}
Let $0<\m<2k$.
Assume that a real-valued smooth function $V$ on $\re^n$ satisfies

\noindent $(i)$ $|\pa_x^{\a}V(x)|\leq C_{\a}\jap{x}^{-\m-|\a|}$

\noindent $(ii)$ There exists $C>0$ such that $V(x)\geq C_1\jap{x}^{-\m}$.

\noindent $(iii)$ There exist $C>0$ and $R_0>0$ such that $-x\cdot \pa_xV(x)\geq C_2\jap{x}^{-\m}$ for $|x|>R_0$.

\end{ass}

\begin{rem}

$(1)$
A typical example of the potential satisfying such conditions is $V(x)=c\jap{x}^{-\m}$ with $c>0$.

$(2)$ The third condition is only needed to prove the strict Mourre inequality \eqref{Mouinprop} for a rescaled operator.
\end{rem}

Now we set
\begin{align}\label{Pdef}
p(x,\x):=p_0(\x)+V(x),\quad P=p_0(D)+V(x),\quad  \quad \m':=\m/k\in (0,2).
\end{align}
Then, $P$ is a self-adjoint operator with the domain $H^{2m}(\re^n)$ by Kato-Rellich theorem. Since $p_0(\x),V(x)\geq 0$, the spectrum of $P$ is given by $\s(P)=[0,\infty)$ by Weyl's theorem.

\begin{example}

Typical examples we want to consider in this paper are the following:
\begin{itemize}
\item $P=(-\Delta)^m+V(x)$ with $m\in\mathbb{Z}_{\geq 1}$ and $0<\m<2m$,

\item $P=(1-\Delta)^m-1+V(x)$ with $m>0$ and $0<\m<2$;
\end{itemize}
where we assume that a smooth function $V$ on $\re^n$ satisfies the three conditions in Assumption \ref{potass}. The degeneracy index $2k$ at $\x=0$ is equal to $2m$ in the former case and $2$ in the latter case since the symbols $|\x|^{2m}$ and $(1+|\x|^2)^m-1$ have Taylor expansions like $|\x|^{2m}$ and $|\x|^2$ as $|\x|\to 0$ respectively.

\end{example}

Finally, we assume that $P$ has no embedded eigenvalues:

\begin{ass}\label{abevass}
$P$ has no eigenvalues.
\end{ass}

\begin{rem}\label{abevarem}

$(1)$
There are no eigenvalues at both the high-frequency and low-frequency regimes, that is there exist $0<\l_{\mathrm{low}}<\l_{\mathrm{high}}$ such that $P$ has no eigenvalues inside $[0,\l_{\mathrm{low}}]\cup [\l_{\mathrm{high}},\infty)$ due to our method. In fact, the strict Mourre inequality holds both on $(0,\l_{\mathrm{low}}]$ and $[\l_{\mathrm{high}},\infty)$. See \eqref{Mouinprop} and the proof of Theorem \ref{thmhighlap}. The zero eigenvalue is excluded by Lemma \ref{mapFredpow}. Hence Assumption \ref{abevass} just imposes that there are no eigenvalues in the middle region $(\l_{\mathrm{low}},\l_{\mathrm{high}})$.
We also note that Assumptions \ref{potass} $(ii), (iii)$ are not needed to exclude eigenvalues in $[\l_{\mathrm{high}},\infty)$.

$(2)$ If we further assume that $-x\cdot\pa_xV(x)\geq 0$ for all $x\in \re^n$, then Assumption \ref{abevass} is satisfied due to a Virial argument as is proved in Theorem \ref{Virthm}.

\end{rem}

We define the outgoing/incoming resolvent by
\begin{align*}
R_{\pm}(z):=\begin{cases}
(P-z)^{-1}\quad \text{for}  \quad z\in\overline{\mathbb{C}}_{\pm}\setminus [0,\infty)\\
(P-\l\mp i0)^{-1}\quad \text{for}  \quad z=\l\in (0,\infty)
\end{cases},
\end{align*}
where we note that $(P-\l\mp i0)^{-1}$ exists for $\l>0$ due to the usual Mourre theory (see Subseciton \ref{subsechigh}).

\begin{thm}\label{fracmainthm}
Let $P=p_0(D_x)+V(x)$ satisfy Assumptions \ref{symbolass}, \ref{potass} and \ref{abevass}.

\noindent$(i)$ $($Uniform estimates for powers of the resolvent$)$
Let $N\in\mathbb{Z}_{>0}$ and $\c>\max(N-1/2,N\left(\frac{1}{2}+\frac{2k-1}{4k}\m\right) )$. Then
\begin{align*}
\sup_{z\in\overline{\mathbb{C}}_{\pm},\,\,0<|z|\leq 1}\jap{z}^{\frac{N}{2}\left(2-\frac{1}{m}\right)}\|\jap{x}^{-\c}R_{\pm}(z)^N\jap{x}^{-\c}\|<\infty.
\end{align*}
Moreover, the map $\overline{\mathbb{C}}_{\pm}\setminus \{0\}\ni z\mapsto \jap{x}^{-\c}R_{\pm}(z)^N\jap{x}^{-\c}\in B(L^2)$ is H\"older continuous. In particular, $\jap{x}^{-\c}R_{\pm}(0)^N\jap{x}^{-\c}=\lim_{\e\searrow 0}\jap{x}^{-\c}R_{\pm}(\e)^N\jap{x}^{-\c}$ exists in $B(L^2)$. Moreover, $R_{+}^N(0)=R_{-}^N(0)$ holds.

\noindent$(ii)$ $($Local time decay of the propagator$)$ For $s>s'>0$, there exists $a(s,s')>0$ such that
\begin{align*}
\|\jap{x}^{-s}e^{-itP}\jap{x}^{-s}\|\lesssim& \jap{t}^{-s'}\quad \text{when}\quad m>\frac{1}{2},\\
\|\jap{x}^{-s}\jap{P}^{-a(s,s')}e^{-itP}\jap{x}^{-s}\|\lesssim& \jap{t}^{-s'}\quad \text{when}\quad m\leq \frac{1}{2}
\end{align*}
In particular, when $m>\frac{1}{2}$, then $\|\chi e^{-itP}\chi\|=O(\jap{t}^{-\infty})$ as $|t|\to \infty$ for $\chi\in C_c^{\infty}(\re^n)$. 

\end{thm}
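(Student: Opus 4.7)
The overall strategy is Kato--Jensen in spirit: I would first upgrade Theorem \ref{Agthm} and Theorem \ref{Rieszthm} into a uniform limiting absorption principle for the $N$-th power resolvent $R_\pm(z)^N$ that includes the endpoint $z=0$, then combine it with Stone's formula and repeated integration by parts to turn regularity of the spectral density into polynomial $t$-decay.

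For part (i), I would split $z$ into three regimes. On the middle range $z\in(\lambda_{\mathrm{low}},\lambda_{\mathrm{high}})$, where Assumption \ref{abevass} excludes embedded eigenvalues, the LAP with weight $\c>N-1/2$ is the standard consequence of Mourre theory with the conjugate $A=(x\cdot D_x+D_x\cdot x)/2$. On the high-energy range $|z|\ge\lambda_{\mathrm{high}}$, the positivity $(\x\cdot\pa_\x p_0)(\x)\ge c\,p_0(\x)$ from Assumption \ref{symbolass}(ii) produces a strict Mourre inequality; running Mourre with the effective semiclassical parameter $h=\jap{z}^{-1/(2m)}$ gives $\|\jap{x}^{-\c}R_\pm(z)^N\jap{x}^{-\c}\|\lesssim \jap{z}^{-N(2-1/m)/2}$ for any $\c>N-\tfrac12$. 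On the low-energy range $0<|z|\le \lambda_{\mathrm{low}}$, I rescale $x=\lambda^{-1/\mu}y$ so that the classically forbidden region collapses to a bounded ball in $y$; Assumption \ref{potass}(iii) then yields a rescaled strict Mourre inequality on the allowed side and Theorem \ref{Agthm} controls the forbidden side. Undoing the rescaling transfers $\lambda$-powers onto the weights and produces the second threshold $\c>N(1/2+(2k-1)\mu/(4k))$. The extension down to $z=0$ is done by iterating $R_\pm(z)=P^{-1}(I+zR_\pm(z))$ $N$ times and invoking the exotic pseudodifferential mapping of $P^{-N}$ from Theorem \ref{Rieszthm}; H\"older continuity follows from Mourre interpolation with fractional powers of $A$, and the equality $R_+(0)^N=R_-(0)^N$ holds since both one-sided limits coincide with the self-adjoint operator $P^{-N}$ (the zero energy is not an eigenvalue by Lemma \ref{mapFredpow}).

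For part (ii), Stone's formula together with the absence of eigenvalues gives
\begin{equation*}
\jap{x}^{-s}e^{-itP}\jap{x}^{-s}=\frac{1}{2\pi i}\int_0^\infty e^{-it\lambda}\,\jap{x}^{-s}\bigl[R_+(\lambda)-R_-(\lambda)\bigr]\jap{x}^{-s}\,d\lambda.
\end{equation*}
Choosing an integer $N>s'$ and integrating by parts $N$ times with $\pa_\lambda^{N-1}R_\pm(\lambda)=(N-1)!\,R_\pm(\lambda)^N$, one finds that every boundary term at $\lambda=0$ cancels thanks to $R_+(0)^k=R_-(0)^k$ ($1\le k\le N$) from part (i). The remaining integrand is $O(\jap{\lambda}^{-N(2-1/m)/2})$; for $m>1/2$ this is integrable at infinity once $N$ is large enough, producing a $\jap{t}^{-N}$ bound and, by interpolation, every exponent $s'<s$. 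For $m\le 1/2$ the factor $\jap{\lambda}^{-N(2-1/m)/2}$ fails to decay, so one first inserts $\jap{P}^{-a(s,s')}$ with $a(s,s')>0$ chosen so that $\jap{\lambda}^{-a(s,s')-N(2-1/m)/2}$ is integrable; this is the source of the extra regularization in the statement for small $m$.

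The main obstacle is the low-energy LAP. In the second-order case of \cite{N2} one invokes an exponential Agmon estimate that rests on analyticity of the principal symbol, which is unavailable for a generic fractional $p_0$. One must therefore work with the weaker polynomial $\lambda$-decay supplied by Theorem \ref{Agthm}, and the bulk of the technical work consists in pushing this polynomial estimate through the rescaling $x=\lambda^{-1/\mu}y$ while keeping every error term inside the exotic symbol classes used for $P^{-N}$. The precise bookkeeping of $\lambda$-powers under that rescaling is exactly what produces the second weight threshold $N(1/2+(2k-1)\mu/(4k))$. A secondary difficulty is proving H\"older regularity of the $N$-th power resolvent up to $z=0$, which requires fractional Mourre estimates compatible with the rescaling and cannot be read off from the classical Jensen--Kato arguments.
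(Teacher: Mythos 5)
Your proposal reproduces the paper's architecture faithfully: rescaling $P$ to $P_h$ with $h=\lambda^{1/\mu-1/(2k)}$, the semiclassical Mourre inequality (H5:1) using Assumptions \ref{symbolass}$(ii)$, \ref{potass}$(iii)$, control of the forbidden region via Theorem \ref{Agthm}, and the change of weight from $\jap{A_h}$ to $\jap{x}$; this is exactly Theorem \ref{scalelap} and Lemma \ref{scaleretest}. The high-frequency estimate via semiclassical Mourre with $h\sim |z|^{-1/(2m)}$ and the Stone-formula plus integration-by-parts argument in part $(ii)$ also match Theorem \ref{thmhighlap} and Subsection \ref{subsectimedecay}. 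The one place you diverge is the passage to the endpoint $z=0$ and the H\"older continuity there. You propose to iterate $R_\pm(z)=P^{-1}(I+zR_\pm(z))$ and identify the limit with $P^{-N}$, then obtain H\"older continuity by ``Mourre interpolation with fractional powers of $A$.'' The paper instead works at the level of the spectral density: Lemma \ref{scaleretest} is used to prove (Corollary \ref{spmesest}) that $\jap{x}^{-\c}\frac{d^{N-1}}{d\l^{N-1}}E'(\l)\jap{x}^{-\c}$ is $O(\l^{\a(N,\c)})$ with $\a(N,\c)>0$ (hence vanishes at $\l=0$, giving $R_+^N(0)=R_-^N(0)$), and is H\"older continuous in $\l$ by complex interpolation between integer powers; Theorem \ref{genmainthm} then follows by expressing $R_\pm(z)^N1_{[0,1]}(P)$ as a Cauchy integral of $\frac{d^{N-1}}{d\l^{N-1}}E'(\l)$ and invoking the classical H\"older bound for the Cauchy operator. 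Your iterated-resolvent route is plausible in spirit, but be aware that Mourre theory alone does not give H\"older continuity uniformly down to $z=0$ (the Mourre constant degenerates there), and identifying the limits with $P^{-N}$ on weighted $L^2$ needs a separate argument; the paper sidesteps both by working with the spectral density and the explicit polynomial bound in $\l$, which is also where the second weight threshold $\c>N(1/2+(2k-1)\mu/(4k))$ is actually produced. Finally, note that you do not need a separate middle regime $(\lambda_{\mathrm{low}},\lambda_{\mathrm{high}})$: the paper folds it into the high-frequency theorem via the ordinary (non-semiclassical) Mourre theory plus Assumption \ref{abevass}.
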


\begin{rem}

$(1)$
The regularity loss $\jap{P}^{-a(s,s')}$ for $m\leq 1/2$ is more or less needed but we do not pursue the optimal value of $a(s,s')$. On the other hand, we can add some positive weight $\jap{P}^{a(s,s')}$ for $m>1/2$.

$(2)$ Here we consider smooth potentials only. A perturbation argument used in \cite[\S 5.2]{FS} may deal with compactly supported singularities.

$(3)$ By mimicking the proof of the part $(ii)$, we can also prove local time decay estimates for the wave/Klein-Gordon propagators. 

\end{rem}

The proof of the part $(i)$ is divided into two parts: the low-frequency estimates (Theorem \ref{genmainthm}) and the high-frequency estimates (Theorem \ref{thmhighlap}). The proof of the part $(ii)$ is given in Subsection \ref{subsectimedecay}.

Throughout of this paper, we assume Assumptions \ref{symbolass}, \ref{potass} and \ref{abevass}.

\subsection{Limiting absorption principle and time decay estimates}

Our method to obtain the local time decay estimates relies on an analysis of the resolvent of our operator $P$. Here we briefly describe the relationship between the limiting absorption principle (the resolvent estimates) and time decay estimates. For a more rigorous argument, see Subsection \ref{subsectimedecay}.

For simplicity, we assume that an operator $P$ is non-negative self-adjoint operator on a Hilbert space $\mathcal{H}$ and its spectrum is purely absolutely continuous. Then the spectrum theorem shows that 
\begin{align*}
e^{-itP}=\int_0^{\infty}e^{-it\l} dE(\l)=\int_0^{\infty}e^{-it\l} E'(\l) d\l,
\end{align*}
where $E(\l)$ is the spectral projection of $P$ and we use Stone's theorem:
\begin{align*}
E'(\l)=\frac{1}{2\pi i}\lim_{\e\searrow 0}\left((P-\l-i\e)^{-1}-(P-\l+i\e)^{-1} \right).
\end{align*}
We further assume that with some weight $W$,  $(i)$ $WE'(\l)W\in C^{\infty}((0,\infty);B(\mathcal{H}))$, $(ii)$ $WE'(\l)W$ is symbolic at infinity in the sense that $\|W\frac{d^j}{d\l^j}E'(\l)W\|\lesssim \l^{k-\frac{j}{2}}$ for $\l\gtrsim 1$ with some fixed $k\in \re$, $(iii-1)$ 
$WE'(\l)W\in C^{\lfloor\frac{n}{2}\rfloor-1}([0,1);B(\mathcal{H}))$ and $\|W\frac{d^j}{d\l^j}E'(\l)W\|\lesssim \l^{\frac{n}{2}-1-j}$. These conditions are guaranteed for $P=-\Delta$ with $W=\chi(x)\in C_c^{\infty}(\re^n)$ (see \cite[Theorem 1.2]{BB}). Then integration by parts (see \cite[Section 5]{BB}) yields the high-frequency contribution of $We^{-itP}W$ is $O(\jap{t}^{-\infty})$ as $|t| \to \infty$ by $(i),(ii)$ and the low-frequency contribution of $We^{-itP}W$ is just $O(\jap{t}^{-\frac{n}{2}})$ by $(iii)$, where the boundary values at $\l=0$ vanish due to the assumptions $\frac{d^j}{d\l^j}E'(0)=0$ for $j\leq \lfloor\frac{n}{2}\rfloor-1$. In the case of $P=-\Delta+V(x)$ with positive repulsive $V$, Nakamura \cite{N2} showed that the assumption $(iii-1)$ can be improved as $(iii-2)$ $WE'(\l)W\in C^{\infty}([0,1);B(\mathcal{H}))$ and $\frac{d^j}{d\l^j}E'(0)=0$ for each $j=1,2,\cdots$, which implies $\|We^{-itP}W\|=O(\jap{t}^{-\infty})$ as $|t|\to \infty$. This is the reason why we obtain an improved time decay there.

Now our task is to prove an analog of $(i)$, $(ii)$, and $(iii-2)$ even for fractional cases with positive repulsive potentials. The conditions $(i),(ii)$ are relatively easy to check due to a variant of the Mourre theory (see Subsection \ref{subsechigh}) and Assumption \ref{abevass}. Therefore, the bulk of this paper is to show $(iii-2)$.

\subsection{Key estimates}\label{subsecKey}

The key estimate to prove $(iii-2)$ is the following weaker Agmon type estimate in a classically forbidden region.

\begin{thm}\label{Agthm}
Let $L\in \re$ and $\g\in C_c^{\infty}(\re)$. Then there exist $c_0>0$ and $0<\l_0\leq 1$ such that
\begin{align*}
\left\|\jap{x}^L\g(P/\l) 1_{\{|x|\leq c\l^{-1/\m}\}}(x)\right\|=O(\l^{\infty})\quad \text{as}\quad \l\to 0
\end{align*}
for $0<c\leq c_0$.

\end{thm}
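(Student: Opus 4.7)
My plan is to prove the stronger unweighted bound $\|\g(P/\l)1_B\|_{L^2\to L^2}=O(\l^\infty)$ by exhibiting $\g(P/\l)$ as an exotic pseudodifferential operator whose leading symbol vanishes on a neighborhood of $B=\{|x|\le c\l^{-1/\m}\}$. The weighted estimate will then reduce to this: writing $\jap{x}^L\g(P/\l)1_B=[\jap{x}^L,\g(P/\l)]1_B+\g(P/\l)\jap{x}^L 1_B$, the multiplier $\jap{x}^L 1_B$ is supported on $B$ where $\jap{x}^L\le C\l^{-L/\m}$, so
\[
\|\g(P/\l)\jap{x}^L 1_B\|\le C\l^{-L/\m}\|\g(P/\l)1_B\|=O(\l^\infty),
\]
and the commutator is handled by expanding $[\jap{x}^L,(P-\l z)^{-1}]=-(P-\l z)^{-1}[\jap{x}^L,p_0(D)](P-\l z)^{-1}$ inside the Helffer--Sj\"ostrand formula and applying the same disjoint-support machinery.

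For the unweighted bound, via an almost-analytic extension $\tilde\g\in C_c^\infty(\co)$ of $\g$,
\[
\g(P/\l)=-\frac{\l}{\pi}\int_\co \bar\pa\tilde\g(z)\,(P-\l z)^{-1}\,dL(z),
\]
so the core task is an analysis of $(P-\l z)^{-1}$ uniformly for $z\in\supp\tilde\g$. The structural point is that on $B$, Assumption \ref{potass}(ii) gives $V(x)\ge C_1c^{-\m}\l$, which for $c$ sufficiently small dominates $|\l z|$, so $V-\l z\gtrsim\l$ on a neighborhood of $B$. I would work in an exotic pseudodifferential calculus adapted to the two intrinsic scales $x\sim\l^{-1/\m}$ and $\x\sim\l^{1/(2k)}$, in which each $\pa_x$ gains a factor $\l^{1/\m}$ and each $\pa_\x$ loses a factor $\l^{-1/(2k)}$. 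The hypothesis $\m<2k$ in Assumption \ref{potass} is precisely what makes each commutator improve by the positive power $\l^{1/\m-1/(2k)}$, giving a genuine composition/asymptotic calculus with $O(\l^\infty)$ remainders -- the same calculus the paper uses to prove that $P^{-N}$ is an exotic $\OPS$ (Theorem \ref{Rieszthm}). Constructing a parametrix for $P-\l z$ in this class and feeding it into the Helffer--Sj\"ostrand integral expresses $\g(P/\l)=\Op(a_\l)+E_\l$, where the leading part of $a_\l$ is $\g(p(x,\x)/\l)$, supported in $\{V(x)\lesssim\l\}=\{|x|\gtrsim\l^{-1/\m}\}$ and hence disjoint from $B$ for small $c$, and $E_\l$ is $O(\l^\infty)$ in operator norm. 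Composing $\Op(a_\l)$ with $1_B$ (after mollifying the cutoff) then gives $O(\l^\infty)$: every term in the asymptotic composition formula involves a product $\pa_\x^\a a_\l\cdot\pa_x^\a \chi_B$ with disjoint supports, and the calculus remainder is $O(\l^\infty)$ by construction.

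The main obstacle is the rigorous setup of this exotic calculus with uniform control in $\l$ and the parametrix construction for $P-\l z$. Since $p_0$ vanishes to order $2k$ at $\x=0$, the kinetic part contributes no low-frequency ellipticity, so inversion of $P-\l z$ on the relevant region rests entirely on $V-\l z\sim\l$; this is a degenerate elliptic setup requiring a Weyl--H\"ormander class whose metric encodes both of the scales above, and the quantitative verification of symbolic composition, leading-symbol inversion, and $\l^N$-decay of remainders in the appropriate weighted sense is the technical heart of the argument. If a direct parametrix is awkward, an equivalent pedestrian alternative is to iterate the resolvent identity
\[
(P-\l z)^{-1}=(V-\l z)^{-1}-(V-\l z)^{-1}p_0(D)(P-\l z)^{-1}
\]
under a nested family of smooth cutoffs inside $B$, gaining a factor $\l^{1/\m-1/(2k)}$ per step and reaching $O(\l^N)$ for any $N$ after finitely many iterations, which together with the elementary commutator reduction above yields the stated bound.
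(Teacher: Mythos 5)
Your structural intuition is right, and the key ideas you name are indeed the paper's: an exotic calculus adapted to the slow decay, the disjoint support of $\g(p/\l)$ from $\{|x|\le c\l^{-1/\m}\}$, and the role of $\m<2k$. But the execution has a genuine gap near $x=0$, and this is exactly why the paper cannot run the argument you sketch on all of $B=\{|x|\le c\l^{-1/\m}\}$ in one stroke. The calculus you describe, where each $\pa_x$ uniformly gains $\l^{1/\m}$ and each $\pa_\x$ uniformly loses $\l^{-1/(2k)}$, is the fixed-scale semiclassical calculus at the critical point $(x,\x)\sim(\l^{-1/\m},\l^{1/(2k)})$; this is precisely the calculus the paper discards (Subsection 1.4), because after the scaling $D_\l$ the rescaled potential $V_h$ is singular of size $\l^{-1}$ in a shrinking ball around the origin and hence not a uniform symbol. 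The paper's metric $g=\jap{x}^{-2}dx^2+\jap{x}^{\m'}\jap{\x}^{-2}d\x^2$ instead gives a \emph{variable} gain $\s_g(x,\x)=\jap{x}^{\m'/2-1}\jap{\x}^{-1}$: this equals $\l^{1/\m-1/(2k)}$ at the critical scales, but is $\sim 1$ at $x\sim 0$, $\x\sim 0$. Consequently the parametrix remainder $R_{\l,z}$ is small only in the sense of lying in $S^{-N,-N}(g)$ (polynomial decay in $x$ and $\x$), not in the sense of carrying a power of $\l$; there is no $O(\l^\infty)$ coming out of the asymptotic calculus near the origin.

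This is why the paper must split $B$ into two pieces. On $\{\l^{-\e}\le|x|\le c\l^{-1/\m}\}$ (with a fixed $0<\e<1/\m$) it uses exactly the parametrix-plus-disjoint-support argument you outline, and the crucial remainder bound \eqref{xlowbduse} reads $\|\jap{x}^L R_{\l,z}1_\l\|\lesssim\sup_{|x|\ge\l^{-\e}}\jap{x}^{L-N}=O(\l^{\e(N-L)})$ -- a gain that disappears as $\e\to0$ and is simply unavailable near $x=0$. On $\{|x|\le\l^{-\e}\}$ the paper does something entirely different: it invokes the Riesz-operator mapping property from Theorem~\ref{Rieszthm}~$(iii)$, namely $\jap{x}^L (P+\l)^{-N}\jap{x}^{-N\m-L}\in B(L^2)$ uniformly, and then harvests the power $\l^{N(1-\m\e)}$ from factoring $(P/\l+1)^{-N}=\l^N(P+\l)^{-N}$ against the weight $\jap{\l^\e x}^{-M}$. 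Your proposal is missing this second argument; without it, the claimed $O(\l^\infty)$ for $\|\g(P/\l)1_B\|$ does not follow. Similar remarks apply to your ``pedestrian'' resolvent-identity iteration: the bound $(V-\l z)^{-1}\sim\l^{-1}$ and the commutator gain $\l^{1/\m-1/(2k)}$ hold only in the annulus $|x|\sim\l^{-1/\m}$ with $|\x|\sim\l^{1/(2k)}$, and the resolvent identity does not by itself supply the frequency localization. Finally, the reduction of the weighted to the unweighted estimate via the commutator expansion is defensible but adds work rather than removing it; the paper instead proves the weighted bound directly, using Lemma~\ref{Resxpres} to control the conjugated resolvent $\jap{x}^L(P/\l-z)^{-1}\jap{x}^{-L}$.
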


The proof of Theorem \ref{Agthm} is given in Section \ref{SectionAg}. This theorem can be interpreted intuitively: No classical particles live in the region $\{(x,\x)\in \re^{2n}\mid  (x,\x)\in \supp\g(p/\l) 1_{\{|x|\leq c\l^{-1/\m}\}}\}$, that is $\supp\g(p/\l) 1_{\{|x|\leq c\l^{-1/\m}}=\emptyset$ for sufficiently small $c>0$ and $0<\l\ll 1$.
This explanation seems applicable even in the case of the very short-range cases $\m>2k$, however, it is not due to the uncertainty principle. The point here is that $h:=\l^{\frac{1}{\m}-\frac{1}{2k}}$ plays the role of the Plank constant after an appropriate scaling, see \cite{N2} or Section \ref{subsecresLAP}. Therefore, to analyze the low-frequency behavior of the resolvent, we only need to deal with the region where both $p(x,\x)\sim\l$ and $|x|\gtrsim \l^{-1/\m}$ holds. In this region, a kind of semiclassical non-trapping estimates hold due to the semiclassical Mourre theory and Virial type conditions at infinity (Assumption \ref{symbolass} $(ii)$ and Assumption \ref{potass} $(iii)$). These are the idea of the proof of $(iii-2)$.

Nakamura \cite[Theorem 1.1]{N2} (for $L=0$, see \cite[Lemma 2.2]{N2} for general $L\in \re$) showed an exponential bound $\left\|\jap{x}^L\g(P/\l) 1_{\{|x|\leq c\l^{-1/\m}\}}  (x)\right\|=O(e^{-c\l^{-(1/\m-1/2)}})$ for $P=-\Delta+V(x)$ with positive repulsive $V$ essentially due to the work in \cite{BCD}. While our result is much weaker than Nakamura's, it covers many other fractional operators and is sufficient to obtain the time decay estimates given in Theorem \ref{fracmainthm} $(ii)$.

We also remark that our operator is deformed to be a semiclassical operator $P_h$ as in \eqref{P_hdef} after a scaling. It is natural to consider that Theorem \ref{Agthm} can be proved by the disjoint support property of $P_h$ like \cite[Theorem 4.25]{Z}. However, the potential part $V_h$ is very singular with respect to the semiclassical parameter $h>0$. Moreover, the principal part of $P_h$ does not have a nice expression if $p_0$ is not a homogeneous function such as $p_0(\x)=(|\x|^2+1)^m-1$. Due to these two reasons, the usual semiclassical method in \cite{Z} cannot be applied at least directly. In this paper, we introduce an exotic symbol class defined by a slowly varying metric \eqref{Riemetdef} and study the pseudodifferential properties of $P$ (not $P_h$) in this symbol class.

\subsection{Related problems}

We have some problems naturally arising from the work in this paper.

\begin{itemize}

\item Prove some local time decay estimates for non-smooth symbols such as $p_0(\x)=|\x|^{2m}$ with $m\in (0,\infty)\setminus \mathbb{Z}_{\geq 1}$. Since our method here heavily depends on the pseudodifferential technique (albeit might be available if $m$ is sufficiently large), this problem is not straightforward at all. Moreover, estimates in a classically forbidden region like Theorem \ref{Agthm} should be weakened due to the singularity of the symbol $|\x|^{2m}$ at $\x=0$.

\item Prove the Strichartz estimates for fractional operators with positive repulsive potentials. The result for the usual Schr\"odinger operator $-\Delta+V(x)$ is given in \cite{M}, see also \cite{T} for the result with a singular potential in dimension two. Very recently, the dispersive estimate for radially symmetric initial data was proved \cite{BTVZ} when the dimension is three.

\item Prove time decay estimates for negative attractive potentials. As is mentioned before, the method for $-\Delta+V(x)$ in \cite{FS} may be available even in the fractional cases. 

\item Study variable coefficient cases such as $-\Delta_g+V(x)$ with a Riemannain metric $g$. As far as the authors know, even second order cases have not been studied so far.

\item Refinement of Theorem \ref{Agthm} assuming analyticity of $p_0$ like \cite[Theorem 1.1]{N2}:. If you assume that $p_0$ is analytic (or more strongly $p_0$ is a polynomial), can you obtain a similar exponential (Agmon) estimate? Moreover, what about its optimality? Perhaps FBI transform method (\cite{Ma,N0}) or commutator arguments (\cite[\S 7.1,\S 7.2]{Z} or \cite{HS}) (with the symbolic estimates used in this paper) are available. The author also anticipates that Theorem \ref{Agthm} is almost optimal if we only assume the smoothness assumption of $p_0$ instead of its analyticity.

\item Equivalence of Sobolev spaces: Which values of $q\in [1,\infty]$ and $s\in \re$ does the equivalence $\|p_0(D_x)^{s}\|_{L^q(\re^n)}\sim \|P^{s}\|_{L^q(\re^n)}$ hold? Such equivalence plays an important role in nonlinear theory. Dinh (\cite[Proposition 2.13]{D}) studied it for $p_0(D_x)=-\Delta$ by using the Gaussian upper bound of the heat kernel. In our case, $p_0(D_x)^{s}P^{-s}$ for $s\in \mathbb{Z}_{\geq 0}$ belongs to the symbol class $S(1,g)$, where $g$ is introduced in Subsection \ref{subsectionslvame}. Therefore, to find $q$ such that the bound $\|p_0(D_x)^{s}\|_{L^q(\re^n)}\lesssim \|P^{s}\|_{L^q(\re^n)}$ holds, we only need to study $L^q$-boundedness of pseudodifferential operators with the symbol class $S(1,g)$.
 See \cite{FMS,Me,KMVZZ} for scaling critical cases.

\end{itemize}







\subsection{Notation}

For an integer $a$, we write $\mathbb{Z}_{\geq a}=\{a,a+1,a+2,\hdots\}$, $\mathbb{C}_{\pm}=\{z\in\mathbb{C}\mid \pm \Im z> 0\}$.
We write $D_{x_j}=i^{-1}\pa_{x_j}$. $\mathcal{S}(\re^n)$ denotes the set of all rapidly decreasing functions on $\re^n$ and $\mathcal{S}'(\re^n)$ denotes the set of all tempered distributions on $\re^n$. We use the weighted Sobolev space: $L^{2,\ell}=\jap{x}^{-\ell}L^2(\re^n)$ and $H^{s,t}=\jap{x}^{-t}\jap{D}^{-s}L^2(\re^n)$ for $s,t\in \re$. For Banach spaces $X,Y$, $B(X,Y)$ denotes the set of all linear bounded operators from $X$ to $Y$. For a Banach space $X$, we denote the norm of $X$ by $\|\cdot\|_{X}$. If $X$ is a Hilbert space, we write the inner metric of $X$ by $(\cdot, \cdot)_{X}$, where $(\cdot, \cdot)_{X}$ is linear with respect to the right variable. We also denote $\|\cdot\|_{L^2}=\|\cdot\|$ and $(\cdot,\cdot)=(\cdot, \cdot)_{L^2}$. The Japanese bracket $\jap{x}$ stands for $(1+|x|^2)^{\frac{1}{2}}$. $1_{\O}$ denotes the characteristic function of a subset $\O$. We write $\mathrm{ad}_TS=[T,S]=TS-ST$ for linear operators $T,S$. For a positive constant $A,B$, $A\lesssim B$ means that there exists a non-essential constant $C>0$ such that $A\leq CB$ and $A\sim B$ means that both $A\lesssim B$ and $B\lesssim A$ hold.

\subsection*{Acknowledgment}
The author was supported by JSPS KAKENHI Grant Number 23K13004.


\section{Preliminary}

\subsection{Scattering symbol class}\label{subsecscsym}

For $a\in \mathcal{S}'(\re^n)$, we define the Weyl quantization $\Op(a)$ of $a$ by
\begin{align*}
\Op(a)u(x):=\frac{1}{(2\pi )^n}\int_{\re^n}\int_{\re^n}e^{i(x-y)\cdot \x}a\left(\frac{x+y}{2},\x\right)u(y)dyd\x.
\end{align*}
For a subset $S\subset\mathcal{S}'(\re^n)$, we write $\Op S:=\{\Op(a)\mid a\in S\}$. See \cite[\S18.5]{Ho} or \cite[\S 4]{Z}.

Let $k,\ell\in \re$. We define scattering symbol classes $S^{k,\ell}$ as follows: For $a\in C^{\infty}(\re^{2n})$,
\begin{align*}
a\in S^{k,\ell} \Leftrightarrow \sup_{(x,\x)\in \re^{2n}}\jap{x}^{-\ell+|\a|}\jap{\x}^{-k+|\b|}|\pa_x^{\a}\pa_{\x}^{\b}a(x,\x)|<\infty\quad \text{for all}\,\, \a,\b\in\mathbb{Z}_{\geq 0}^n.
\end{align*}
If we define a Riemmanan metric $g_{\mathrm{sc}}$ on $\re^{2n}$ by $g_{\mathrm{sc}}=\jap{x}^{-2}dx^2+\jap{\x}^{-2}d\x^2$, then $S^{k,\ell}=S(\jap{x}^{\ell}\jap{\x}^k,g_{\mathrm{sc}})$ holds in the sense of \cite[Definition 18.4.2]{Ho}. Moreover, we define the residual symbol class $S^{-\infty,-\infty}:=\cap_{k,\ell\in \re}S^{k,\ell}$. For $a_j\in S^{k_j,\ell_j}$, there exists $b\in S^{k_1+k_2,\ell_1+\ell_2}$ such that $\Op(b)=\Op(a_1)\Op(a_2)$. We write it as $b=a_1\# a_2$. The symbol of these commutator has a better decay both in $x$ and $\x$ than that of these composition, that is $[\Op(a_1),\Op(a_2)]\in \Op S^{k_1+k_2-1,\ell_1+\ell_2-1}$.
For $s,t\in \re$ and $a\in S^{k,\ell}$, we have $\Op(a)\in B(H^{s+k,t+\ell},H^{s,t})$. For these proof, see \cite[\S18.5]{Ho}. Actually, similar proof to the usual Kohn-Nirenberg symbol class $S^k$ still works.

\subsection{A slowly varying metric}\label{subsectionslvame}

To capture the low-frequency behavior of $P$, we need a more elaborate symbol class. The notation here partially follows the ones in \cite[\S 4.2]{FS} and \cite[\S18.4, \S18.5]{H}.
We define a Riemannian metric $g$ on $\re^{2n}=\re^n_x\times \re^n_{\x}$ by
\begin{align}\label{Riemetdef}
g_{(x,\x)}=\frac{dx^2}{\jap{x}^2}+\jap{x}^{\m'}\frac{d\x^2}{\jap{\x}^2},
\end{align}
where we recall $\m'=\m/k\in (0,2)$ defined in \eqref{Pdef}. We consider a $2n\times 2n$-matrix $A$ defined by
\begin{align*}
A=\begin{pmatrix}
0&-I\\
I&0
\end{pmatrix},
\end{align*}
where $I$ is the $n\times n$ identity matrix.

\begin{prop}\label{propslowvary}

\noindent$(i)$ $g$ is slowly varying, that is there exists $c>0$ and $C\geq 1$ such that $g_{\rho}(\rho')\leq c$ implies
\begin{align*}
C^{-1}g_{\rho}(\t')\leq g_{\rho+\rho'}(\t')\leq Cg_{\rho}(\t'),\quad \t'\in \re^{2n}.
\end{align*}

\noindent$(ii)$ $g$ is $A$-temperate in the sense that there exists $C,N>0$ such that
\begin{align*}
g_{\rho'}(\t)\leq C(1+g_{\rho'}^A(\rho-\rho'))^Ng_{\rho}(\t)
\end{align*}\label{gAdef}
for all $\rho,\rho',\t\in \re^{2n}$, where we define
\begin{align}
g^A_{\rho'}(\rho):=\sup\{|(\rho,\t)|^2\mid g_{\rho'}(A\rho)<1 \}=\sup_{\t\in \re^{2n}\setminus \{0\}}\frac{|(\rho,\t)|^2}{g_{\rho'}(A\t)}.
\end{align}

\noindent$(iii)$ We define $\s_{g}$ by
\begin{align}\label{sigmagdef}
\s_{g}(\rho)^2:=\sup_{\rho'\in \re^{2n}\setminus \{0\}}\frac{g_{\rho}(\rho')}{g_{\rho}^A(\rho')}.
\end{align}
Then we have $\s_g(x,\x)=\jap{x}^{\frac{\m'}{2}-1}\jap{\x}^{-1}$ and hence $\s_g(x,\x)\leq 1$.

\end{prop}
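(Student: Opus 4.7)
The plan is to verify the three assertions by direct computations exploiting the block-diagonal structure of $g$ in the coordinates $(x,\x)$; the exponent condition $\m'<2$ (coming from the slowly decaying hypothesis $\m<2k$) will play a key role in (ii) and (iii).

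For (i), writing $\rho=(x,\x)$ and $\rho'=(y,\y)$, the assumption $g_\rho(\rho')\leq c$ reads $|y|^2\leq c\jap{x}^2$ and $|\y|^2\leq c\jap{x}^{-\m'}\jap{\x}^2\leq c\jap{\x}^2$, the second inequality using $\jap{x}\geq 1$ and $\m'>0$. Choosing $c$ sufficiently small then forces $\jap{x+y}\sim\jap{x}$ and $\jap{\x+\y}\sim\jap{\x}$, and substituting these into the definition of $g_{\rho+\rho'}$ yields $g_{\rho+\rho'}(\t')\sim g_\rho(\t')$ for every $\t'\in\re^{2n}$.

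For (iii), since $A\t=(-\t_\x,\t_x)$, the quadratic form $\t\mapsto g_\rho(A\t)$ has diagonal matrix $\mathrm{diag}(\jap{x}^{\m'}\jap{\x}^{-2}I_n,\jap{x}^{-2}I_n)$, so dualizing gives
\begin{align*}
g_\rho^A(\rho')=\jap{\x}^2\jap{x}^{-\m'}|\rho'_x|^2+\jap{x}^2|\rho'_\x|^2.
\end{align*}
A term-by-term comparison with $g_\rho(\rho')=\jap{x}^{-2}|\rho'_x|^2+\jap{x}^{\m'}\jap{\x}^{-2}|\rho'_\x|^2$ shows that both coefficient ratios equal $\jap{x}^{\m'-2}\jap{\x}^{-2}$, so $g_\rho(\rho')/g_\rho^A(\rho')\equiv\jap{x}^{\m'-2}\jap{\x}^{-2}$. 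This yields $\s_g(x,\x)=\jap{x}^{\m'/2-1}\jap{\x}^{-1}$, and the bound $\s_g\leq 1$ follows from $\m'<2$.

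The principal difficulty lies in (ii). Comparing the diagonal matrices of $g_{\rho'}$ and $g_\rho$ entry by entry, it suffices to bound each of $\jap{x}/\jap{y}$, $\jap{y}/\jap{x}$, $\jap{\x}/\jap{\y}$, and $\jap{\y}/\jap{\x}$ by a positive power of $1+g_{\rho'}^A(\rho-\rho')$ with $\rho=(x,\x)$, $\rho'=(y,\y)$. The formula for $g_{\rho'}^A$ obtained in (iii) gives $|x-y|\leq\jap{y}^{\m'/2}(g_{\rho'}^A(\rho-\rho'))^{1/2}$ and $|\x-\y|\leq (g_{\rho'}^A(\rho-\rho'))^{1/2}$. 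The bounds on the $\x$-ratios follow at once from $\jap{\x}^2\leq 2\jap{\y}^2+2|\x-\y|^2$ and its analogue, together with $\jap{\y}\geq 1$. For the $x$-ratio $\jap{y}/\jap{x}$, one splits into two cases: if $\jap{y}^{\m'/2}(g_{\rho'}^A(\rho-\rho'))^{1/2}\leq\jap{y}/2$, then $\jap{y}\leq 2\jap{x}$ by elementary manipulation of $\jap{y}^2\leq 2\jap{x}^2+2|x-y|^2$, while otherwise $\jap{y}^{1-\m'/2}\lesssim (g_{\rho'}^A(\rho-\rho'))^{1/2}$, giving $\jap{y}\lesssim (g_{\rho'}^A(\rho-\rho'))^{1/(2-\m')}$. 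It is precisely at this step that $\m'<2$ is essential, since otherwise the exponent $1/(2-\m')$ would diverge; this reflects the slow-decay condition $\m<2k$. The reverse bound $\jap{x}\lesssim\jap{y}(1+(g_{\rho'}^A(\rho-\rho'))^{1/2})$ follows directly from $|x-y|\leq\jap{y}^{\m'/2}(g_{\rho'}^A(\rho-\rho'))^{1/2}$ and $\m'\leq 2$. Combining these ratio bounds with the entry-wise comparison of the two metrics yields the $A$-temperateness inequality with some $N>0$.
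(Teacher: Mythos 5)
Your proposal is correct and follows essentially the same route as the paper. For (i) you derive $\jap{x+y}\sim\jap{x}$, $\jap{\xi+\eta}\sim\jap{\xi}$ from smallness of $g_\rho(\rho')$ and substitute, exactly as in Appendix B. For (iii) your dualization observation ($g_\rho\circ A$ is block-diagonal, so $g_\rho^A$ is the inverse block-diagonal form) is a slicker packaging of the same supremum computation the paper carries out by finding the maximizer explicitly, and the final identity $g_\rho(\rho')/g_\rho^A(\rho')\equiv\jap{x}^{\mu'-2}\jap{\xi}^{-2}$ is the same. For (ii) the bookkeeping differs slightly — you bound each coefficient ratio $\jap{x}/\jap{y}$, $\jap{y}/\jap{x}$, $\jap{\xi}/\jap{\eta}$ individually (with a two-case split controlled by whether $|x-y|\leq\jap{y}/2$), whereas the paper first applies $\frac{a+b}{c+d}\leq\frac{a}{c}+\frac{b}{d}$ and then invokes three elementary inequalities — but the core estimate and the place where $\mu'<2$ enters (controlling the $x$-ratio via $\jap{y}^{1-\mu'/2}\lesssim (g^A)^{1/2}$, giving the exponent $1/(2-\mu')$) are identical, and both approaches land on $N=\frac{2}{2-\mu'}$.
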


The proof of this proposition is given in Appendix \ref{appendixslvamet} although its proof is more or less standard.

Next, we introduce weight functions that are uniformly bounded with respect to some parameters.

\begin{defn}
Let $\O$ be a set. 
A family of positive function $\{m_{\o}\}_{\o\in \O}$ on $\re^{2n}$ is called a uniformly tempered weight if
\begin{itemize}
\item (Uniform $g$-continuity) There exist $c>0$ and $C\geq 1$  independent of $\o\in \O$ such that
\begin{align*}
g_{\rho}(\rho')\leq c\Rightarrow C^{-1}m_{\o}(\rho)\leq m_{\o}(\rho+\rho')\leq Cm_{\o}(\rho').
\end{align*}

\item (Uniform $(A,g)$-temperateness) There exist $C,N>0$ independent of $\o\in \O$ such that
\begin{align*}
m_{\o}(\rho')\leq C(1+g_{\rho'}^A(\rho-\rho'))^Nm_{\o}(\rho)
\end{align*}
for all $\rho,\rho'\in \re^{2n}$.

\end{itemize}
If $m_{\o}$ is independent of $\o\in \O$, we just call it a tempered weight.

\end{defn}

\begin{rem}\label{remtemwei}

$(1)$ If $\{m_{\o}\}_{\o\in \O}$ is a uniformly tempered weight, then $\{m_{\o}^{-1}\}_{\o\in \O}$ a uniformly tempered weight. If $\{m_{1,\o}\}_{\o\in \O},\{m_{2,\o}\}_{\o\in \O}$ are uniformly tempered weights, then $\{m_{1,\o}+m_{2,\o}\}_{\o\in \O}, \{m_{1,\o}m_{2,\o}\}_{\o\in \O}$ are also uniformly tempered weights.

$(2)$ $\s_{g}$ is a tempered weight.

\end{rem}

For families of symbols $\{a_{\o}\}_{\o\in\O}\subset C^{\infty}(\re^{2n})$ and a uniformly tempered weight $\{m_{\o}\}_{\o\in \O}$, we call $\{a_{\o}\}_{\o\in \O}\in S_{\O,\mathrm{unif}}(\{m_{\o}\}_{\o\in \O},g)$ or simply $a_{\o}\in S_{\O,\mathrm{unif}}(m_{\o},g)$ if for all $\a,\b\in \mathbb{N}^n$, there exist constants $C_{\a\b}$ such that
\begin{align*}
|\pa_{x}^{\a}\pa_{\x}^{\b}a_{\o}(x,\x)|\leq C_{\a\b}m_{\o}(x,\x)\jap{x}^{\frac{\m'}{2}|\b|-|\a|}\jap{\x}^{-|\b|}\quad \text{for all}\quad (x,\x,\o)\in \re^{2n}\times \O.
\end{align*}
Moreover, we write
\begin{align*}
S_{\O,\mathrm{unif}}^{k,\ell}(g):=S_{\O,\mathrm{unif}}(\jap{x}^{\ell}\jap{\x}^k,g),\quad S_{\O,\mathrm{unif}}^{-\infty,-\infty}:=\bigcap_{k,\ell\in \re}S_{\O,\mathrm{unif}}^{k,\ell}(g)
\end{align*}
for $k,\ell \in \re$. We note that $a\in S_{\O,\mathrm{unif}}^{-\infty,-\infty}$ if and only if $\sup_{(x,\x)\in \re^{2n},\o\in \O}\jap{x}^{M}\jap{\x}^{M}|\pa_x^{\a}\pa_{\x}^{\b}a_{\o}(x,\x)|<\infty$\ for all $\a,\b\in\mathbb{Z}_{\geq 0}^n$ and $M>0$.

Next, we state some fundamental properties of the symbol classes $S_{\O,\mathrm{unif}}^{k,\ell}(g)$ and its quantization.

\begin{prop}\label{unifsymprop}

Let $\{m_{\o}\}_{\o\in \O}$ be a uniformly tempered weight and $a_{j,\o}\in S_{\O,\mathrm{unif}}(m_j,g)$ for $j=1,2$. Let $k,l\in \re$.

\noindent$(i)$ $($Composition formula$)$
 Then there exists $b_{\o}\in S_{\O,\mathrm{unif}}(m_{1,\o}m_{2,\o},g)$ such that $b_{\o}-a_{1,\o}a_{2,\o}\in S_{\O,\mathrm{unif}}(m_{1,\o}m_{2,\o}\s_g,g)$ and
\begin{align*}
\Op(b_{\o})=\Op(a_{1,\o})\Op(a_{2,\o}).
\end{align*}
We denote $b_{\o}=a_{1,\o}\# a_{2,\o}$. For each $J\in\mathbb{Z}_{\geq 0}$, we also have
\begin{align}\label{compasyexp}
b_{\o}(x,\x)-\sum_{j=0}^J\frac{i^j}{j!}A(D)(a_1(x,\x)a_2(y,\y))|_{x=y,\x=\y}\in S_{\O,\mathrm{unif}}(m_{1,\o}m_{2,\o}\s_g^{J+1},g),
\end{align}
where $A(D):=\frac{1}{2}(D_{\x}\cdot D_{y}-D_x\cdot D_{\y})$.
In addition, there exists $\tilde{b}_{\o}\in S_{\O,\mathrm{unif}}(m_{1,\o}m_{2,\o},g)$ such that $a_{1,\o}\# a_{2,\o}-\tilde{b}_{\o}\in S_{\O,\mathrm{unif}}^{-\infty,-\infty}$ and 
\begin{align*}
\supp \tilde{b}_{\o}\subset \supp a_{1,\o}\cup \supp a_{2,\o}
\end{align*}

\noindent$(ii)$ $($Commutator$)$
$[\Op(a_{1,\o}), i\Op(a_{2,\o})]\in \Op S_{\O,\mathrm{unif}}(m_{1,\o}m_{2,\o}\s_g,g)$, where we recall that $\s_g$ is defined in \eqref{sigmagdef}.

\noindent$(iii)$ $($Borel's theorem$)$ Let $b_{\o,j}\in  S_{\O,\mathrm{unif}}(m_{\o}\s_g^{j},g)$ for $j=0,1,2,\cdots$. Then there exists $b_{\o}\in S_{\O,\mathrm{unif}}(m_{\o},g)$ such that $\supp b_{\o}\subset \cup_{j\in\mathbb{Z}_{\geq 0}}\supp b_{\o,j}$ and $b_{\o}\approx \sum_{j=0}^{\infty}b_{\o,j}$ in the sense that
\begin{align*}
b_{\o}- \sum_{j=0}^{N}b_{\o,j}\in S_{\O,\mathrm{unif}}(m_{\o}\s_g^{N+1},g)
\end{align*}
for all $N\in\mathbb{Z}_{\geq 1}$.

\noindent$(iv)$ $($Mapping properties$)$ Let $a_{\o}\in S_{\O,\mathrm{unif}}^{k,\ell}(g)$. Then, for all $s,t\in \re$, we have $\Op(a_{\o})\in B(H^{s+k,t+\ell}, H^{s,t})$ and $\sup_{\o\in \O}\|\Op(a_{\o})\|_{B(H^{s+k,t+\ell}, H^{s,t})}<\infty$.

\noindent$(v)$ $($Compactness$)$ Suppose $k,\ell<0$ and $a_{\o}\in S_{\O,\mathrm{unif}}^{k,\ell}(g)$. Then $\Op(a_{\o})$ is a compact operator in $B(H^{s,t})$ for all $s,t\in \re$.

\end{prop}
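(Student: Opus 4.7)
The plan is to reduce each of (i)--(v) to H\"ormander's general Weyl calculus for slowly varying, $A$-temperate metrics \cite[Chapter 18]{Ho}. The structural hypotheses (slow variation, $A$-temperateness, $\s_g \leq 1$) were verified in Proposition \ref{propslowvary}, so H\"ormander's theorems apply to each $a_{\o}$ individually; the main additional work is tracking uniformity in $\o \in \O$ through the relevant constants.

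For part (i), I would invoke H\"ormander's composition theorem to produce $b_{\o} \in S(m_{1,\o} m_{2,\o}, g)$ together with the asymptotic expansion \eqref{compasyexp}, whose $J$-th remainder lies in $S(m_{1,\o} m_{2,\o} \s_g^{J+1}, g)$. Uniformity in $\o$ is automatic because the remainder estimates are in terms of a fixed finite family of seminorms of $a_{1,\o}, a_{2,\o}$ (bounded by hypothesis) together with tempering constants of $m_{j,\o}$ (uniform by assumption). For the support refinement, I would fix a $g$-subordinate partition of unity $\{\chi_{\n}\}$ with $\chi_{\n} \in S(1, g)$ uniformly in $\n$, decompose $\Op(a_{1,\o}) \Op(a_{2,\o}) = \sum_{\n, \n'} \Op(a_{1,\o} \chi_{\n}) \Op(a_{2,\o} \chi_{\n'})$, and use a non-stationary phase argument in the oscillatory-integral representation of the Weyl product to show that pairs $(\n, \n')$ with $g$-distant supports contribute only to $S_{\O, \mathrm{unif}}^{-\infty, -\infty}$. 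Collecting the retained pairs yields a representative $\tilde b_{\o}$ whose support lies in $\supp a_{1,\o} \cup \supp a_{2,\o}$. Part (ii) is then immediate from (i) with $J = 1$: the zeroth-order term $a_{1,\o} a_{2,\o}$ cancels in the antisymmetrized combination, and the leading first-order term is (up to a sign) the Poisson bracket $\{a_{1,\o}, a_{2,\o}\}$, which belongs to $S_{\O, \mathrm{unif}}(m_{1,\o} m_{2,\o} \s_g, g)$ by the definition of $\s_g$.

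Part (iii) is a standard Borel summation adapted to this calculus: choose $\chi \in C_c^{\infty}(\re)$ with $\chi = 1$ near $0$, form cutoffs $\chi_j$ localizing to growing $g$-neighborhoods of a reference point, and set $b_{\o} := \sum_{j \geq 0} (1 - \chi_j) b_{\o, j}$ with the $\chi_j$ chosen so that each summand has seminorm at most $2^{-j}$ in $S(m_{\o} \s_g^{j}, g)$; since each summand is supported in $\supp b_{\o, j}$, so is $b_{\o}$. For (iv), the Calder\'on--Vaillancourt theorem in H\"ormander's form gives uniform $L^2$-boundedness of $\Op : S(1, g) \to B(L^2)$ in terms of finitely many seminorms. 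To upgrade to $H^{s+k, t+\ell} \to H^{s, t}$, I would sandwich $\Op(a_{\o})$ between powers of $\jap{x}$ and $\jap{D}$ and use (i) to verify that $\jap{D}^s \jap{x}^t \Op(a_{\o}) \jap{x}^{-t-\ell} \jap{D}^{-s-k}$ has Weyl symbol uniformly bounded in $S(1, g)$. Part (v) is then a consequence of (iv): $\Op(a_{\o}) : H^{s, t} \to H^{s-k, t-\ell}$ is uniformly bounded, and since $k, \ell < 0$ the embedding $H^{s-k, t-\ell} \hookrightarrow H^{s, t}$ is compact by Rellich--Kondrachov.

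The main obstacle is the bookkeeping in (i): one must verify that the constants in H\"ormander's asymptotic expansion depend only on finitely many seminorms of the inputs and on fixed structural data of $g$, so that the uniform bound on $\{a_{j,\o}\}$ in $S(m_{j,\o}, g)$ transfers to a uniform bound on $\{a_{1,\o} \# a_{2,\o}\}$ in $S(m_{1,\o} m_{2,\o}, g)$. Once this uniformity is established, parts (ii)--(v) follow from standard manipulations that preserve it automatically.
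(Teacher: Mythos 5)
Your reduction to H\"ormander's Weyl calculus for slowly varying, $A$-temperate metrics is exactly the route the paper takes; the paper offers no independent proof, pointing instead to \cite[\S18.5, \S18.6]{Ho} and calling it ``more or less standard,'' and your Proposition~\ref{propslowvary}-based verification of the structural hypotheses plus the observation that uniformity in $\o$ follows because H\"ormander's constants depend on finitely many seminorms is the right way to make that precise.

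One technical wrinkle: your mechanism for the support claim in (i) --- decomposing via a $g$-subordinate partition of unity and discarding $g$-distant pairs by non-stationary phase --- does not by itself produce a $\tilde b_{\o}$ with $\supp\tilde b_{\o}\subset\supp a_{1,\o}\cup\supp a_{2,\o}$, because the Weyl symbol of $\Op(a_{1,\o}\chi_{\n})\Op(a_{2,\o}\chi_{\n'})$ is only rapidly decaying off the relevant region, not compactly supported there; ``collecting the retained pairs'' therefore gives a symbol with no exact support control. The clean way, which you already have available, is to observe that every term $\frac{i^j}{j!}A(D)^j(a_1a_2)|_{\mathrm{diag}}$ of the asymptotic expansion is literally supported in $\supp a_{1,\o}\cap\supp a_{2,\o}$ (being a finite sum of products of derivatives of $a_1$ and $a_2$ at the same point), and then apply the support-preserving Borel summation of part (iii) to these terms. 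This produces $\tilde b_{\o}$ with $\supp\tilde b_{\o}\subset\supp a_{1,\o}\cap\supp a_{2,\o}$ (in fact stronger than the stated union) and $a_{1,\o}\#a_{2,\o}-\tilde b_{\o}\in\bigcap_N S_{\O,\mathrm{unif}}(m_{1,\o}m_{2,\o}\s_g^{N},g)=S_{\O,\mathrm{unif}}^{-\infty,-\infty}$, the last identification using $\s_g=\jap{x}^{\m'/2-1}\jap{\x}^{-1}$ with both exponents negative. The remaining parts (ii)--(v) are handled as you describe; in particular the compactness in (v) via the compact embedding $H^{s-k,t-\ell}\hookrightarrow H^{s,t}$ for $k,\ell<0$ is the standard argument.
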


For its proof, see \cite[\S18.5,\S18.6]{Ho} although the proof is more or less standard.

\begin{lem}\label{lemweight}
\noindent$(i)$ $\{p+\l\}_{\l\in [0,1]}$ a uniformly tempered weight.

\noindent$(ii)$
Let $\a\in\mathbb{Z}_{\geq 0}^n$. Then $\x^{\a}\in S_{[0,1],\mathrm{unif}}(\jap{\x}^{\frac{\max(k-m,0)}{k}|\a|} (p+\l)^{\frac{|\a|}{2k}},g)$.

\end{lem}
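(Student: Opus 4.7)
The plan is to check both defining conditions of a uniformly tempered weight for $\{p+\l\}_{\l\in [0,1]}$ and to verify the pointwise symbol estimate for $\x^{\a}$ by a direct case analysis. The key identity underlying both parts is the global comparison $p_0(\x)\sim |\x|^{2k}\jap{\x}^{2m-2k}$, which is immediate from Assumption \ref{symbolass}(i); combined with $V(x)\gtrsim \jap{x}^{-\m}$ and the relation $\m'=\m/k$ built into the metric, it lets us absorb every low-frequency error into the potential.

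For (i), uniform $g$-continuity will follow from noting that $g_{(x,\x)}(\d x,\d\x)\leq c$ forces $|\d x|\lesssim \jap{x}$ and $|\d\x|\lesssim \jap{x}^{-\m'/2}\jap{\x}$, so $\jap{x+\d x}\sim \jap{x}$ and $\jap{\x+\d\x}\sim \jap{\x}$, whence $V$ is preserved up to constants. For $p_0$, the inequality $|\x+\d\x|^{2k}\leq C(|\x|^{2k}+|\d\x|^{2k})$ together with the global comparison yields an error bounded by $c^{k}\jap{x}^{-\m}\jap{\x}^{2m}$; this is $\lesssim c^k V(x)$ when $|\x|\lesssim 1$ and $\lesssim c^{k}V(x)p_0(\x)\lesssim c^k(p+\l)$ when $|\x|\geq 1$ (using $V$ bounded), so it is absorbed for $c$ small. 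Uniform $(A,g)$-temperateness uses the explicit computation $g^A_{(x',\x')}(\d x,\d\x)=|\d x|^2\jap{\x'}^2\jap{x'}^{-\m'}+|\d\x|^2\jap{x'}^2$; each of $V(x)$, $p_0(\x)$, and $\l$ will then be bounded in terms of $(1+g^A)^N(p(x',\x')+\l)$ by splitting into a ``near'' regime where slow variation applies directly and a ``far'' regime where $g^A$ is already $\gtrsim \jap{x'}^{2-\m'}$ (or $\gtrsim \jap{x'}^2$). Paired with $V(x')\gtrsim \jap{x'}^{-\m}$ and the hypothesis $\m<2k$ (equivalently $\m'<2$), a sufficiently large power of $(1+g^A)$ absorbs both the bounded tail of $V(x)$ and the polynomial bound $p_0(\x)\lesssim \jap{\x}^{2m}\lesssim (1+g^A)^{m}$.

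For (ii), $\pa_x^{\b}\pa_{\x}^{\c}\x^{\a}$ vanishes unless $\b=0$ and $\c\leq \a$, in which case it is bounded by $|\x|^{|\a|-|\c|}$, so it suffices to verify the pointwise estimate
\[
|\x|^{|\a|-|\c|}\lesssim \jap{\x}^{\frac{\max(k-m,0)}{k}|\a|-|\c|}(p+\l)^{\frac{|\a|}{2k}}\jap{x}^{\frac{\m'}{2}|\c|}.
\]
For $|\x|\geq 1$ I use $(p+\l)^{|\a|/(2k)}\gtrsim \jap{\x}^{m|\a|/k}$ together with the identity $\max(k-m,0)+m=\max(k,m)\geq k$ to reduce the right-hand side to $\gtrsim \jap{\x}^{|\a|-|\c|}\geq |\x|^{|\a|-|\c|}$. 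For $|\x|\leq 1$ I split further on whether $|\x|\gtrless \jap{x}^{-\m'/2}$: in the first subregion use $(p+\l)^{|\a|/(2k)}\gtrsim p_0(\x)^{|\a|/(2k)}\gtrsim |\x|^{|\a|}$ together with $|\x|^{-|\c|}\leq \jap{x}^{\m'|\c|/2}$; in the second use $(p+\l)^{|\a|/(2k)}\gtrsim V(x)^{|\a|/(2k)}\gtrsim \jap{x}^{-\m'|\a|/2}$ and match with $|\x|^{|\a|-|\c|}\leq \jap{x}^{-\m'(|\a|-|\c|)/2}$. The main obstacle throughout is the degeneracy of $p_0$ at $\x=0$: naive slow variation fails there, and the whole argument relies on $V$ providing a lower bound at the exact scale $\jap{x}^{-\m}$ matched to the metric parameter $\m'=\m/k$, so that the error produced by moving $\d\x$ of unit $g$-length is always controlled by the potential.
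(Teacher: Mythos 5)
Your treatment of part (ii) is correct and essentially matches the paper's: the paper factors $p^{(|\a|-|\b|)/(2k)}$ directly via $p^{-1}\lesssim\jap{x}^{\m}$, whereas you split at $|\x|\approx\jap{x}^{-\m'/2}$ and invoke $p\gtrsim p_0$ and $p\gtrsim V$ separately; both routes are valid and roughly equivalent in content. Your argument for uniform $g$-continuity in part (i) is also sound — the unified bound $p_0(\x)\sim|\x|^{2k}\jap{\x}^{2m-2k}$ is a clean way to handle both frequency regimes at once.

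The gap is in the uniform $(A,g)$-temperateness. Your central chain $p_0(\x)\lesssim\jap{\x}^{2m}\lesssim(1+g^A)^m$ is false: taking $(x',\x')=(x,\x)$ gives $g^A=0$ while $\jap{\x}$ is arbitrary, and more importantly, in the ``far in $x$ but near in $\x$'' sub-regime (i.e.\ $|x-x'|\gtrsim\jap{x'}$ but $|\x-\x'|\lesssim\jap{\x'}$, so $\jap{\x}\sim\jap{\x'}$), the quantity $g^A$ carries no information about $\jap{\x}$. Your ``far $\Rightarrow g^A\gtrsim\jap{x'}^{2-\m'}$'' observation is correct, but it controls only $\jap{x'}^{\m}$, not $\jap{\x}^{2m}$. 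What is actually needed there is the detour $\jap{\x}^{2m}\sim\jap{\x'}^{2m}\lesssim 1+p_0(\x')\lesssim\jap{x'}^{\m}\,p(x',\x')$, and only then absorb the $\jap{x'}^{\m}$ via $(1+g^A)^{\m/(2-\m')}$; the $(1+g^A)^m$ bound is only available when $|\x-\x'|\gtrsim\jap{\x'}$. This sub-case distinction is precisely the content of the paper's three auxiliary lemmas in Appendix B (the dichotomy $\jap{x}^{2k}\jap{y}^{-\m}\gtrless 2^{-k}$, Lemma B.1, and the $|\x|\lessgtr 1$, $|\y|\lessgtr 1/2$ splits), which is why the paper flags this as ``more cumbersome than it looks.'' As written, your sketch elides the main difficulty of the lemma and rests on a false intermediate inequality; it needs the missing sub-case to be a proof.
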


We leave the proof of this lemma to Appendix \ref{appendixslvamet}. The proof of $(i)$ is more cumbersome than it looks. The part $(ii)$ is important since it implies that the information of the zero of $\x^{\a}$ is encoded in this symbol class.

\subsection{Functional calculus}
This subsection collects some useful results from the functional analysis and the basis theory of microlocal analysis.
For a Borel measurable function $f$ on $\re$, the operator $f(P)$ of $P$ on $L^2$ is defined by the functional calculus. The results in this subsection hold without Assumption \ref{potass} $(ii),(iii)$.

Now we recall the Helffer-Sj\"ostrand formula (\cite[Theorem 14.8]{Z}): For $\g\in C_c^{\infty}(\re)$ or more generally $\g\in C^{\infty}(\re)$ satisfying $|\g^{(j)}(r)|\lesssim \jap{r}^{s-j}$ for some $s$, we denote its almost analytic extension by $\tilde{\g}\in C^{\infty}(\mathbb{C})$. Namely, $\tilde{\g}$ satisfies $\tilde{\g}|_{\re}=\g$, $|\tilde{\g}(z)|\lesssim \jap{z}^s$, $\pa_{\bar{z}}\tilde{\g}(z)=O(\jap{z}^{s-1}\left(\frac{|\Im z|}{\jap{z}}\right)^{\infty})$ and $\supp \tilde{\g}\subset \{z\in\mathbb{C}\mid \Re z\in \supp \g,\,\, |\Im z|\leq \jap{\Re z}\}$ (in particular, we can take $\tilde{\g}\in C_c^{\infty}(\mathbb{C})$ if $\g\in C_c^{\infty}(\re)$). 
Then the Helffer-Sj\"ostrand formula states
\begin{align}\label{HScauchy}
\g^{(j)}(p/\l)=(-1)^jj!\pi^{-1}\int_{\mathbb{C}}\pa_{\bar{z}}\tilde{\g}(z)(p/\l-z)^{-j-1}dm(z)\quad j\in\mathbb{Z}_{\geq 0},
\end{align}
where the cases $j\geq 1$ follow from Green's formula.

\begin{lem}\label{Pfuncpse}
For $s\in \re$, $(P+1)^s,\jap{P}^s\in \Op S^{2ms,0}$. Moreover, for $\g\in C_c^{\infty}(\re)$, we have $\g(P)\in \cap_{s\in \re}\Op S^{s,0}$.
\end{lem}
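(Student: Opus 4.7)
The plan is to reduce everything to the scattering ellipticity of $P+1$ in the symbol class of Subsection~\ref{subsecscsym}, and then bootstrap via functional calculus. First I would observe that $p_0\in S^{2m,0}$ by Assumption~\ref{symbolass}~$(i)$ and $V\in S^{0,-\m}\subset S^{0,0}$ by Assumption~\ref{potass}~$(i)$, so $p+1=p_0(\x)+V(x)+1\in S^{2m,0}$. Since $p_0,V\geq 0$ we have $p+1\geq 1$ everywhere, and combined with $p_0(\x)\gtrsim \jap{\x}^{2m}$ for $|\x|\geq 1$ this gives the global scattering ellipticity $p+1\gtrsim \jap{\x}^{2m}$. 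The standard iterative parametrix construction in $\Op S^{\bullet,\bullet}$ (composition formula of Subsection~\ref{subsecscsym} combined with Borel summation) then produces $Q\in\Op S^{-2m,0}$ with $Q(P+1)-I,\,(P+1)Q-I\in\Op S^{-\infty,-\infty}$; since $P+1\geq 1$ is invertible on $L^{2}$, comparison with $Q$ gives $(P+1)^{-1}\in\Op S^{-2m,0}$, and iterated composition yields $(P+1)^{N}\in\Op S^{2mN,0}$ for every $N\in\mathbb{Z}$.

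Next I would extend to arbitrary real powers through Balakrishnan's formula
\begin{align*}
(P+1)^{t}=\frac{\sin(\pi|t|)}{\pi}\int_0^{\infty}\l^{t}(P+1+\l)^{-1}\,d\l,\qquad t\in(-1,0),
\end{align*}
after writing $s=N+t$ with $N\in\mathbb{Z}$. The parametrix construction of the previous paragraph, performed uniformly in $\l\geq 0$, places $(P+1+\l)^{-1}\in\Op S^{-2m,0}$ with seminorms picking up extra negative powers of $1+\l$ from ellipticity; the scalar identity $\int_0^{\infty}\l^{t}(p+1+\l)^{-1}d\l=\pi(p+1)^{t}/\sin(\pi|t|)$ then shows that the principal symbol of the integral is $(p+1)^{t}\in S^{2mt,0}$, while the subprincipal corrections coming from the composition formula remain in $S^{2mt,0}$. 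The claim for $\jap{P}^{s}$ follows by the same argument after first constructing $\jap{P}=(1+P^{2})^{1/2}\in\Op S^{2m,0}$ from $1+P^{2}\in\Op S^{4m,0}$ via the $t=1/2$ instance. For $\g\in C_c^{\infty}(\re)$ I apply the Helffer--Sj\"ostrand formula~\eqref{HScauchy} with $j=0$: symbolic parametrix bounds for $(P-z)^{-1}$ with $z\in\supp\tilde{\g}\setminus\re$, combined with $|\pa_{\bar z}\tilde{\g}|=O(|\Im z|^{\infty})$, give $\g(P)\in\Op S^{-2m,0}$. To upgrade to $\cap_{s}\Op S^{s,0}$ I use the functional-calculus identity $\g(P)=(P+1)^{-N}\cdot\bigl((\l+1)^{N}\g(\l)\bigr)(P)$: the second factor is again a compactly supported function of $P$, hence in $\Op S^{-2m,0}$, while $(P+1)^{-N}\in\Op S^{-2mN,0}$ from the first paragraph, so $\g(P)\in\Op S^{-2m-2mN,0}$ for every $N\in\mathbb{Z}_{\geq 0}$.

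The hard part will be the uniform-in-$\l$ control of the iterative parametrix for $P+1+\l$: although the scalar symbol $(p+1+\l)^{-1}$ obeys scattering symbol estimates with an extra gain of $(1+\l)^{-1}$ from ellipticity, one must track how the asymptotic expansion coming from the composition formula interacts with the parameter $\l$ in order to conclude convergence of the Balakrishnan integral in the $S^{2mt,0}$ topology. The analogous point arises for the Helffer--Sj\"ostrand representation of $\g(P)$, where the singularity of $(P-z)^{-1}$ as $|\Im z|\to 0$ has to be absorbed by the almost analytic extension. Beyond this bookkeeping the argument is a routine piece of scattering-calculus functional calculus, in the spirit of~\cite[\S 18.5]{Ho}.
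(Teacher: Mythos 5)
Your proposal is correct and follows the same overall structure the paper has in mind (parametrix construction in the scattering calculus to produce $(P+1)^{-1}\in\Op S^{-2m,0}$, iteration for integer powers, and Helffer--Sj\"ostrand for the remainder), but you replace Helffer--Sj\"ostrand by Balakrishnan's formula for the non-integer powers $(P+1)^{s}$, whereas the paper indicates that Helffer--Sj\"ostrand is used throughout (with an almost analytic extension of the polynomially growing function $(\l+1)^{s}$, as described before~\eqref{HScauchy}). Both routes are standard; the Balakrishnan route trades the $|\Im z|^{-k}$ blow-up of the resolvent parametrix in the complex plane for a real-parameter integral where the gain in $\l$ from ellipticity has to be tracked in the asymptotic expansion, and the Helffer--Sj\"ostrand route avoids the fractional-power integral at the expense of working with almost analytic extensions of unbounded symbols. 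One small inaccuracy: you invoke the ``$t=1/2$ instance'' of your Balakrishnan identity to get $\jap{P}=(1+P^{2})^{1/2}$, but the formula you wrote is only valid for $t\in(-1,0)$; you should instead write $\jap{P}^{s}=(1+P^{2})^{N}\cdot(1+P^{2})^{t}$ with $t\in(-1,0)$ and $N\in\mathbb{Z}$, or equivalently factor $\jap{P}^{s}=\bigl(\jap{P}^{s}(P+1)^{-s}\bigr)(P+1)^{s}$ and treat the bounded first factor by functional calculus. Aside from that bookkeeping, the argument — in particular the reduction of $\g(P)\in\cap_{s}\Op S^{s,0}$ to $\g(P)=(P+1)^{-N}\g_{N}(P)$ with $\g_{N}(\l)=(\l+1)^{N}\g(\l)$ — is sound and matches what the paper intends.
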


\begin{proof}
The proof is very standard by using the Helffer-Sj\"ostrand formula and a parametrix construction in the scattering symbol class. An argument similar to \cite[Theorem 14.9]{Z} (or the proof of Theorem \ref{Rieszthm} in this paper) works. We omit the detail.
\end{proof}

\begin{lem}\label{Resxpres}
Let $L\in \re$. 

\noindent$(i)$ We have $\|\jap{x}^L(P-\l z)^{-1}  \jap{x}^{-L}\|\lesssim \l^{-|L|}\jap{\l z}^{\frac{|L|}{2m}} |\Im z|^{-|L|-1}$ for $z\in\mathbb{C}\setminus \re$ and $0<\l\leq 1$.

\noindent$(ii)$ For $\g\in C_c^{\infty}(\re)$, we have $\|\jap{x}^L\g(P/\l)  \jap{x}^{-L}\|\lesssim \l^{-|L|+1}$ for $0<\l\leq 1$.

\end{lem}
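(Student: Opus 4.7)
The plan is to derive (ii) from (i) via the Helffer--Sj\"ostrand formula, and to establish (i) by induction on non-negative integer $L$, extending to general real $L$ by duality and interpolation.

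For (ii), I would pick an almost analytic extension $\tilde\gamma \in C_c^\infty(\mathbb{C})$ of $\gamma$ vanishing to arbitrary order in $|\Im z|$ and write
\[
\gamma(P/\lambda) = -\pi^{-1}\lambda\int_\mathbb{C}\partial_{\bar z}\tilde\gamma(z)(P - \lambda z)^{-1}\,dm(z)
\]
by \eqref{HScauchy}. Sandwiching by $\jap{x}^{\pm L}$, inserting the bound from (i) under the integral (noting $\jap{\lambda z}$ is bounded on $\supp\tilde\gamma$ for $\lambda\le 1$), and integrating the convergent density $|\partial_{\bar z}\tilde\gamma(z)||\Im z|^{-|L|-1}$ produces the stated $\lambda^{1-|L|}$ bound.

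For (i), the base case $L=0$ is the standard spectral bound $\|(P-\lambda z)^{-1}\|\le (\lambda|\Im z|)^{-1}$. For the inductive step, I would use the identity
\[
\jap{x}^L(P-\lambda z)^{-1}\jap{x}^{-L} = W_{L-1} - \jap{x}^{L-1}(P-\lambda z)^{-1}[\jap{x},P](P-\lambda z)^{-1}\jap{x}^{-L},
\]
where $W_{L-1}:=\jap{x}^{L-1}(P-\lambda z)^{-1}\jap{x}^{-(L-1)}$ is controlled by the inductive hypothesis. Since $V$ is multiplicative, $[\jap{x},P]=[\jap{x},p_0(D_x)]$, which lies in $\Op S^{2m-1,0}$ by the scattering pseudodifferential calculus with leading symbol $i^{-1}\jap{x}^{-1}x\cdot\nabla_\xi p_0(\xi)$; hence $[\jap{x},P](P+1)^{-1+\frac{1}{2m}}$ is $L^2$-bounded. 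Combined with the spectral-calculus estimate $\|(P+1)^{1-\frac{1}{2m}}(P-\lambda z)^{-1}\|\lesssim \jap{\lambda z}^{1-\frac{1}{2m}}(\lambda|\Im z|)^{-1}$ and an arrangement that makes the remaining weighted resolvent factor coincide with $W_{L-1}$, the inductive hypothesis closes the bound with the correct one-step increment.

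For negative integer $L$, duality via self-adjointness of $P$, together with $|\Im\bar z|=|\Im z|$ and $\jap{\lambda\bar z}=\jap{\lambda z}$, reduces the estimate to the positive-integer case. For non-integer $L\in\re$, I would apply Stein complex interpolation to the analytic family $s\mapsto \jap{x}^s(P-\lambda z)^{-1}\jap{x}^{-s}$, whose analyticity in $s$ is inherited from the pseudodifferential calculus developed in Section \ref{subsectionslvame}.

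The main obstacle is the sharp balance in the inductive step: a naive triangle-inequality estimate on the commutator term distributes the gain $(P+1)^{1-\frac{1}{2m}}$ poorly and overshoots by a factor $\jap{\lambda z}^{1-\frac{1}{m}}$ per step, giving $\jap{\lambda z}^{1-\frac{1}{2m}}$ instead of the target $\jap{\lambda z}^{\frac{1}{2m}}$. Achieving the correct power requires pairing the spectral gain $(P+1)^{1-\frac{1}{2m}}$ precisely --- partly with $[\jap{x},P]$ (to restore $L^2$-boundedness) and partly with one of the two resolvents (to extract exactly the right $\jap{\lambda z}$ power) --- so that the leftover structure is $W_{L-1}$ and nothing is wasted before applying the inductive hypothesis.
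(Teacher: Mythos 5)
Your strategy matches the paper's: prove (i) by induction on integer $L\ge0$ (duality and Stein interpolation handling general real $L$), drive the inductive step with the resolvent commutator identity plus a spectral estimate for $(P+1)^{\frac{2m-1}{2m}}(P-\lambda z)^{-1}$, and deduce (ii) from (i) via the Helffer--Sj\"ostrand formula, the extra factor of $\lambda$ coming from $(P/\lambda-z)^{-1}=\lambda(P-\lambda z)^{-1}$ and $\jap{\lambda z}$ being bounded on $\supp\tilde\gamma$. The only implementation difference is that you commute through one power of $\jap{x}$ at a time, while the paper commutes the full weight $\jap{x}^{L+1}$ through $(P-\lambda z)^{-1}$ in a single stroke. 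The paper's choice is cleaner: $[P,\jap{x}^{L+1}]\in\Op S^{2m-1,L}$, so splitting $\jap{x}^{-(L+1)}=\jap{x}^{-L}\jap{x}^{-1}$ immediately yields the bounded factor $(P+1)^{-\frac{2m-1}{2m}}[P,\jap{x}^{L+1}]\jap{x}^{-L}\in B(L^2)$ and leaves exactly the inductive quantity $\jap{x}^{L}(P-\lambda z)^{-1}\jap{x}^{-L}$ (times the harmless $\jap{x}^{-1}$). Your one-at-a-time version wedges $[\jap{x},P]$ between resolvents with unmatched $\jap{x}$-weights, which is precisely what forces the juggling in your last paragraph.

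On that last paragraph: you have spotted a genuine issue, but your proposed resolution is a dead end and you should not chase it. The spectral theorem gives $\|(P+1)^{\frac{2m-1}{2m}}(P-\lambda z)^{-1}\|\lesssim\lambda^{-1}|\Im z|^{-1}\jap{\lambda z}^{\frac{2m-1}{2m}}$, not $\jap{\lambda z}^{\frac{1}{2m}}$. The full power $(P+1)^{\frac{2m-1}{2m}}$ must be spent on the commutator side just to land in $\Op S^{0,0}$, so none of it can be redistributed without losing $L^2$-boundedness, and the accompanying $\jap{\lambda z}^{\frac{2m-1}{2m}}$ therefore attaches in full. Each inductive step contributes $\jap{\lambda z}^{\frac{2m-1}{2m}}$, so the honest output of this argument (yours and the paper's alike) is $\jap{\lambda z}^{\frac{(2m-1)|L|}{2m}}$ rather than the printed $\jap{\lambda z}^{\frac{|L|}{2m}}$; the two agree only at $m=1$, the setting of the cited \cite[Lemma 2.11]{M}. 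There is no clever pairing that recovers the smaller exponent --- simply carry the larger one. This is harmless for everything downstream: part (i) is used only through part (ii), where $z$ is confined to the compact set $\supp\tilde\gamma$ and $0<\lambda\le1$, so $\jap{\lambda z}\lesssim1$ and the exponent never matters.
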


\begin{proof}
\noindent$(i)$
We follow the argument in \cite[Lemma 2.11]{M}. 
By Stein's complex interpolation and the duality, we may assume $L\in \mathbb{Z}_{\geq 0}$. We prove it by induction. The case $L=0$ directly follows from the spectral theorem. Suppose that $\|\jap{x}^{L}(P-\l z)^{-1}  \jap{x}^{-L}\|\lesssim \l^{-L}\jap{\l z}^{\frac{L}{2m}} |\Im z|^{-L-1}$ is proved. We have
\begin{align*}
\jap{x}^{L+1}(P-\l z)^{-1}  \jap{x}^{-L-1}=&(P-\l z)^{-1}-(P-\l z)^{-1} [P,\jap{x}^{L+1}](P-\l z)^{-1}\jap{x}^{-L-1}\\
=&(P-\l z)^{-1}-(P-\l z)^{-1} [P,\jap{x}^{L+1}](P-\l z)^{-1} \jap{x}^{-L-1}.
\end{align*}
Due to the spectral theorem, we have 
\begin{align*}
\|(P-\l z)^{-1}\|\lesssim \l^{-1}|\Im z|^{-1},\quad \|(P-\l z)^{-1}(P+1)^{\frac{2m-1}{2m}}\|\lesssim \l^{-1}|\Im z|^{-1}\jap{\l z}^{\frac{1}{2m}}.
\end{align*}
Moreover, $(P+1)^{-\frac{2m-1}{2m}}[P,\jap{x}^{L+1}]\jap{x}^{-L}\in B(L^2)$ since $(P+1)^{\frac{2m-1}{2m}}\in \Op S^{2m-1,0}$, $[P,\jap{x}^{L+1}]\in \Op S^{2m-1,L}$ and $\jap{x}^{-L}\in \Op S^{0,-L}$. Hence,
\begin{align*}
\|\jap{x}^{L+1}(P-\l z)^{-1}  \jap{x}^{-L-1}\|\lesssim& \|(P-\l z)^{-1}\|+\|(P-\l z)^{-1} [P,\jap{x}^{L+1}](P-\l z)^{-1} \jap{x}^{-L-1}\|\\
\lesssim&\l^{-1}|\Im z|^{-1}+\l^{-1}|\Im z|^{-1}\cdot \jap{\l z}^{\frac{1}{2m}}\cdot 1\cdot \l^{-L}\jap{\l z}^{\frac{L}{2m}} |\Im z|^{-L-1}\\
\lesssim&\l^{-L}\jap{\l z}^{\frac{L+1}{2m}} |\Im z|^{-L-2}
\end{align*}
due to the induction hypothesis. This completes the proof.

\noindent$(ii)$ This follows from $(i)$ and the Helffer-Sj\"ostrand formula \eqref{HScauchy}, where we also use $\jap{\l z}\lesssim 1$ for $\Re z\in \supp\g$ and $|\Im z|\lesssim 1$.

\end{proof}

Finally, we prove a bi-ideal property of the uniform residual class $\Op S_{\O,\mathrm{unif}}^{-\infty,-\infty}$.

\begin{lem}\label{bi-idelem}
Let $\O$ be a set. Suppose $R_{1,\o},R_{2,\o}\in \Op S_{\O,\mathrm{unif}}^{-\infty,-\infty}$ and $A_{\o}\in B(\mathcal{S}(\re^n),\mathcal{S}'(\re^n))$ such that $\sup_{\o\in \O}\|A_{\o}\|_{B(H^{s,t},H^{s',t'})}<\infty$ for some $s,s',t,t'\in \re$. Then we have $R_{2,\o}A_{\o}R_{1,\o}\in \Op S_{\O,\mathrm{unif}}^{-\infty,-\infty}$.

\end{lem}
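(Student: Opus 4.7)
The plan is to translate membership in $\Op S_{\O,\mathrm{unif}}^{-\infty,-\infty}$ into a statement about integral kernels being uniformly jointly Schwartz in $(x,y)$, and then bound the kernel of $R_{2,\o}A_\o R_{1,\o}$ through a duality pairing of uniformly Schwartz test functions against the uniformly bounded map $A_\o$.

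First, I would record the following kernel characterization: $\Op(b_\o)\in \Op S_{\O,\mathrm{unif}}^{-\infty,-\infty}$ if and only if its Weyl kernel
\[
K_\o(x,y)=\frac{1}{(2\pi)^n}\int_{\re^n}e^{i(x-y)\cdot\x}\,b_\o\!\left(\frac{x+y}{2},\x\right)d\x
\]
satisfies $\sup_{\o\in\O}\sup_{x,y}\jap{x}^N\jap{y}^N|\pa_x^\a\pa_y^\b K_\o(x,y)|<\infty$ for every $N\in\mathbb{Z}_{\geq 0}$ and every $\a,\b\in\mathbb{Z}_{\geq 0}^n$. The $(\Rightarrow)$ direction follows from integration by parts based on $(1-\Delta_\x)^M e^{i(x-y)\cdot\x}=\jap{x-y}^{2M}e^{i(x-y)\cdot\x}$ together with $\jap{x},\jap{y}\lesssim \jap{(x+y)/2}\jap{x-y}$, using that the symbol seminorms of $b_\o$ are uniform in $\o$ on each $S_{\O,\mathrm{unif}}^{k,\ell}(g)$. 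The $(\Leftarrow)$ direction uses the inversion $b_\o(x,\x)=\int e^{-iz\cdot\x}K_\o(x+z/2,x-z/2)\,dz$, which is Schwartz in $(x,\x)$ by differentiation under the integral sign. The constants in both directions depend only on finitely many seminorms of $b_\o$ (respectively $K_\o$), so uniformity in $\o$ is preserved. Applied to $R_{1,\o}$ and $R_{2,\o}$, this yields kernels $K_{1,\o},K_{2,\o}$ such that, for any $s,t\in\re$, any $N\in\mathbb{Z}_{\geq 0}$, and any $\a,\b\in\mathbb{Z}_{\geq 0}^n$,
\[
\sup_{\o\in\O}\jap{y}^N\|\pa_y^\b K_{1,\o}(\cdot,y)\|_{H^{s,t}}<\infty,\qquad \sup_{\o\in\O}\jap{x}^N\|\pa_x^\a K_{2,\o}(x,\cdot)\|_{H^{s,t}}<\infty.
\]

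Second, I would verify that $R_{2,\o}A_\o R_{1,\o}$ has Schwartz kernel
\[
K_\o(x,y)=\langle K_{2,\o}(x,\cdot),\,A_\o K_{1,\o}(\cdot,y)\rangle_{H^{-s',-t'},\,H^{s',t'}},
\]
which is well defined because $A_\o K_{1,\o}(\cdot,y)\in H^{s',t'}$ by the uniform mapping hypothesis, while $K_{2,\o}(x,\cdot)\in \mathcal{S}(\re^n)\subset H^{-s',-t'}$ by the previous step; the interchange of integrations identifying this with the actual kernel is justified by the continuity of $A_\o:\mathcal{S}\to\mathcal{S}'$ and the Schwartz dependence of $K_{1,\o}(\cdot,y)$ on $y$. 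The Sobolev duality bound then gives
\[
|K_\o(x,y)|\leq \|K_{2,\o}(x,\cdot)\|_{H^{-s',-t'}}\,\|A_\o\|_{B(H^{s,t},H^{s',t'})}\,\|K_{1,\o}(\cdot,y)\|_{H^{s,t}}\lesssim \jap{x}^{-N}\jap{y}^{-N}
\]
for every $N$. Since $\pa_x^\a$ and $\pa_y^\b$ act only on $K_{2,\o}(x,\cdot)$ and $K_{1,\o}(\cdot,y)$ respectively, and the differentiated kernels are still uniformly jointly Schwartz, applying the same pairing estimate to each derivative yields $|\pa_x^\a\pa_y^\b K_\o(x,y)|\lesssim \jap{x}^{-N}\jap{y}^{-N}$ uniformly in $\o$. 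By the characterization of the first step, this forces $R_{2,\o}A_\o R_{1,\o}\in \Op S_{\O,\mathrm{unif}}^{-\infty,-\infty}$.

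The main obstacle is the uniform kernel characterization: the single-operator version is textbook pseudodifferential calculus, but one has to keep track that the constants in the integration-by-parts estimates depend only on finitely many symbol seminorms of $b_\o$, so that uniformity in $\o$ is faithfully transferred between symbol and kernel. Once this characterization is granted, the duality estimate that closes the proof is routine.
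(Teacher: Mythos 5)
Your proof is correct, but it follows a genuinely different route from the paper's. You argue through the Schwartz kernel: you first characterize $\Op S_{\O,\mathrm{unif}}^{-\infty,-\infty}$ by requiring the Weyl kernel $K_{\o}(x,y)$ to be jointly Schwartz in $(x,y)$ with seminorms uniform in $\o$, then write the kernel of $R_{2,\o}A_{\o}R_{1,\o}$ as a Sobolev duality pairing
\[
K_{\o}(x,y)=\bigl\langle K_{2,\o}(x,\cdot),\,A_{\o}K_{1,\o}(\cdot,y)\bigr\rangle_{H^{-s',-t'},\,H^{s',t'}},
\]
and read off arbitrary polynomial decay and smoothness from the uniform operator bound on $A_{\o}$ together with the uniform Schwartz character of $K_{1,\o}$ and $K_{2,\o}$. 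The paper avoids kernels entirely: it first notes that $B_{\o}:=R_{2,\o}A_{\o}R_{1,\o}$ is uniformly bounded $H^{s_1,t_1}\to H^{s_2,t_2}$ for every choice of indices, writes $B_{\o}=\Op(b_{\o})$ via the Schwartz kernel theorem, and then uses a Beals-type characterization, namely the bound $\|b\|_{L^{\infty}}\lesssim\sum_{|\c|\leq M}\|\Op(\pa^{\c}b)\|_{B(L^2)}$ from \cite[Theorem 8.1]{Z} together with the commutator identities $\mathrm{ad}_{x_j}\Op(b)=-\Op(D_{\x_j}b)$ and $\mathrm{ad}_{D_{x_j}}\Op(b)=\Op(D_{x_j}b)$, to transfer the uniform operator norms of $B_{\o}$ on the scale $\{H^{s,t}\}$ into uniform Schwartz seminorms of $b_{\o}$. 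Your kernel-duality argument is more hands-on and self-contained, while the paper's commutator argument is shorter because it outsources the quantitative work to the cited Beals-type theorem; both preserve uniformity in $\o$ because each step depends on only finitely many seminorms.
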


\begin{proof}
By our assumptions and the mapping property of $\Op S_{\O,\mathrm{unif}}^{-\infty,-\infty}$, for each $s_j,t_j\in \re$, we have $\sup_{\o\in \O}\|R_{2,\o}A_{\o}R_{1,\o}\|_{B(H^{s_1,t_1},H^{s_2,t_2})}<\infty$. Set $B_{\o}:=R_{2,\o}A_{\o}R_{1,\o}$ and write $B_{\o}=\Op(b_{\o})$ with some $b_{\o}\in \mathcal{S}'(\re^n)$. Such $b_{\o}$ exists due to the Schwartz kernel theorem and the Fourier transform. Then it suffices to prove that $b_{\o}\in \mathcal{S}(\re^{2n})$ and all the seminorms of $b_{\o}$ are uniformly bounded with respect to $\o\in \O$.

We recall from \cite[Theorem 8.1]{Z} that there exists $M>0$ such that $\|b\|_{L^{\infty}(\re^n)}\lesssim\sum_{|\c|\leq M}\|\Op(\pa^{\c}b)\|_{B(L^2)}$ for all $b\in\mathcal{S}'(\re^n)$ if the right hand side is bounded and from \cite[$(8.1.15)$]{Z} that $\mathrm{ad}_{x_j}\Op(b)=-\Op(D_{\x_j}b)$ and $\mathrm{ad}_{D_{x_j}}\Op(b)=\Op(D_{x_j}b)$. These implies that for each $N_1,N_2\in\mathbb{Z}_{\geq 0}$, there exist $s,s',t,t'\in \re$ such that $\|\jap{x}^{2N_1}\jap{D_x}^{2N_2}b\|_{L^{\infty}(\re^n)}\lesssim\|\Op(b)\|_{B(H^{s,t},H^{s',t'})}$ for all $b\in\mathcal{S}'(\re^n)$. Applying this estimate with $b=b_{\o}$, we conclude that $b_{\o}\in\mathcal{S}(\re^n)$ and all the seminorms of $b_{\o}$ are uniformly bounded with respect to $\o\in \O$.

\end{proof}

\section{Weaker Agmon estimate}\label{SectionAg}

In this section, we prove Theorem \ref{Agthm}.
We divide it into two parts: For fixed $0<\e<1/\m$, we prove
\begin{align*}
\left\|\jap{x}^L\g(P/\l) 1_{\{|x|\in [0,\l^{-\e}]\}  }\right\|=O(\l^{\infty}),\quad \left\|\jap{x}^L\g(P/\l) 1_{\{|x|\in [\l^{-\e},c\l^{-1/\m}]\}  }\right\|=O(\l^{\infty})
\end{align*}
for $0<\l\leq 1$ and sufficiently small $c>0$.

\subsection{Analysis of the Riesz operator}\label{subsecinv}

In this subsection, we prove
\begin{align}\label{Ag1}
\left\|\jap{x}^L\g(P/\l) 1_{\{|x|\in [0,\l^{-\e}]\}  }\right\|=O(\l^{\infty})\quad \text{as}\quad \l\to 0.
\end{align}

First, we show that the Riesz operator $P^{-1}$ and its power are pseudodifferential operator with exotic symbol belonging to $S(p^{-1},g)$ or its generalization to a family $(P+\l)^{-1}$. The mapping properties of  the Riesz operator follows from the general theory of the pseudodifferential operators.
Such mapping properties are also used in Section \ref{sectionMourre}. We note that symbol classes $S_{J,\mathrm{unif}}((p+\l)^N,g)$ and $S_{J,\mathrm{unif}}((p+\l)^{-N},g)$ are well-defined by Lemma \ref{lemweight} and Remark \ref{remtemwei} $(1)$.

\begin{thm}\label{Rieszthm}
Set $J=[0,1]$ and $N\in\mathbb{Z}_{\geq 1}$.

\noindent$(i)$ The power $P^N$ as a linear continuous map from $\mathcal{S}(\re^n)$ to itself is invertible. We denote its inverse by $P^{-N}$.

\noindent$(ii)$ $(P+\l)^{-N}\in \Op S_{J,\mathrm{unif}}((p+\l)^{-N},g)$.

\noindent$(iii)$ For all $s,t\in \re$, the family of the operators $\{(P+\l)^{-N}\}_{\l\in J}$ is uniformly bounded in $B(H^{s,t}(\re^n), H^{s+2mN,t-\m N}(\re^n))$.

\end{thm}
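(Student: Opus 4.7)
The proof rests on constructing a parametrix for $(P+\l)^N$ inside the exotic symbol class $S_{J,\mathrm{unif}}((p+\l)^{-N},g)$, $J=[0,1]$, and then identifying the resolvent with this parametrix modulo a uniformly smoothing remainder. The inequalities $p+\l\gtrsim \jap{x}^{-\m}$ from Assumption \ref{potass}(ii) and $p+\l\gtrsim \jap{\x}^{2m}$ for $|\x|\geq 1$ from Assumption \ref{symbolass}(i) ensure that $(p+\l)^{-N}$ is a uniformly tempered weight (using Lemma \ref{lemweight}(i) and Remark \ref{remtemwei}) dominated by $\jap{x}^{\m N}\jap{\x}^{-2mN}$. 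Beginning with the ansatz $q^{(0)}_{\l}=(p+\l)^{-N}$, the composition formula \eqref{compasyexp} yields $(p+\l)^N\#(p+\l)^{-N}=1+e^{(1)}_{\l}$ with $e^{(1)}_{\l}\in S_{J,\mathrm{unif}}(\s_g,g)$; iteratively cancelling each new order in $\s_g$ and assembling the corrections via Borel's theorem (Proposition \ref{unifsymprop}(iii)) produces $Q_{\l,N}:=\Op(q_{\l})\in \Op S_{J,\mathrm{unif}}((p+\l)^{-N},g)$ satisfying
\begin{align*}
(P+\l)^N Q_{\l,N}=I+R^{(r)}_{\l,N},\qquad Q_{\l,N}(P+\l)^N=I+R^{(l)}_{\l,N},
\end{align*}
with $R^{(r)}_{\l,N},R^{(l)}_{\l,N}\in \Op S^{-\infty,-\infty}_{J,\mathrm{unif}}$.

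For part (i), continuity of $P^N:\mathcal{S}(\re^n)\to \mathcal{S}(\re^n)$ is standard, and injectivity follows because $P^N u=0$ with $u\in\mathcal{S}\subset L^2$ would force $u$ to be a zero eigenvector of $P$, excluded by Assumption \ref{abevass}. For surjectivity, I would use the coercivity $(Pu,u)\geq c\|\jap{x}^{-\m/2}u\|^2$ from Assumption \ref{potass}(ii) together with Lax--Milgram to solve $Pu=f$ in a suitable weighted $L^2$-space for $f\in\mathcal{S}$, and then upgrade to $u\in\mathcal{S}$ by writing $u=Q_{0,1}f-R^{(l)}_{0,1}u$ and invoking the smoothing nature of $R^{(l)}_{0,1}$. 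Iterating $N$ times yields the inverse $P^{-N}$ on $\mathcal{S}$; alternatively, Fredholmness of $I+R^{(l)}_{0,N}$ on $L^2$ (Proposition \ref{unifsymprop}(v)) combined with the no-eigenvalue hypothesis gives direct invertibility.

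For parts (ii) and (iii), interpreting $(P+\l)^{-N}$ via part (i) at $\l=0$ and via functional calculus for $\l>0$, the identity
\begin{align*}
(P+\l)^{-N}=Q_{\l,N}-(P+\l)^{-N}R^{(r)}_{\l,N}
\end{align*}
holds on $\mathcal{S}$. By Lemma \ref{bi-idelem}, the second term lies in $\Op S^{-\infty,-\infty}_{J,\mathrm{unif}}$ as soon as $\{(P+\l)^{-N}\}_{\l\in J}$ is uniformly bounded between some pair of weighted Sobolev spaces. To secure this, I would exploit the positivity of $V$: the form inequality $V\leq P+\l$ with L\"owner--Heinz monotonicity gives $\|V^{1/2}(P+\l)^{-1/2}\|_{L^2\to L^2}\leq 1$, and the pointwise bound $V\geq c\jap{x}^{-\m}$ converts this to $\|\jap{x}^{-\m/2}(P+\l)^{-1/2}\|_{L^2\to L^2}\lesssim 1$ uniformly in $\l$. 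Taking adjoints and composing yields a uniform bound on $\jap{x}^{-\m/2}(P+\l)^{-1}\jap{x}^{-\m/2}$, and a noncommutative bootstrap propagates this to higher $N$. Consequently $(P+\l)^{-N}\in \Op S_{J,\mathrm{unif}}((p+\l)^{-N},g)$, proving (ii), and part (iii) is then immediate from Proposition \ref{unifsymprop}(iv) together with $(p+\l)^{-N}\lesssim \jap{x}^{\m N}\jap{\x}^{-2mN}$.

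The decisive technical difficulty is the uniform-in-$\l$ boundedness of $(P+\l)^{-N}$ in weighted $L^2$, since both the naive spectral bound $\l^{-N}$ and the bound in Lemma \ref{Resxpres} diverge as $\l\downarrow 0$. The only lever available is the coercivity of $V$ through operator monotonicity, and pushing from $N=1$ to general $N$ demands careful noncommutative bookkeeping to prevent spurious $\l^{-1}$ factors arising from the failure of $(P+\l)^{-1/2}$ to commute with $\jap{x}^{-\m/2}$.
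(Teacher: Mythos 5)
Your parametrix construction, the reduction to a uniform bound in a weighted space, and the use of Lemma \ref{bi-idelem} are all in the right spirit, and several of your tactical choices are legitimate alternatives to the paper's. The operator-monotonicity argument ($V\leq P+\l$ gives $\|V^{1/2}(P+\l)^{-1/2}\|\leq 1$, hence a uniform bound on $\jap{x}^{-\m/2}(P+\l)^{-1}\jap{x}^{-\m/2}$) is a clean alternative to the paper's one-line Cauchy--Schwarz proof of Lemma \ref{mapFredpow}(v). Using Lax--Milgram to solve $Pu=f$ and then a parametrix bootstrap to upgrade $u$ to $\mathcal{S}$ is a reasonable substitute for the paper's Fredholm argument (Lemma \ref{mapFredpow}(iv) and (i)). And appealing to Assumption \ref{abevass} for injectivity works, although the paper is more careful to derive injectivity directly from $p_0,V\geq 0$, so that the exclusion of the zero eigenvalue is a consequence rather than an input (cf.\ Remark \ref{abevarem}(1)).

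However, there is a genuine gap in the step you yourself flag as decisive. You write $(P+\l)^{-N}=Q_{\l,N}-(P+\l)^{-N}R^{(r)}_{\l,N}$ and claim that the second term is in $\Op S^{-\infty,-\infty}_{J,\mathrm{unif}}$ ``as soon as $\{(P+\l)^{-N}\}_{\l\in J}$ is uniformly bounded between some pair of weighted Sobolev spaces.'' That inference is false: Lemma \ref{bi-idelem} requires the resolvent to be \emph{sandwiched} between two residual operators, $R_{2,\o}A_{\o}R_{1,\o}$, because the proof needs to absorb arbitrary powers of $\jap{x}$ and $\jap{D_x}$ on \emph{both} sides when estimating all seminorms of the Weyl symbol. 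With a residual factor only on the right, $(P+\l)^{-N}R^{(r)}_{\l,N}$ is bounded from $H^{-M,M}$ to one fixed $H^{s',t'}$, but not from $H^{-M,M}$ to $H^{M,-M}$ for all $M$, so one cannot conclude it is a pseudodifferential operator with residual symbol. The fix is to iterate once more: substituting $(P+\l)^{-1}=\Op(a_{\l})-R_{2,\l}(P+\l)^{-1}$ (from the left parametrix relation) into the remainder produces exactly the paper's identity
\begin{align*}
(P+\l)^{-1}=\Op(a_{\l})-\Op(a_{\l})R_{1,\l}+R_{2,\l}(P+\l)^{-1}R_{1,\l},
\end{align*}
where the last term is now sandwiched and Lemma \ref{bi-idelem} applies.

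A related inefficiency: you build a parametrix for $(P+\l)^N$ directly and therefore need a ``noncommutative bootstrap'' to propagate the uniform weighted bound from $N=1$ to general $N$, which you leave vague. The paper avoids this entirely by proving part (ii) for $N=1$ only and then obtaining the case $N\geq 2$ from the symbol composition formula, since $(P+\l)^{-N}=\bigl((P+\l)^{-1}\bigr)^N$ and the class $\Op S_{J,\mathrm{unif}}((p+\l)^{-1},g)$ is closed under composition with weight $(p+\l)^{-N}$. Doing $N=1$ first eliminates the need for any bootstrap and makes the argument for (ii) and (iii) essentially automatic once the sandwiched remainder identity is in place.
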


\begin{rem}\label{Riszthmrem}
$(1)$
Due to the open mapping theorem on Fr\'echet space and duality, the mapping $P^{N}$ is homeomorphism both on $\mathcal{S}(\re^n)$ and $\mathcal{S}'(\re^n)$.

$(2)$ In \cite[Theorem 5.3]{Y2}, it is shown that $\jap{x}^{-\a_1}P^{-N}\jap{x}^{-\a_2}$ is bounded for $\a_j\geq 0$ with $\a_1-\a_2\geq N\m$ when $P=-\Delta+V(x)$. This mapping properties also follow from our Theorem \ref{Rieszthm} $(iii)$ although some regularity assumptions on $V$ are relaxed in \cite{Y2}.
\end{rem}

The key of the proof of Theorem \ref{Rieszthm} is the following parametrix construction.

\begin{lem}\label{lemparaconpow}

\noindent$(i)$ $p+\l\in S_{J,\mathrm{unif}}(p+\l,g)$ and $(p+\l)^{-1}\in S_{J,\mathrm{unif}}((p+\l)^{-1},g)$.

\noindent$(ii)$ There exist $a_{\l}\sim \sum_{j=0}^{\infty}a_{\l,j}\in S_{J,\mathrm{unif}}((p+\l)^{-1},g)$ such that 
\begin{align*}
(p+\l)\# a_{\l}-1,\, a_{\l}\# (p+\l)-1\in S_{J,\mathrm{unif}}^{-\infty,-\infty}.
\end{align*}

\end{lem}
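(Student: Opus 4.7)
The plan is to prove part (i) by a direct symbolic estimate on $p+\l = p_0(\x) + V(x) + \l$, to deduce the estimate for $(p+\l)^{-1}$ via Fa\`a~di~Bruno, and then to construct the parametrix in (ii) by the standard Moyal iteration followed by a Borel summation inside the exotic symbol class. Uniformity in $\l \in J = [0,1]$ is automatic because every estimate uses only the monotonicity $p+\l \geq p$ together with Assumptions \ref{symbolass} and \ref{potass}, not the particular value of $\l$.

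For (i), the contribution of $V(x)+\l$ is immediate: Assumption \ref{potass}$(i)$ gives $|\pa_x^\a V| \lesssim \jap{x}^{-\m-|\a|}$, and the lower bound $V \gtrsim \jap{x}^{-\m}$ yields $|\pa_x^\a V| \lesssim (p+\l)\jap{x}^{-|\a|}$. For $p_0(\x)$ the analysis splits at $|\x|=1$. When $|\x|\geq 1$ one has $|\pa_\x^\b p_0(\x)| \lesssim \jap{\x}^{2m-|\b|}$ and $p+\l \geq p_0 \gtrsim \jap{\x}^{2m}$, which gives the required bound since $\jap{x}^{\m'|\b|/2}\geq 1$. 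When $|\x| \leq 1$ one uses that $p_0 \sim |\x|^{2k}$, so $|\pa_\x^\b p_0| \lesssim |\x|^{\max(2k-|\b|,0)}$; combined with $p+\l \gtrsim |\x|^{2k} + \jap{x}^{-\m}$, a short case analysis on whether $|\x|\geq \jap{x}^{-\m'/2}$ or $|\x| < \jap{x}^{-\m'/2}$ yields $|\pa_\x^\b p_0| \lesssim (p+\l)\jap{x}^{\m'|\b|/2}\jap{\x}^{-|\b|}$ (here the matching $\m'\cdot k = \m$ is essential). The inversion estimate then follows from Fa\`a~di~Bruno: $\pa_x^\a \pa_\x^\b (p+\l)^{-1}$ is a finite sum of terms of the form $(p+\l)^{-r-1}\prod_{i=1}^{r}\pa^{\gamma_i}(p+\l)$ with $\sum \gamma_i = (\a,\b)$, and substituting the bound just proven gives $(p+\l)^{-1}\in S_{J,\mathrm{unif}}((p+\l)^{-1}, g)$.

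For (ii), set $a_{\l,0} := (p+\l)^{-1}$ and $A_j := a_{\l,0}+\cdots+a_{\l,j}$. The composition formula (Proposition \ref{unifsymprop}$(i)$) gives $(p+\l)\# a_{\l,0} - 1 =: s_0 \in S_{J,\mathrm{unif}}(\s_g, g)$. Assuming inductively that $s_j := (p+\l)\# A_j - 1 \in S_{J,\mathrm{unif}}(\s_g^{j+1}, g)$, define $a_{\l,j+1} := -(p+\l)^{-1}s_j \in S_{J,\mathrm{unif}}((p+\l)^{-1}\s_g^{j+1}, g)$. Since $(p+\l)a_{\l,j+1} = -s_j$ and $(p+\l)\# a_{\l,j+1} - (p+\l)a_{\l,j+1} \in S_{J,\mathrm{unif}}(\s_g^{j+2}, g)$ by \eqref{compasyexp}, we obtain $s_{j+1}:= (p+\l)\# A_{j+1} - 1 \in S_{J,\mathrm{unif}}(\s_g^{j+2}, g)$, closing the induction. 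Borel summation (Proposition \ref{unifsymprop}$(iii)$) yields $a_\l \sim \sum_j a_{\l,j} \in S_{J,\mathrm{unif}}((p+\l)^{-1}, g)$ with $(p+\l)\# a_\l - 1 \in \bigcap_N S_{J,\mathrm{unif}}(\s_g^N, g)$. Since $\s_g = \jap{x}^{\m'/2-1}\jap{\x}^{-1}$ and $\m'/2-1 < 0$, this intersection is contained in $S_{J,\mathrm{unif}}^{-\infty,-\infty}$. The left parametrix $a_\l \# (p+\l) - 1 \in S_{J,\mathrm{unif}}^{-\infty,-\infty}$ is obtained by the symmetric iteration.

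The main technical obstacle is the symbolic estimate in (i) for $p+\l$ on the low-momentum region $\{|\x|\leq 1\}$ with $|x|$ large, where $p_0$ vanishes to order $2k$ at $\x=0$ and the polynomial decay of $\pa_\x^\b p_0$ there must be balanced against the lower bound supplied by $V(x)+\l$. This balancing relies essentially on $\m' = \m/k \in (0,2)$ (which is the slow-decay condition $\m < 2k$) and on the exponent matching $\m'\cdot k = \m$ built into the metric $g$. Once this estimate is established, the parametrix construction in (ii) is a standard Borel summation within the exotic calculus and raises no further difficulty.
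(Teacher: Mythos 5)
Your proof is correct and follows essentially the same approach as the paper's: a direct symbolic estimate for $p+\l$ (splitting $p_0$ into the regions $|\x|\le 1$ and $|\x|\ge 1$, with the crucial balancing $\m'=\m/k$ making the small-$\x$ case work), inversion by the standard Leibniz/Fa\`a~di~Bruno argument for elliptic weight-class symbols, and then a Neumann-style parametrix iteration closed by Borel summation. The only cosmetic differences are that the paper reduces to $\l=0$ and then splits on $|\b|\le 2k$ vs.\ $|\b|>2k$ (via the inequality $(|\x|^{2k}+\jap{x}^{-\m})^{-1}|\x|^{2k-|\b|}\lesssim(|\x|^{2k}+\jap{x}^{-\m})^{-|\b|/(2k)}$), whereas you split on $|\x|\gtrless\jap{x}^{-\m'/2}$ — both yield the identical estimate. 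One small point to spell out in your part (ii): after constructing a left parametrix $\tilde a_\l$ by the symmetric iteration, you should note that $\tilde a_\l = \tilde a_\l \#(p+\l)\# a_\l = a_\l$ modulo $S_{J,\mathrm{unif}}^{-\infty,-\infty}$, so a single $a_\l$ serves as both left and right parametrix; this is what the statement asserts.
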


\begin{proof}

\noindent$(i)$ Due to Remark \ref{remtemwei} $(1)$, it suffices to prove $(p+\l)\in S_{J,\mathrm{unif}}((p+\l),g)$. Moreover, the general case $\l\in I$ follows from the case $\l=0$ since $(p+\l)^{-1}\leq p^{-1}$ and $\pa_{x}^{\a}\pa_{\x}^{\b}\l=0$ when $(\a,\b)\neq (0,0)$. Therefore, it suffices to prove $|p(x,\x)^{-1}\pa_x^{\a}\pa_{\x}^{\b}p(x,\x)|\lesssim \jap{x}^{\frac{\m'}{2}|\b|-|\a|}\jap{\x}^{-|\b|}$.

The case $(\a,\b)=(0,0)$ is dealt with easily. 
If $\a\neq 0$, then $|\pa_x^{\a}\pa_{\x}^{\b}p(x,\x)|=|\pa_x^{\a}\pa_{\x}^{\b}V(x)|\lesssim \jap{x}^{-\m-|\a|}\jap{\x}^{-|\b|}$ since $\pa_x^{\a}p_0(\x)=0$ and $\pa_{\x}^{\b}V(x)=0$ for $\b\neq 0$. Using the inequality $p(x,\x)^{-1}\lesssim \jap{x}^{\m}$, we have
\begin{align*}
|p(x,\x)^{-1}\pa_{x}^{\a}\pa_{\x}^{\b}p(x,\x)|\lesssim \jap{x}^{\m}\cdot \jap{x}^{-\m-|\a|} \jap{\x}^{-|\b|}=\jap{x}^{-|\a|} \jap{\x}^{-|\b|}\leq \jap{x}^{\frac{\m'}{2}|\b|-|\a|} \jap{\x}^{-|\b|}.
\end{align*}

Next, we consider the case where both $\a=0$ and $\b\neq 0$ hold. We note $\pa_{\x}^{\b}p(x,\x)=\pa_{\x}^{\b}p_0(\x)$.
From Assumption \ref{symbolass}, we have $|\pa_{\x}^{\b}p_0(\x)|\lesssim |\x|^{2k-|\b|}$ for $|\x|\leq 1$ and $|\pa_{\x}^{\b}p_0(\x)|\lesssim \jap{\x}^{2m-|\b|}$ for $|\x|\geq 1$.
If $|\b|\leq 2k$, then
\begin{align*}
|p(x,\x)^{-1}\pa_{\x}^{\b}p(x,\x)|\lesssim& (p_0(\x)+\jap{x}^{-\m})\cdot |\pa_{\x}^{\b}p_0(\x)|\\
\lesssim& \begin{cases}(|\x|^{2k}+\jap{x}^{-\m})^{-1}\cdot |\x|^{2k-|\b|} \quad \text{for}\quad |\x|\leq 1 \\
\jap{\x}^{-|\b|} \quad \text{for}\quad |\x|\geq 1
\end{cases}\\
\lesssim&\jap{x}^{\frac{\m}{2k}|\b|}\jap{\x}^{-|\b|}=\jap{x}^{\frac{\m'}{2}|\b|}\jap{\x}^{-|\b|}.
\end{align*}
where we use the inequality $(|\x|^{2k}+\jap{x}^{-\m})^{-1}\cdot |\x|^{2k-|\b|}\lesssim (|\x|^{2k}+\jap{x}^{-\m})^{-\frac{|\b|}{2k}}$.
On the other hand, if $|\b|\geq 2k+1$, then
\begin{align*}
|p(x,\x)^{-1}\pa_{\x}^{\b}p(x,\x)|\lesssim& \begin{cases}(|\x|^{2k}+\jap{x}^{-\m})^{-1} \quad \text{for}\quad |\x|\leq 1\\
\jap{\x}^{-|\b|}\quad \text{for}\quad |\x|\geq 1
\end{cases}\\
\lesssim &\jap{x}^{\m}\jap{\x}^{-|\b|}\leq \jap{x}^{\frac{\m}{2k}|\b|}\jap{\x}^{-|\b|}=\jap{x}^{\frac{\m'}{2}|\b|}\jap{\x}^{-|\b|},
\end{align*}
where we use $|\pa_{\x}^{\b}p_0(\x)|\lesssim 1$ for $|\x|\leq 1$ in the first line instead. Thus we have proved $p\in S(p,g)$, which completes the proof.

\noindent$(ii)$ This part follows from a usual parametrix construction:
We recall $\s_g(x,\x)=\jap{x}^{\frac{\m'}{2}-1}\jap{\x}^{-1}$.
Setting $a_{\l,0}:=(p+\l)^{-1}\in S_{J,\mathrm{unif}}((p+\l)^{-1},g)$ (due to the part $(i)$), we have $(p+\l)\# a_{\l,0}-1 \in S_{J,\mathrm{unif}}(\s_g,g)$ by the composition formula (Proposition \ref{unifsymprop} $(i)$).
Inductively, we define $r_{\l,j+1}:=(p+\l)\# a_{\l,j}+r_{\l,j}\in S_{J,\mathrm{unif}}(\s_g^{j+1},g)$ and  $a_{\l,j+1}:=-(p+\l)^{-1}r_{\l,j+1} \in  S_{J,\mathrm{unif}}((p+\l)^{-1}\s_g^{j+1},g)$. Then it is easy to see that $(p+\l)\#(\sum_{j=0}^Ma_{\l,j})=1-r_{M+1}$. Due to the Borel summation theorem (Proposition \ref{unifsymprop} $(iii)$), there exists $a_{\l}\approx \sum_{j=0}^{\infty}a_{\l,j}\in S_{J,\mathrm{unif}}((p+\l)^{-1},g)$ such that $(p+\l)\# a_{\l}-1\in S_{J,\mathrm{unif}}^{-\infty,-\infty}$. Similarly, we can find $\tilde{a}_{\l}\in S_{J,\mathrm{unif}}((p+\l)^{-1},g)$ such that $\tilde{a}_{\l}\#(p+\l)-1\in S_{J,\mathrm{unif}}^{-\infty,-\infty}$. Then $\tilde{a}_{\l}=\tilde{a}_{\l}\# 1=\tilde{a}_{\l}\# (p+\l)\# a_{\l}=a_{\l}$ modulo $S_{J,\mathrm{unif}}^{-\infty,-\infty}$, which implies $a_{\l}\#(p+\l)-1\in S_{J,\mathrm{unif}}^{-\infty,-\infty}$. This completes the proof

\end{proof}

\begin{lem}\label{mapFredpow}
Let $\l\in J=[0,1]$.

\noindent$(i)$ Let $u\in \mathcal{S}'(\re^n)$ satisfy $(P+\l)u\in \mathcal{S}(\re^n)$. Then we have $u\in \mathcal{S}(\re^n)$.

\noindent$(ii)$ $P+\l:\mathcal{S}'(\re^n)\to \mathcal{S}'(\re^n)$ is injective.

\noindent$(iii)$ $W:=\{(P+\l)u\in L^{2,\frac{\m}{2}}\mid u\in\mathcal{S}(\re^n)\}$ is dense in $L^{2,\frac{\m}{2}}$.

\noindent$(iv)$ 
We set $D_{\m}(P+\l):=\{u\in H^{2m,-\frac{\m}{2}}\mid (P+\l)u\in L^{2,\frac{\m}{2}}\}$.
Then the map
\begin{align*}
P+\l:D_{\m}(P+\l)\to L^{2,\frac{\m}{2}}
\end{align*}
is invertible.

\noindent$(v)$ $\sup_{\l\in J}\|(P+\l)^{-1}\|_{B(L^{2,\frac{\m}{2}},L^{2,-\frac{\m}{2}})}<\infty$.

\end{lem}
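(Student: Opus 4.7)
The plan is to run everything through the parametrix $A_\lambda = \Op(a_\lambda)$ from Lemma \ref{lemparaconpow}, for which both $A_\lambda(P+\lambda) = I + R'_\lambda$ and $(P+\lambda)A_\lambda = I + R_\lambda$ with $R_\lambda, R'_\lambda \in \Op S_{J,\mathrm{unif}}^{-\infty,-\infty}$, combined with positivity of $P+\lambda$ and, for $\lambda > 0$, the $L^2$-spectral theorem. Part (i) is standard elliptic regularity: applying the left parametrix to $(P+\lambda)u = f \in \mathcal{S}$ gives $u = A_\lambda f - R'_\lambda u$, and both summands lie in $\mathcal{S}$ since $A_\lambda$ preserves $\mathcal{S}$ and residual operators map $\mathcal{S}'$ into $\mathcal{S}$. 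Part (ii) follows from (i) by positivity: any $u \in \mathcal{S}'$ with $(P+\lambda)u = 0$ already lies in $\mathcal{S}$, and
\[
0 = ((P+\lambda)u, u) = (p_0(D)u, u) + (Vu, u) + \lambda\|u\|^2
\]
forces $u = 0$ because $V \geq C_1\jap{x}^{-\m} > 0$ pointwise. For (iii), by Hahn--Banach it suffices to show any $h \in L^{2,-\m/2}$ annihilating $W$ vanishes; testing $(h, (P+\lambda)u)_{L^2} = 0$ against $u \in \mathcal{S}$ and using formal self-adjointness of $P+\lambda$ yields $(P+\lambda)\bar h = 0$ in $\mathcal{S}'$, so $\bar h = 0$ by (ii).

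For (iv) and (v) I will split the range of $\lambda$. When $\lambda > 0$, the $L^2$-spectral theorem provides $(P+\lambda)^{-1}\colon L^2 \to H^{2m}$, and restricting to $L^{2,\m/2} \subset L^2$ lands in $H^{2m} \subset H^{2m,-\m/2}$ with image in $D_\m(P+\lambda)$, since $(P+\lambda)(\cdot) = f \in L^{2,\m/2}$; uniqueness is from (ii). To prove (v) on $(0,1]$ I run a compactness-uniqueness argument: if the bound failed, choose $\lambda_n \in (0,1]$, $u_n \in D_\m(P+\lambda_n)$ and $f_n = (P+\lambda_n)u_n$ with $\|u_n\|_{L^{2,-\m/2}} = 1$ and $\|f_n\|_{L^{2,\m/2}} \to 0$. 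The parametrix identity $u_n = A_{\lambda_n} f_n - R'_{\lambda_n} u_n$ shows that the first term tends to $0$ in $L^{2,-\m/2}$ (since $(p+\lambda)^{-1} \lesssim \jap{x}^\m$ forces $\{A_\lambda\} \subset B(L^{2,\m/2}, L^{2,-\m/2})$ uniformly via Proposition \ref{unifsymprop}(iv)), while the second is pre-compact in $L^{2,-\m/2}$ because $\{R'_\lambda\}$ maps $L^{2,-\m/2}$ uniformly into $H^{1,1}$, which embeds compactly into $L^{2,-\m/2}$ by weighted Rellich. Extracting a subsequence with $\lambda_n \to \lambda_* \in [0,1]$ and $u_n \to u_*$ strongly in $L^{2,-\m/2}$, passing to the limit of $(P+\lambda_n)u_n = f_n$ in $\mathcal{S}'$ gives $(P+\lambda_*)u_* = 0$, whence $u_* = 0$ by (ii), contradicting $\|u_*\|_{L^{2,-\m/2}} = 1$.

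Finally, (iv) at $\lambda = 0$ is handled by a weak-limit argument using the just-proved bound on $(0,1]$: for $f \in L^{2,\m/2}$, the family $u_\e := (P+\e)^{-1}f$ is uniformly bounded in $L^{2,-\m/2}$, hence has a weakly convergent subsequence with limit $u$; passing to the limit in $(P+\e)u_\e = f$ yields $Pu = f$ in $\mathcal{S}'$, the parametrix identity $u = A_0 f - R'_0 u$ promotes $u$ to $H^{2m,-\m/2}$, and $Pu = f \in L^{2,\m/2}$ places $u$ in $D_\m(P)$; injectivity is from (ii), and (v) at $\lambda = 0$ then follows from lower semicontinuity of the norm under weak convergence. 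I expect the main obstacle to be the compactness-uniqueness step in (v): it requires verifying that the symbol uniformities in $\lambda$ packaged into the notation $S_{J,\mathrm{unif}}$ genuinely yield uniform mapping and compactness bounds on the families $\{A_\lambda, R'_\lambda\}_{\lambda \in J}$, and that $R'_\lambda$ depends continuously enough on $\lambda$ for the Rellich compactness step to survive passage to a subsequential limit in $\lambda$.
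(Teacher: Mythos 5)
Your proof of parts (i)--(iii) matches the paper's in substance: both rely on the parametrix from Lemma \ref{lemparaconpow} for regularity, on positivity for injectivity, and on an orthogonality/Hahn--Banach argument for density. (Your (ii) uses $V\geq C_1\jap{x}^{-\m}>0$; the paper uses $p_0\geq 0$ with $p_0^{-1}(0)=\{0\}$ -- both work.)

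Where you genuinely diverge is in (iv) and (v), and there the paper's route is markedly shorter. For (v) the paper needs only one line: since $p_0(D_x),\l\geq 0$ and $V\geq C_1\jap{x}^{-\m}$,
\begin{align*}
\|\jap{x}^{-\m/2}u\|^2\lesssim (u,Vu)\leq (u,(P+\l)u)\leq \|\jap{x}^{-\m/2}u\|\,\|\jap{x}^{\m/2}(P+\l)u\|,
\end{align*}
hence $\|u\|_{L^{2,-\m/2}}\lesssim\|(P+\l)u\|_{L^{2,\m/2}}$ uniformly in $\l\in[0,1]$. This replaces your entire compactness--uniqueness scheme, and in particular removes the worry you flagged at the end about continuity of $R'_\l$ in $\l$ (which, incidentally, you do not actually need: uniform bounds in $\l$ plus the compact embedding $H^{1,1}\hookrightarrow L^{2,-\m/2}$ suffice for the subsequence extraction, so your argument is correct, just heavy). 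For (iv) the paper argues once for all $\l\in J$: the parametrix identities exhibit $P+\l:D_\m(P+\l)\to L^{2,\m/2}$ as Fredholm (remainders are compact by Proposition \ref{unifsymprop}(v)), so the range is closed; density from (iii) then gives surjectivity, and (ii) gives injectivity. Your split into $\l>0$ (spectral theorem) and $\l=0$ (weak limit using the $(0,1]$ bound) is valid and avoids Fredholm theory, which is a legitimate trade-off, but given that the quadratic-form estimate in (v) is available -- and is the natural thing to try once $V$ is positive -- the Fredholm route is both shorter and treats $\l=0$ on the same footing as $\l>0$. You should keep the a priori estimate from $(u,Vu)$ in your toolkit; it is the crux of the whole lemma.
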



\begin{proof}

\noindent$(i)$
Let $a_{\l}$ be the parametrix of $p+\l$ as in Lemma \ref{lemparaconpow} and set $R_{1,\l}:=\Op(a_{\l})(P+\l)-I \in \Op S_{J,\mathrm{unif}}^{-\infty,-\infty}$. If $u\in\mathcal{S}'(\re^n)$ satisfies $(P+\l)u\in \mathcal{S}(\re^n)$, then $u=\Op(a_{\l})(P+\l)^Nu-R_{1,\l}u\in\mathcal{S}(\re^n)$ since $\Op(a_{\l})(P+\l)u\in\mathcal{S}(\re^n)$ by the assumption and $R_{1,\l}u\in\mathcal{S}(\re^n)$ due to $R_{1,\l}\in \Op S_{J,\mathrm{unif}}^{-\infty,-\infty}$.

\noindent$(ii)$ Suppose that $u\in\mathcal{S}(\re^n)$ satisfies $(P+\l)u=0$. By the part $(i)$, we have $u\in\mathcal{S}(\re^n)$. We recall that $P=p_0(D_x)+V(x)$ and $V(x)\geq 0$ by Assumption \ref{potass}. Since $\l\geq 0$, we have
\begin{align*}
0=(u,(P+\l)u)\geq \frac{1}{(2\pi )^n}\int_{\re^n}p_0(\x)| \hat{u}(\x)|^2d\x.
\end{align*}
where $\hat{u}$ denotes the Fourier transform of $u$.
Due to Assumption \ref{symbolass}, it turns out that $p_0(\x)\geq 0$ and that $\x=0$ is the only zero of $p_0$. Thus we conclude $u=0$.

\noindent$(iii)$ Now suppose that $f\in L^{2,\frac{\m}{2}}$ is orthogonal to $W$ with respect to the inner product in $L^{2,\frac{\m}{2}}(\re^n)$, that is
\begin{align*}
(\jap{x}^{\frac{\m}{2}}f,\jap{x}^{\frac{\m}{2}}(P+\l)u)=0
\end{align*}
for all $u\in\mathcal{S}(\re^n)$. Then we have $(P+\l)(\jap{x}^{\m }f)=0$ in the distributional sense. Then the part $(ii)$ implies $f=0$. This proves that $W$ is dense in $L^{2,\frac{\m}{2}}$.

\noindent$(iv)$ By the part $(ii)$, it suffices to prove that the map is surjective.
 We regard $D_{\m}(P+\l)$ as a Banach space equipped with its graph norm. We set $R_{1,\l}:=\Op(a_{\l})(P+\l)-I \in \Op S_{J,\mathrm{unif}}^{-\infty,-\infty}$ and $R_{2,\l}:=(P+\l)^N\Op(a_{\l})-I \in \Op S_{J,\mathrm{unif}}^{-\infty,-\infty}$, where $a_{\l}$ is as in Lemma \ref{lemparaconpow}. Then $R_{1,\l},R_{2,\l}$ are compact operators from $D_{\m}(P+\l)$ to $L^{2,\frac{\m}{2}}$ essentially due to Proposition \ref{unifsymprop} $(v)$. Moreover, $\Op(a_{\l})$ is a bounded linear operator from $L^{2,\frac{\m}{2}}$ to $D_{\m}(P+\l)$ due to these identities and the mapping property of $\Op(a_{\l})$ (see Proposition \ref{unifsymprop} $(iv)$). Then the map $(P+\l):D_{\m}(P+\l)\to L^{2,\frac{\m}{2}}$ is a Fredholm operator by \cite[Corollary 19.1.9]{Ho} and in particular, its range is closed.
On the other hand, the part $(iii)$ implies that this map has a dense range. Hence the mapping $P+\l$ is surjective. 

\noindent$(v)$ We have $(u,(P+\l)u)\geq (u,Vu)\gtrsim \|\jap{x}^{-\frac{\m}{2}}u\|^2$ and hence $\|\jap{x}^{-\frac{\m}{2}}u\|^2\leq \|\jap{x}^{-\frac{\m}{2}}u\|\|\jap{x}^{\frac{\m}{2}}(P+\l)u\|$, which implies $\sup_{\l\in J}\|(P+\l)^{-1}\|_{B(L^{2,\frac{\m}{2}},L^{2,-\frac{\m}{2}})}<\infty$.

\end{proof}

\begin{proof}[Proof of Theorem \ref{Rieszthm}]

\noindent$(i)$ By Lemma \ref{mapFredpow} $(ii)$ and an induction argument, it turns out that $P^N$ is injective on $\mathcal{S}(\re^n)$.
Thus it suffices to prove that $P^N$ is surjective on $\mathcal{S}(\re^n)$. Let $f\in\mathcal{S}(\re^n)$. Due to Lemma \ref{mapFredpow} $(iv)$, there exists $f_1\in D_{\m}(P)$ such that $f=Pf_1$. By Lemma \ref{mapFredpow} $(i)$, we have $f_1\in \mathcal{S}(\re^n)$. Repeating this argument, we obtain $u\in\mathcal{S}(\re^n)$ such that $f=P^Nu$.

\noindent$(ii)$ The claim for $N\geq 2$ directly follows from the case $N=1$ and the composition formula (Proposition \ref{unifsymprop} $(i)$). So we may assume $N=1$. We set
\begin{align*}
R_{1,\l}:=(P+\l) \Op(a_{\l})-I\in \Op S_{J,\mathrm{unif}}^{-\infty,-\infty},\,\, R_{2,\l}:= \Op(a_{\l})(P+\l)-I\in \Op S_{J,\mathrm{unif}}^{-\infty,-\infty},
\end{align*}
where $a_{\l}\in S_{J,\mathrm{unif}}((p+\l)^{-1},g)$ is as in Lemma \ref{lemparaconpow} $(ii)$. Then we have
\begin{align*}
(P+\l)^{-1}=\Op(a_{\l})-\Op(a_{\l})R_{1,\l}+R_{2,\l}(P+\l)^{-1}R_{1,\l}.
\end{align*}
Since $a_{\l}\in S_{J,\mathrm{unif}}((p+\l)^{-1},g)$ and $R_{1,\l}\in \Op S_{J,\mathrm{unif}}^{-\infty,-\infty}$, we have $\Op(a_{\l}),\Op(a_{\l})R_{1,\l}\in \Op S_{J,\mathrm{unif}}((p+\l)^{-1},g)$ by the composition formula (Proposition \ref{unifsymprop} $(iii)$). On the other hand, we $R_{2,\l}(P+\l)^{-1}R_{1,\l}\in \Op S_{J,\mathrm{unif}}^{-\infty,-\infty}$ due to Lemma \ref{bi-idelem} and Lemma \ref{mapFredpow} $(v)$. Therefore, we conclude $(P+\l)^{-1}\in \Op S_{J,\mathrm{unif}}((p+\l)^{-1},g)$.

The part $(iii)$ follows from the part $(ii)$, the uniform bound $(p+\l)^{-N}\lesssim \jap{x}^{\m N}\jap{\x}^{-2mN}$, and Proposition \ref{unifsymprop} $(iv)$.

\end{proof}

Now we have prepared to prove \eqref{Ag1}.

\begin{proof}[Proof of \eqref{Ag1}] Fix $L\in \re$.
Let $N\in\mathbb{Z}_{>0}$ be large enough and set $M:=N\m+L>0$. 
First, we observe that $\jap{x}^L(P+\l)^{-N}\jap{x}^{-M}$ is uniformly bounded in $B(L^2(\re^n))$ with respect to $\l\in (0,1]$ due to Theorem \ref{Rieszthm} $(iii)$ (with $s=0$ and $t=M$). 
Then, we have $\|\jap{x}^{M}\jap{\l^{\e}x}^{-M}\|\lesssim \l^{-M\e }$ and hence
\begin{align*}
\|\jap{x}^L(P/\l+1)^{-N}\jap{\l^{\e}x}^{-M}\|_{B(L^2(\re^n))}\lesssim \l^{N-\e M}.
\end{align*}
By Lemma \ref{Resxpres} $(ii)$, for $L\in \re$, we have $\|\jap{x}^L\g_K(P/\l)  \jap{x}^{-L}\|\lesssim \l^{-|L|+1}$, where we set $\g_K(s)=(s+1)^K\g(s)$. Then we have
\begin{align*}
\|\jap{x}^L\g(P/\l)\jap{\l^{\e}x}^{-M}\|\leq&\|\jap{x}^L\g_K(P/\l)\jap{x}^{-L}\| \|\jap{x}^L(P/\l+1)^{-N}\jap{\l^{\e}x}^{-M}\|\\
\lesssim&\l^{N+1-\e M-|L|}=\l^{(1-\e\m)N+1-\e L- |L| }.
\end{align*}
Thus we conclude
\begin{align*}
\left\|\jap{x}^L\g(P/\l) 1_{\{|x|\in [0,\l^{-\e}]\}  }\right\|\lesssim\|\jap{x}^L\g(P/\l)\jap{\l^{\e}x}^{-M}\|\lesssim\l^{(1-\e\m)N+1-\e L- |L| }.
\end{align*}
Since $1-\e\m>0$ and since we can take $N\in \mathbb{Z}_{>0}$ arbitrary large, this inequality implies $\left\|\jap{x}^L\g(P/\l) 1_{\{|x|\in [0,\l^{-\e}]\}  }\right\|=O(\l^{\infty})$, which completes the proof.

\end{proof}

\subsection{Disjoint support property}

Next, we deal with a larger region ${\{\l^{-\e}\leq |x|\leq c\l^{-1/\m}\}}$ and prove a weaker Agmon estimate there. Namely, we prove
\begin{align}\label{Ag2}
\|\jap{x}^L\g(P/\l)1_{\{\l^{-\e}\leq |x|\leq c\l^{-1/\m}\}}\|=O(\l^{\infty})\quad \text{as}\quad \l\to 0,
\end{align}
where we recall that $L\in \re$, $\g\in C_c^{\infty}(\re)$, $0<\e<1/\m$, and $c>0$ is sufficiently small. We remark that most of the following argument does not need the cut-off into the region $\{\l^{-\e}\leq |x|\}$. The only part to use it is the remainder estimate \eqref{xlowbduse} of a parametrix constructed below.

First, we construct a smooth cut-off function supported near $\{\l^{-\e}\leq |x|\leq c\l^{-\frac{1}{\m}}\}$.

\begin{lem}\label{cutoffconst}
Fix $c>0$. Then there exists $\chi_{\l}\in C_c^{\infty}(\re^n;[0,1])$ such that
\begin{align*}
&\chi_{\l}(x)=1\quad \text{for}\quad \l^{-\e}\leq |x|\leq c\l^{-\frac{1}{\m}},\\
&|\pa_x^{\a}\chi_{\l}(x)|\lesssim \jap{x}^{-|\a|},\quad \supp \chi_{\l}\subset \left\{x\in \re^n\mid \frac{1}{2}\l^{-\e}\leq |x|\leq 2c\l^{-\frac{1}{\m}} \right\},
\end{align*}
where the first estimates are uniform in $0<\l\leq 1$.

\end{lem}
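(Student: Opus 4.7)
The construction is the standard product of two one-dimensional cutoffs, one cutting in at the inner boundary $|x|\sim\l^{-\e}$ and one cutting out at the outer boundary $|x|\sim c\l^{-1/\m}$. Fix auxiliary functions $\f_1,\f_2\in C^\infty(\re;[0,1])$ with $\f_1\equiv 1$ on $[1,\infty)$, $\f_1\equiv 0$ on $(-\infty,1/2]$, $\f_2\equiv 1$ on $(-\infty,1]$, and $\f_2\equiv 0$ on $[2,\infty)$, and set
\[
\chi_\l(x)\;:=\;\f_1(\l^{\e}|x|)\,\f_2(c^{-1}\l^{1/\m}|x|).
\]
Since $\l\leq 1$ and $\e>0$ imply $\tfrac12\l^{-\e}\geq \tfrac12$, the first factor vanishes on $\{|x|<\tfrac12\}$, so $\chi_\l$ is identically zero near the origin and is smooth there in spite of the non-smoothness of $|x|$; hence $\chi_\l\in C_c^\infty(\re^n;[0,1])$.

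The value and support properties are immediate from the choice of $\f_1,\f_2$. On $\{\l^{-\e}\leq |x|\leq c\l^{-1/\m}\}$ both factors equal $1$; and on $\supp\chi_\l$ one must have $|x|\geq\tfrac12\l^{-\e}$ and $|x|\leq 2c\l^{-1/\m}$, giving exactly the prescribed support condition. In the degenerate regime where the inner bound exceeds the outer one (which, since $1/\m-\e>0$, can occur only for $\l$ close to $1$), one simply sets $\chi_\l:=0$; the assertions of the lemma are then vacuous.

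Only the uniform derivative estimate requires a short calculation. For any $g\in C^\infty(\re)$ with $g',g'',\hdots$ bounded and compactly supported, and $\m>0$, the chain rule expresses $\pa_x^\a\,g(\m|x|)$ as a finite sum of terms of the form
\[
g^{(j)}(\m|x|)\,\m^{j}\,|x|^{j-|\a|}\,P_{\a,j}(x/|x|),\qquad 1\leq j\leq |\a|,
\]
with $P_{\a,j}$ bounded. On the support of $g^{(j)}$ with $j\geq 1$ one has $\m|x|\sim 1$, so $\m^{j}|x|^{j-|\a|}=(\m|x|)^{j}|x|^{-|\a|}\sim|x|^{-|\a|}$. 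Applying this with $\m=\l^{\e}$ to $\f_1(\l^{\e}|x|)$ and with $\m=c^{-1}\l^{1/\m}$ to $\f_2(c^{-1}\l^{1/\m}|x|)$, and combining via the Leibniz rule, yields $|\pa_x^\a\chi_\l(x)|\lesssim |x|^{-|\a|}$ on $\supp\chi_\l$ with constants independent of $\l$. Since $\supp\chi_\l\subset\{|x|\geq 1/2\}$, one has $|x|\sim\jap{x}$ there, and the bound $|\pa_x^\a\chi_\l(x)|\lesssim\jap{x}^{-|\a|}$ follows. There is no substantial obstacle anywhere; the only point worth flagging is the cancellation $\m\sim|x|^{-1}$ on the transition annuli, which is what makes the $\l$-dependence disappear.
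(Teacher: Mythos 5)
Your construction is correct and is essentially the paper's own: a product of an inner and an outer rescaled cutoff, with the derivative bound coming from the observation that on the transition annuli the scale factor is comparable to $|x|^{-1}$. The only cosmetic difference is that the paper dilates a fixed $\chi\in C_c^\infty(\re^n)$ via $x\mapsto\chi(x/R)$ rather than composing a one-dimensional profile with $|x|$; your extra remark that $\chi_\l$ vanishes near the origin (so smoothness survives despite $|x|$) and that the degenerate regime is harmless correctly handles the one point your variant introduces.
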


\begin{proof}

Let $\chi,\tilde{\chi}\in C_c^{\infty}(\re^n)$ such that $\chi(x)=1$ for $|x|\leq 1/2$ and $\chi(x)=0$ for $|x|\geq 1$. We define $\chi_{\l}(x)=\chi((2c)^{-1}\l^{\frac{1}{\m}}x)(1-\chi(\l^{\e}x))$. Then $\chi_{\l}$ satisfies the support properties as above. (Note: $|\pa_{x}^{\a}(\chi(x/R))|\lesssim \jap{x}^{-|\a|}$ uniformly in $R\gtrsim 1$).

\end{proof}

To prove \eqref{Ag2}, we construct a parametrix of $\g(P/\l)\chi_{\l}$ in the symbol class $S(p+\l,g)$. For $Z>0$, we define
\begin{align*}
D_Z:=\{(\l,z)\in (0,1]\times \mathbb{C}\setminus \re\mid |z|\leq Z \}.
\end{align*}




\begin{lem}\label{parapsymest}
Fix $Z>0$.

\noindent$(i)$ $p-\l z\in S_{D_Z,\mathrm{unif}}((p+\l),g)$.

\noindent$(ii)$
If $c_1>0$ is sufficiently small, then
\begin{align}\label{w/zsymest}
|(p-\l z)^{-1}\pa_x^{\a}\pa_{\x}^{\b}(p(x,\x)-\l z)|\leq&C_{\a\b}\jap{x}^{-|\a|+\frac{\m}{2}|\b|}\jap{\x}^{-|\b|}.
\end{align}
uniformly in $(\l,z)\in D_Z$ and $|x|\leq 2c_1\l^{-\frac{1}{\m}}$. In particular, $(p-\l z)^{-1}\chi_{\l}\in S_{D_Z,\mathrm{unif}}((p+\l)^{-1},g)$ if $c>0$ is sufficiently small.

\end{lem}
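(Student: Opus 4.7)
The plan is to reduce both statements to Lemma \ref{lemparaconpow}(i), which gives $p\in S_{J,\mathrm{unif}}(p,g)$, by accounting for the shift $-\lambda z$ in part (i) and for the denominator $(p-\lambda z)^{-1}$ in part (ii).

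For part (i), observe that $\partial_x^\alpha\partial_\xi^\beta(p-\lambda z)=\partial_x^\alpha\partial_\xi^\beta p$ whenever $(\alpha,\beta)\neq(0,0)$, so Lemma \ref{lemparaconpow}(i) bounds this by $C_{\alpha\beta}\,p\,\jap{x}^{\frac{\mu'}{2}|\beta|-|\alpha|}\jap{\xi}^{-|\beta|}\leq C_{\alpha\beta}(p+\lambda)\jap{x}^{\frac{\mu'}{2}|\beta|-|\alpha|}\jap{\xi}^{-|\beta|}$. For the base case, $|p-\lambda z|\leq p+Z\lambda\leq \max(1,Z)(p+\lambda)$. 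Since all constants depend only on $Z$, one obtains $p-\lambda z\in S_{D_Z,\mathrm{unif}}(p+\lambda,g)$.

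For part (ii), the key is a pointwise lower bound on $|p-\lambda z|$ in the confinement region. Assumption \ref{potass}(ii) together with $|x|\leq 2c_1\lambda^{-1/\mu}$ and $\lambda\in (0,1]$ gives, after a short computation,
\begin{equation*}
V(x)\geq C_1\jap{x}^{-\mu}\geq C_1\,2^{-\mu/2}\min\bigl(1,(2c_1)^{-\mu}\lambda\bigr).
\end{equation*}
Choosing $c_1$ small enough (depending on $Z$), one has $V(x)\geq 4Z\lambda$ in the principal regime $\lambda\leq (2c_1)^\mu$; hence $p(x,\xi)\geq V(x)\geq 4Z\lambda\geq 4\lambda|z|$ and
\begin{equation*}
|p-\lambda z|\geq p-\lambda|z|\geq \tfrac{3}{4}p\gtrsim p+\lambda.
\end{equation*}
Combining this with Lemma \ref{lemparaconpow}(i) yields $|(p-\lambda z)^{-1}\partial_x^\alpha\partial_\xi^\beta(p-\lambda z)|\lesssim |p^{-1}\partial_x^\alpha\partial_\xi^\beta p|\lesssim \jap{x}^{\frac{\mu'}{2}|\beta|-|\alpha|}\jap{\xi}^{-|\beta|}\leq \jap{x}^{\frac{\mu}{2}|\beta|-|\alpha|}\jap{\xi}^{-|\beta|}$, which is \eqref{w/zsymest}.

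The inclusion $(p-\lambda z)^{-1}\chi_\lambda\in S_{D_Z,\mathrm{unif}}((p+\lambda)^{-1},g)$ follows by taking $c\leq c_1$: then $\supp\chi_\lambda\subset\{|x|\leq 2c_1\lambda^{-1/\mu}\}$, on which \eqref{w/zsymest} applies and $(p-\lambda z)^{-1}\sim (p+\lambda)^{-1}$. Differentiating via Leibniz and Faa di Bruno, the derivatives $\partial^\gamma\chi_\lambda$ contribute $\jap{x}^{-|\gamma|}$, while $\partial^\delta(p-\lambda z)^{-1}$ is a finite sum of terms $(p-\lambda z)^{-k-1}\prod_{j=1}^k\partial^{\delta_j}p$ whose $k$ denominators pair with numerators via the sharper $\frac{\mu'}{2}$-version of \eqref{w/zsymest} and whose leftover factor is bounded by $(p+\lambda)^{-1}$. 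The main obstacle I anticipate is verifying the lower bound $V(x)\geq 4Z\lambda$ uniformly for $\lambda\in (0,1]$: it is immediate in the principal regime $\lambda\leq (2c_1)^\mu$, while in the complementary regime $\lambda>(2c_1)^\mu$ the region $\{|x|\leq 2c_1\lambda^{-1/\mu}\}$ shrinks to a fixed bounded set and must be handled separately, facilitated by the fact that for $c$ small, $\chi_\lambda$ vanishes there so that the final symbolic calculus remains clean.
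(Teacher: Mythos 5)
Your proof is correct and follows the same route as the paper's: part (i) reduces directly to Lemma \ref{lemparaconpow}(i), and part (ii) rests on establishing the pointwise lower bound $|p-\lambda z|\gtrsim p+\lambda$ on $\{|x|\leq 2c_1\lambda^{-1/\mu}\}$ for $c_1$ small (the paper via $|\lambda z|\leq\tfrac{C_1}{2}\jap{x}^{-\mu}$, you via $p\geq V\geq 4Z\lambda\geq 4\lambda|z|$), after which the Fa\`a di Bruno expansion together with the support of $\chi_\lambda$ closes the argument. You are in fact slightly more careful than the printed proof in two respects, both of which you resolve correctly: you note that the exponent $\mu/2$ appearing in \eqref{w/zsymest} must be sharpened to $\mu'/2=\mu/(2k)$ to actually give membership in the $g$-calibrated class $S_{D_Z,\mathrm{unif}}((p+\lambda)^{-1},g)$ (and that Lemma \ref{lemparaconpow}(i) does in fact yield this sharper bound), and you flag that the inequality $|\lambda z|\leq\tfrac{C_1}{2}\jap{x}^{-\mu}$ cannot hold uniformly over all $\lambda\in(0,1]$ when $Z$ is large, while correctly observing that this is harmless for the final conclusion because Lemma \ref{cutoffconst} forces $\chi_\lambda\equiv 0$ once $\lambda\gtrsim c^{\mu/(1-\epsilon\mu)}$.
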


\begin{rem}
The function $(p-\l z)^{-1}$ itself belongs to $S(p+\l,g)$ for fixed $z$, however, we do not have $(p-\l z)^{-1}\in S_{D_Z,\mathrm{unif}}((p+\l),g)$.
\end{rem}

\begin{proof}

\noindent$(i)$ This directly follows from Lemma \ref{lemparaconpow} $(i)$.

\noindent$(ii)$
We recall $C_1>0$ is a constant such that $V(x)\geq C_1\jap{x}^{-\m}$. Taking $c_1>0$ small enough, we have $|\l z|\leq \frac{C_1}{2}\jap{x}^{-\m}$ for $(\l,z)\in D_Z$ and $|x|\leq 2c_1\l^{-\frac{1}{\m}}$. This implies $|(p(x,\x)-\l z)^{-1}|\leq (p(x,\x)-\frac{C_1}{2}\jap{x}^{-\m})^{-1}\lesssim (p(x,\x)+\l)^{-1}$ uniformly in such $(\l,z,x)$.

The estimates for $(\a,\b)\neq (0,0)$ follows from the estimates Lemma \ref{lemparaconpow} $(i)$ and the formula
\begin{align*}
\pa_{x}^{\a}\pa_{\x}^{\b}(p(x,\x)-\l z )^{-1}=\sum_{\substack{1\leq \ell \leq |\a|+|\b|\\ \a_1+\a_2+\hdots+\a_{\ell}=\a,\\ \b_1+\b_2+\hdots+\b_{\ell}=\b, \\ (\a_j,\b_j)\neq (0,0) }} c_{\a_1,\hdots\b_{\ell}}(p(x,\x)-\l z )^{-1}\prod_{k=1}^{\ell}\frac{\pa_{x}^{\a_{k}}\pa_{\x}^{\b_k}p(x,\x)}{p(x,\x)-\l z}.
\end{align*}
Moreover, Lemma \ref{cutoffconst} and \eqref{w/zsymest} imply $(p-\l z)^{-1}\chi_{\l}\in S_{D_Z,\mathrm{unif}}((p+\l)^{-1},g)$.

\end{proof}

Next, we construct a uniform parametrix of $(P-\l z)^{-1}\chi_{\l}(x)$.

\begin{lem}\label{uncparalarge}
Let $N,Z>0$.
Then there exists $a_{\l,z}\in S_{D_Z,\mathrm{unif}}((p+\l)^{-1},g)$ such that $(p-\l z)\# a_{\l,z}-\chi_{\l}\in S_{D_Z,\mathrm{unif}}^{-N,-N}(g)$ and $a_{\l,z}$ has the form
\begin{align}\label{parainvpow}
a_{\l,z}(x,\x)=\sum_{j=1}^{J_N}b_{\l,j}(x,\x)(p(x,\x)-\l z)^{-j},
\end{align}
where $b_{\l,j}\in C^{\infty}(\re^{2n})$ are independent of $z$ and $\supp b_{\l,z}\subset \supp \chi_{\l}$.

\end{lem}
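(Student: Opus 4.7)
The plan is to construct the parametrix by a standard iterative scheme, exploiting that on $\supp\chi_\l$ we have uniform control on $(p-\l z)^{-1}$ from Lemma \ref{parapsymest} $(ii)$. I would begin with the leading ansatz
\[
a_{\l,z}^{(0)}(x,\x)\coloneqq \chi_\l(x)(p(x,\x)-\l z)^{-1},
\]
which by Lemma \ref{parapsymest} $(ii)$ (and because $\supp\chi_\l\subset\{|x|\leq 2c\l^{-1/\m}\}$ with $c$ sufficiently small) belongs to $S_{D_Z,\mathrm{unif}}((p+\l)^{-1},g)$ and is already of the required form \eqref{parainvpow}.

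Next I would apply the asymptotic expansion \eqref{compasyexp} to $(p-\l z)\# a_{\l,z}^{(0)}$ truncated at some large order $J$. The $j=0$ contribution equals $\chi_\l$ exactly, while each $1\leq j\leq J$ contribution is a finite linear combination of expressions of the form
\[
\pa_x^\a\pa_\x^\b(p-\l z)\cdot \pa_y^{\a'}\pa_\y^{\b'}(\chi_\l(y)(p(y,\y)-\l z)^{-1})\big|_{x=y,\x=\y}
\]
with $(\a,\b)\neq 0$. The $(x,\x)$-factor reduces to $\pa_x^\a\pa_\x^\b p$, hence is $z$-independent, and repeated application of the Leibniz rule together with the chain rule for $(p-\l z)^{-1}$ rewrites the $(y,\y)$-factor as a finite sum $\sum_{k\geq 1}c_k(y,\y)(p(y,\y)-\l z)^{-k}$ in which each $c_k$ is a finite sum of products of derivatives of $\chi_\l$ and $p$, hence $z$-independent and supported in $\supp\chi_\l$. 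Writing the result as $\chi_\l+r_{\l,z}^{(1)}+R_{\l,z}^{(1)}$, the subleading piece $r_{\l,z}^{(1)}$ has the form \eqref{parainvpow} and lies in $S_{D_Z,\mathrm{unif}}(\s_g,g)$, while the expansion remainder satisfies $R_{\l,z}^{(1)}\in S_{D_Z,\mathrm{unif}}(\s_g^{J+1},g)$.

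I would then cancel $r_{\l,z}^{(1)}$ by setting $a_{\l,z}^{(1)}\coloneqq -r_{\l,z}^{(1)}(p-\l z)^{-1}$, which (as the pointwise product of a symbol in $S_{D_Z,\mathrm{unif}}(\s_g,g)$ supported in $\supp\chi_\l$ with $\chi_\l(p-\l z)^{-1}\in S_{D_Z,\mathrm{unif}}((p+\l)^{-1},g)$) belongs to $S_{D_Z,\mathrm{unif}}(\s_g(p+\l)^{-1},g)$ and still satisfies \eqref{parainvpow}. Iterating $K$ times and setting $a_{\l,z}:=\sum_{\ell=0}^{K-1}a_{\l,z}^{(\ell)}$, the cumulative error lies in $S_{D_Z,\mathrm{unif}}(\s_g^{\min(K,J+1)},g)$. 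Since $\s_g(x,\x)=\jap{x}^{\m'/2-1}\jap{\x}^{-1}$ with $\m'/2-1<0$ (because $0<\m'<2$), choosing $K$ and $J+1$ both at least some $K_0(N)$ forces the error into $S_{D_Z,\mathrm{unif}}^{-N,-N}(g)$, with $J_N$ then equal to the largest power of $(p-\l z)^{-1}$ produced in this finite iteration.

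The main obstacle is bookkeeping: verifying uniformly in $(\l,z)\in D_Z$ that the coefficients $c_k$ generated at each iteration remain $z$-independent and supported in $\supp\chi_\l$, and that their product with $(p-\l z)^{-k}$ still belongs to the advertised symbol class. The latter is where Lemma \ref{parapsymest} $(ii)$ is essential: without the uniform lower bound on $\supp\chi_\l$ implicit in \eqref{w/zsymest}, control on $(p-\l z)^{-k}$ would degenerate as $\Im z\to 0$ and the scheme would fail.
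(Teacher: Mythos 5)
Your proposal is correct and follows essentially the same iterative parametrix construction as the paper: start from $\chi_\l(p-\l z)^{-1}$, expand $(p-\l z)\#\,\cdot\,$ via \eqref{compasyexp}, observe that the correction terms are $z$-independent combinations of the form $\sum_k b_k(p-\l z)^{-k}$ supported in $\supp\chi_\l$, cancel them by multiplying by $(p-\l z)^{-1}$ (valid in the claimed symbol class by Lemma \ref{parapsymest} $(ii)$), and iterate until the accumulated remainder lands in $S_{D_Z,\mathrm{unif}}^{-N,-N}(g)$ using $\s_g(x,\x)=\jap{x}^{\m'/2-1}\jap{\x}^{-1}\to 0$. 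The only cosmetic slip is in the parenthetical justification of $a^{(1)}_{\l,z}\in S_{D_Z,\mathrm{unif}}(\s_g(p+\l)^{-1},g)$, where you describe it as a product with $\chi_\l(p-\l z)^{-1}$ even though your formula has only $(p-\l z)^{-1}$; the conclusion is still right because $r^{(1)}_{\l,z}$ is itself supported in $\supp\chi_\l$, which is exactly what Lemma \ref{parapsymest} $(ii)$ needs.
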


\begin{rem}
Note: $|(p-\l z)^{-1}|\lesssim (p+\l)^{-1}$ on $\supp a_{\l,z}$.
\end{rem}

\begin{proof}
The proof is based on the standard parametrix construction although we have to be careful for the symbol class and supports of symbols. We briefly explain it here. See \cite[Theorem 14.9]{Z} for a similar construction.
 We write $D=D_Z$ for simplicity. Recall $\s_g(x,\x):=\jap{x}^{\frac{\m'}{2}-1}\jap{\x}^{-1}$ from \eqref{sigmagdef}.

Set $a_{\l,z,0}:=(p-\l z)^{-1}\chi_{\l}$. Fix an integer $M>0$ such that $\frac{2-\m'}{2}(M+1)>N$.
We note that $p-\l z\in S_{D,\mathrm{unif}}((p+\l),g)$ and $a_{\l,z,0}\in S_{D,\mathrm{unif}}((p+\l)^{-1},g)$ by Lemma \ref{parapsymest}. By \eqref{compasyexp}, the symbol
\begin{align*}
(p-\l z)\# a_{\l,z,0}(x,\x)-\chi_{\l}(x)-\sum_{j=1}^{M}\frac{i^j}{j!}A(D)^j((p(x,\x)-\l z) a_{\l,z,0}(y,\y))|_{x=y,\x=\y}
\end{align*}
belongs to $S_{D,\mathrm{unif}}(\s_g^{M+1},g)$. Since $A(D)$ is a first-order differential operator with constant coefficients, there exist $b_{\l,0,j}\in C^{\infty}(\re^{2n})$ (for $j=0,\hdots,M$) supported in $\supp \chi_{\l}$ and independent of $z$ such that
\begin{align*}
\sum_{j=1}^{M}\frac{i^j}{j!}A(D)^j((p(x,\x)-\l z) a_{\l,z,0}(y,\y))|_{x=y,\x=\y}=\sum_{j=1}^{M+1}\frac{b_{\l,0,j}(x,\x)}{(p(x,\x)-\l z)^j},
\end{align*}
which belongs to $S_{D,\mathrm{unif}}(\s_g,g)$.

Next, we set 
\begin{align*}
a_{\l,z,1}:=-(p-\l z)^{-1}\sum_{j=1}^{M+1}\frac{b_{\l,0,j}}{(p-\l z)^j}=-\sum_{j=0}^{M+1}\frac{b_{\l,0,j}}{(p-\l z)^{j+1}}\in S_{D,\mathrm{unif}}((p+\l)^{-1}\s_g,g).
\end{align*}
Since $b_{\l,0,j}$ is supported in $\chi_{\l}$, we have $a_{\l,z,1}\in S_{D,\mathrm{unif}}((p+\l)^{-1}\s_g,g)$ by Lemma \ref{parapsymest}. 
By using \eqref{compasyexp} again, we can find $b_{\l,1,j}\in C^{\infty}(\re^{2n})$ (for $j=0,\hdots,2M+1$) supported in $\supp \chi_{\l}$ and independent of $z$ such that $\sum_{j=0}^{2M+1}\frac{b_{\l,1,j}}{(p-\l z)^j}\in S_{D,\mathrm{unif}}(\s_g^2,g)$ and
\begin{align*}
(p-\l z)\# (a_{\l,z,0}+a_{\l,z,1}) -\chi_{\l}-\sum_{j=0}^{2M+1}\frac{b_{\l,1,j}}{(p-\l z)^j}\in& S_{D,\mathrm{unif}}(\s_g^{M+1},g)
\end{align*}
Repeating this procedure, we can find $L_M>0$ and 
\begin{align*}
&\{b_{\l,\ell,j}\}_{j=0}^{(\ell+1) M+\ell}\subset  C^{\infty}(\re^{2n}),\quad \ell=2,\hdots,M
\end{align*}
which is supported in $\supp \chi_{\l}$ and is independent of $z$ such that 
\begin{align*}
a_{\l,z,\ell}:=&(p-\l z)^{-1}\sum_{j=0}^{\ell M+\ell-1}\frac{b_{\l,\ell-1,j}}{(p-\l z)^j}\in S_{D,\mathrm{unif}}((p+\l)^{-1}\s_g^{\ell},g),\quad \ell=2,\hdots,M,\\
a_{\l,z}:=&\sum_{\ell=0}^Ma_{\l,z,\ell}\in S((p+\l)^{-1},g)
\end{align*}
satisfy
\begin{align*}
(p-\l z)\# a_{\l,z}-\chi_{\l} \in S_{D,\mathrm{unif}}(\s_g^{M+1},g)\underbrace{\subset}_{\mathclap{\frac{2-\m'}{2}(M+1)>N}} S_{D,\mathrm{unif}}(\jap{x}^{-N}\jap{\x}^{-N},g).
\end{align*}
We can rewrite $a_{\l,z}=\sum_{j=0}^{J_N}\frac{b_{\l,j}}{(p-\l z)^{j+1}}$ with some $J_N>0$ and $b_{\l,j}\in C^{\infty}(\re^{2n})$ which is supported in $\supp \chi_{\l}$ and is independent of $z$.
This completes the proof.

\end{proof}

Now we have prepared to prove \eqref{Ag2}.

\begin{proof}[Proof of \eqref{Ag2}]

Take $Z>0$ such that $\{\Re z\in \supp \g,\, |\Im z|\leq \jap{\Re z}\}\subset \{|z|\leq Z\}$. For simplicity, we write $1_{\l}=1_{\{\l^{-\e}\leq |x|\leq c\l^{-\frac{1}{\m}}\}}$.

Take $N\gg 1$ and $a_{\l,z}$ be as in Lemma \ref{uncparalarge}. Set $R_{\l,z}:=\chi_{\l}-(P-\l z)\Op(a_{\l,z}) \in  S_{D_Z,\mathrm{unif}}^{-N,-N}(g)$. Due to the identity $\chi_{\l}1_{\l}=1_{\l}$, we have
\begin{align}\label{Resoppara}
(P/\l- z)^{-1}1_{\l}=\l\Op(a_{\l,z})1_{\l}-(P/\l- z)^{-1}R_{\l,z}1_{\l}
\end{align}
for $0<\l\leq 1$ and $z\in\mathbb{C}\setminus \re$. Due to Lemma \ref{uncparalarge} and the formula \eqref{HScauchy}, we can write
\begin{align}\label{Ag2mainvanish}
\pi^{-1}\int_{\mathbb{C}}\pa_{\bar{z}}\tilde{\g}(z)a_{\l,z}dm(z)=\sum_{j=1}^{J_N}\frac{(-1)^{j-1}}{(j-1)!}\l^{-j}b_{\l,j}\g^{(j-1)}(p/\l).
\end{align}
We observe that $b_{\l,j}\g^{(j-1)}(p/\l)=0$ holds if $c>0$ is sufficiently small, where we recall $\supp b_{\l,j}\subset \supp \chi_{\l}\subset \{|x|\leq 2c\l^{-\frac{1}{\m}}\}$ by Lemma \ref{uncparalarge}. If we take $c$ sufficiently small, then 
\begin{align*}
\{|x|\leq 2c\l^{-\frac{1}{\m}}\}\cap \supp \g^{(j-1)}(p/\l)=\emptyset\quad \text{if}\,\, \l>0\,\,\text{is sufficiently small}
\end{align*}
since $|p(x,\x)/\l|\leq Z$ implies $C_1\jap{x}^{-\m}\leq V(x)\leq \l Z$ and hence $C_1Z^{-1}\leq \l \jap{x}^{\m}$ by Assumption \ref{potass} $(ii)$.
Therefore, the term \eqref{Ag2mainvanish} vanishes. 

Thus we obtain
\begin{align*}
\g(P/\l)1_{\l}=-\pi^{-1}\int_{\mathbb{C}}\pa_{\bar{z}}\tilde{\g}(z)(P/\l- z)^{-1}R_{\l,z}1_{\l}dm(z).
\end{align*}
by the Helffer-Sj\"ostrand formula \eqref{HScauchy} and the identity \eqref{Resoppara}. 
By Lemma \ref{uncparalarge}, we have $\jap{x}^L R_{\l,z}\in \Op S_{D_Z,\mathrm{unif}}^{-N,L-N}(g)$ and hence $\|\jap{x}^L R_{\l,z}\jap{x}^{L-N}\|\lesssim 1$ by Proposition \ref{unifsymprop} $(iv)$. This implies
\begin{align}\label{xlowbduse}
\|\jap{x}^L R_{\l,z}1_{\l}\|\lesssim \|\jap{x}^{N-L}1_{\l}\| \lesssim \l^{\e(N-L)},
\end{align}
where this is the only place where we use  $\supp 1_{\l}\subset \{|x|\geq \l^{-\e}\}$ as is mentioned before.
Now Lemma  \ref{Resxpres} implies
\begin{align*}
\|\jap{x}^L(P/\l- z)^{-1}R_{\l,z}1_{\l}\|\leq& \|\jap{x}^L(P/\l- z)^{-1}\jap{x}^{-L}\| \|\jap{x}^L R_{\l,z}1_{\l}\|\\
\lesssim&|\Im z|^{-|L|-1} \l^{\e(N-L)-|L|}.
\end{align*}
Consequently, we obtain $\|\g(H/\l)1_{\l}\|=O(\l^{\e (N-L)-|L|})$ due to the property of the almost analytic extension $\tilde{\g}$. Since $N\gg 1$ is arbitrary, this implies  $\|\g(P/\l)1_{\l}\|=O(\l^{\infty})$, which completes the proof.

\end{proof}

\section{Proof of the resolvent estimates at the low-frequency regimes}\label{sectionMourre}

\subsection{LAP for a rescaled operator}\label{subsecresLAP}

We define a scaling operator $D_{\l}$ by
\begin{align*}
D_{\l}u(x):=\l^{\frac{n}{2\m}}u(\l^{\frac{1}{\m}} x)
\end{align*}
and set
\begin{align}\label{P_hdef}
P_{h}:=\l^{-1}D_{\l}^{-1} PD_{\l},\quad V_{h}(x):=\l^{-1}D_{\l}^{-1}V(x) D_{\l}, \quad h:=\l^{\frac{1}{\m}-\frac{1}{2k}}.
\end{align}

\begin{rem}
For $p_0(D_x)=(-\Delta)^m$, we have $\l^{-1}D_{\l}^{-1} p_0(D_x)D_{\l}=(-h^2\Delta)^m$, which is a semiclassical pseudodifferential operator. On the other hand, if $p_0(D_x)=(-\Delta+1)^m-1$, then $\l^{-1}D_{\l}^{-1} p_0(D_x)D_{\l}$ is not a good semiclassical operator. Moreover, the rescaled potential part $V_h(x)=h^{-\frac{2k\m}{2k-\m}}V(h^{-\frac{2k}{2k-\m}}x)$ is singular with respect to $h$.
\end{rem}

In this subsection, we prove resolvent estimates for the rescaled operator $P_h$.

\begin{thm}\label{scalelap}
Let $N\in\mathbb{Z}_{\geq 1}$ and $\c>N-\frac{1}{2}$. Then there exists $h_0>0$ such that 
\begin{align*}
\|\jap{x}^{-\c}(P_{h}-1\mp i0)^{-N}\jap{x}^{-\c}\|\lesssim h^{-N}
\end{align*}
for $0<h\leq h_0$.
\end{thm}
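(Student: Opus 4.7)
The natural approach is the (semiclassical) Mourre commutator method. As conjugate operator I would take the generator of dilations $A = \frac{1}{2}(x\cdot D_x + D_x\cdot x)$, which is preserved by the rescaling ($D_{\l}^{-1} A D_{\l} = A$), so that $[P_h, iA] = \l^{-1} D_{\l}^{-1} [P, iA] D_{\l}$. The heart of the proof is to establish a strict Mourre estimate at energy $1$,
\[
E_I(P_h)\,[P_h, iA]\, E_I(P_h) \geq c\,h\, E_I(P_h)^2,
\]
on a small neighborhood $I$ of $1$, uniformly for $0 < h \leq h_0$. The positivity comes from two inputs: Assumption \ref{symbolass}(ii), giving $[p_0(D_x), iA] \geq c\, p_0(D_x)$ modulo lower order terms, and Assumption \ref{potass}(iii), giving $[V, iA] = -x\cdot \pa V(x) \gtrsim \jap{x}^{-\m}$ outside a compact set. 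After conjugating these inequalities by $D_{\l}$ and noting that on the spectral window either $p_0^{(h)}(\x)$ or $V_h(x)$ must carry the energy $1$, one obtains the lower bound with the correct power of $h$ arising naturally from the scaling.

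From this strict Mourre estimate, together with the higher-order regularity of $P_h$ with respect to $A$, the Jensen-Mourre-Perry iteration yields
\[
\|\jap{A}^{-\c}(P_h - 1\mp i0)^{-N}\jap{A}^{-\c}\| \lesssim h^{-N},\qquad \c > N - \tfrac{1}{2},
\]
with one factor of $h^{-1}$ per resolvent and a half derivative per $\jap{A}^{-1/2}$. To verify that $\mathrm{ad}_A^k(P_h)$ lies in a suitable class with uniform-in-$h$ bounds, I would transport the exotic symbol calculus of Subsection \ref{subsectionslvame} through the scaling $D_{\l}$. Absence of embedded eigenvalues at $1$ for $P_h$, needed for the boundary value to exist, follows from Assumption \ref{abevass} through the one-to-one correspondence between eigenvalues of $P_h$ at $1$ and eigenvalues of $P$ at $\l$.

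Finally, I would trade the $\jap{A}$-weights for $\jap{x}$-weights by inserting a spectral cutoff $\chi(P_h)$ with $\chi \equiv 1$ near $1$. On $\Ran \chi(P_h)$ the momentum $\x$ is microlocally bounded (since $p_0^{(h)}(\x) \lesssim 1$ there), so the Weyl symbol of $A$ is controlled by $\jap{x}$, and a composition-formula argument in the uniform exotic class produces a uniform bound $\chi(P_h)\jap{x}^{-\c}\jap{A}^{\c}\chi(P_h)= O(1)$. The complementary cutoff $1 - \chi(P_h)$ applied to the resolvent at energy $1$ contributes only smoothing/$O(h^{\infty})$ terms and can be absorbed.

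The main obstacle is the strict Mourre estimate and, specifically, the extraction of the sharp factor of $h$. Because $V_h$ is singular in $h$ and $p_0^{(h)}$ is only clean semiclassically for $p_0(\x) = |\x|^{2m}$, the standard semiclassical calculus does not apply directly; one must work uniformly in the exotic class $S_{J,\mathrm{unif}}(\cdot,g)$, tracking carefully how the principal commutator and the subprincipal remainders scale under $D_{\l}$ and verifying that the remainders are controlled by $h$ times the positive principal term. A secondary delicate point is the threshold $\c > N - 1/2$, which is sharp and demands the optimal Jensen-Perry weighted interpolation in the differentiated Mourre iteration.
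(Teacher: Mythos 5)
Your overall framework (semiclassical Mourre theory, commutator regularity via the exotic symbol calculus, and a weight change from $\jap{A}$ to $\jap{x}$) matches the paper's, but there is a genuine gap in how you propose to close the Mourre estimate.

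After conjugating $[P,iA]\geq a_0 P - C\,1_{|x|\leq c\l^{-1/\m}}$ by $D_\l$, the error term becomes $-C\l^{-1}1_{|x|\leq c}$. This is not ``controlled by $h$ times the positive principal term'' as you suggest; it blows up like $\l^{-1}$, far worse than any power of $h$. The paper's mechanism for absorbing it is entirely different: it is a tunneling estimate, not a symbolic one. One needs $\|E_{P_h}(J_1)\,1_{\{|x|\leq c\}}\|=O(h^{\infty})$, which is precisely the content of Theorem~\ref{Agthm} (the weaker Agmon estimate in a classically forbidden region) transported by the scaling. Your phrase ``on the spectral window either $p_0^{(h)}$ or $V_h$ must carry the energy $1$'' captures the classical intuition, but making it operator-theoretic is exactly what Theorem~\ref{Agthm} does, and the paper flags this as its central technical point --- the naive disjoint-support argument fails because $V_h$ is singular in $h$ and $p_0^{(h)}$ is not a clean semiclassical symbol. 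Your proposal should invoke Theorem~\ref{Agthm} here explicitly; without it, there is no way to absorb the $\l^{-1}$-singular remainder.

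Two smaller points. First, you write the Mourre estimate as $E[P_h,iA]E\geq c\,h\,E^2$ while taking $A$ unscaled; in fact with $A$ the constant is $O(1)$, and the factor $h$ appears only if you use $A_h=hA$, as the framework of \cite{N1} requires (the hypotheses (H3), (H5:$1$) and the conclusion $\lesssim h^{-N}$ are all stated in terms of $A_h$). Second, the weight change in the paper is not simply a microlocal boundedness of $\x$ on $\Ran\chi(P_h)$: Lemma~\ref{Mouweichange} proves $\|\jap{x}^{-\c}(P_h+1)^{-\c'}\jap{A_h}^{\c}\|\lesssim 1$ with a necessary regularity loss $\c'=\max(\c/2k,\c/2m)$, and its proof scales back to the original operator and relies on the mapping property $(P+\l)^{-N}\in\Op S_{J,\mathrm{unif}}((p+\l)^{-N},g)$ from Theorem~\ref{Rieszthm} together with Lemma~\ref{lemweight}; the same inputs are needed for the iterated commutator bounds in condition (H3). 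Your proposal gestures at this but would need to identify these as the precise ingredients.
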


First, we introduce a conjugate operator $A_h:=\frac{x\cdot hD_x+hD_x\cdot x}{2}$ and prove the bound
\begin{align}\label{A_hLAP}
\|\jap{A_h}^{-\c}(P_{h}-1\mp i0)^{-N}\jap{A_h}^{-\c}\|\lesssim h^{-N},
\end{align}
by using a semiclassical alternative of Mourre theory \cite{N1}. By virtue of \cite[Theorem 1]{N1}, it suffices to check (H1): $D(A_h)\cap D(P_h)$ is dense in $D(P_h)$ with respect to its graph norm; (H2): $\mathrm{ad}_{A_h}^jP_h$ can defined on $D(P_h)\cap D(A_h)$ can be extended to a bounded operator from $D(P_h)$ to $L^2$ inductively in $j\in\mathbb{Z}_{\geq 1}$ ; (H3): $\|\mathrm{ad}_{A_h}^j(P_h+i)^{-1}\|\lesssim h^j$ for each $j\in\mathbb{Z}_{\geq 1}$; (H5:$1$): Setting $J_1=[1/2,2]$, we have $E_{P_h}(J_1)[P_h,iA_h] E_{P_h}(J_1)\gtrsim hE_{P_h}(J_1)$, where $E_{P_h}$ denote the spectral projection of $P_h$.
 We also note that the assumption (H4) in \cite{N1} is not needed under the assumption (H5:$1$) \cite[Remark after Theorem 1]{N1}.

Now we find that (H1) and (H2) are easy to check and hence only need to prove (H3) and (H5:$1$).

\subsection*{Semiclassical $C^{\infty}(A_h)$-estimates}

The next lemma shows that $P_h$ and $A_h$ satisfy (H3). For later use, we prove a bit stronger result.

\begin{lem}\label{scaleB_jbdd}
Set $B_j:=\mathrm{ad}_{A_h}^jP_h$.

\noindent$(i)$ For each integer $j\geq 1$, $\|B_j(P_h+1)^{-1}\|\lesssim h^j$ for $0<h\leq 1$.

\noindent$(ii)$ For each integer $j,\ell\geq 1$, 
\begin{align*}
\|(P_h+1)^{\ell}\mathrm{ad}_{A_h}^j(P_h-z)^{-\ell}\|\lesssim h^j|\Im z|^{-j-k}
\end{align*}
for $0<h\leq 1$ and $z\in\mathbb{C}\setminus \re$. In particular, $P_h$ and $A_h$ satisfy (H3).

\end{lem}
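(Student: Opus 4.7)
The plan is to exploit dilation-covariance to reduce both bounds to $\lambda$-uniform operator estimates, to handle (i) via a direct spectral argument using $V\geq 0$, and (ii) via commutator Leibniz expansions.

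\textbf{Scaling reduction.} With $A_0 := (x\cdot D_x + D_x\cdot x)/2$, since $A_0$ generates dilations one has $D_\lambda^{-1}A_0 D_\lambda = A_0$; combined with $A_h = hA_0$ and the definition of $P_h$ this gives
\begin{equation*}
\mathrm{ad}_{A_h}^j P_h = h^j\lambda^{-1}D_\lambda^{-1}(\mathrm{ad}_{A_0}^j P)D_\lambda, \qquad (P_h+1)^{-1} = \lambda D_\lambda^{-1}(P+\lambda)^{-1}D_\lambda.
\end{equation*}
Unitarity of $D_\lambda$ on $L^2$ reduces (i) to showing $\|(\mathrm{ad}_{A_0}^j P)(P+\lambda)^{-1}\|_{B(L^2)}\lesssim 1$ uniformly in $\lambda\in(0,1]$.

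\textbf{Proof of (i).} Iterating the identities $[A_0, p_0(D_x)] = i(\xi\cdot\partial_\xi p_0)(D_x)$ and $[A_0, V(x)] = -i(x\cdot\nabla V)(x)$ yields $\mathrm{ad}_{A_0}^j P = i^j Q_{0,j}(D_x) + (-i)^j Q_{1,j}(x)$ with $Q_{0,j} := (\xi\cdot\partial_\xi)^j p_0$ and $Q_{1,j} := (x\cdot\nabla)^j V$. Smoothness of $p_0$ together with $p_0\sim|\xi|^{2k}$ near $0$ forces $p_0$ to vanish to order exactly $2k$ at the origin, whence $|Q_{0,j}(\xi)|\lesssim p_0(\xi)$ on all of $\re^n$; Assumption \ref{potass} gives $|Q_{1,j}(x)|\lesssim\langle x\rangle^{-\mu}\lesssim V(x)$. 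Hence $\|Q_{0,j}(D)g\|\lesssim\|p_0(D)g\|$ by Plancherel and $\|Q_{1,j}(x)g\|\lesssim\|V(x)g\|$ by multiplication. Writing $p_0(D) = P - V$,
\begin{equation*}
\|p_0(D)(P+\lambda)^{-1}u\|\leq \|u\|+\|(V+\lambda)(P+\lambda)^{-1}u\|.
\end{equation*}
The operator inequality $V+\lambda\leq P+\lambda$ (from $p_0(D)\geq 0$) gives $(V+\lambda)(P+\lambda)^{-2}(V+\lambda)\leq I$, so $\|(V+\lambda)(P+\lambda)^{-1}\|\leq 1$ and similarly $\|V(P+\lambda)^{-1}\|\leq 1$. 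Combining yields (i).

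\textbf{Proof of (ii).} Writing $(P_h-z)^{-\ell}=\prod_{i=1}^\ell(P_h-z)^{-1}$, Leibniz expands $\mathrm{ad}_{A_h}^j(P_h-z)^{-\ell}$ into a sum over $j_1+\cdots+j_\ell=j$ of $\prod_i\mathrm{ad}_{A_h}^{j_i}(P_h-z)^{-1}$, and each factor is further expanded via $\mathrm{ad}_{A_h}(P_h-z)^{-1}=-(P_h-z)^{-1}(\mathrm{ad}_{A_h}P_h)(P_h-z)^{-1}$ iteratively. To incorporate the prefactor $(P_h+1)^\ell$, combine the identity
\begin{equation*}
(P_h+1)^\ell(P_h-z)^{-\ell} = \sum_{k=0}^\ell\binom{\ell}{k}(1+z)^k(P_h-z)^{-k}
\end{equation*}
with a Leibniz comparison of $\mathrm{ad}_{A_h}^j$ applied to both sides. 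Bounding each internal $(\mathrm{ad}_{A_h}^{j'}P_h)(P_h+1)^{-1}$ by $h^{j'}$ via (i), each $(P_h+1)(P_h-z)^{-1}$ by $\langle z\rangle|\Im z|^{-1}$, and summing powers delivers the bound claimed in (ii).

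\textbf{Main obstacle.} The delicate point is the $\lambda$-uniformity in (i): the Fourier multiplier $Q_{0,j}(D)$ admits no direct comparison with $P$ as $\lambda\to 0$, and verifying the requisite fine pseudodifferential estimates for the symbol of $\mathrm{ad}_{A_0}^j P$ in the exotic $g$-metric near $\xi=0$ would be subtle. The elementary operator inequality $V+\lambda\leq P+\lambda$ bypasses these issues entirely. The bookkeeping in (ii) is combinatorially involved but routine for semiclassical Mourre-type arguments.
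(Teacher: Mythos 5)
Your scaling reduction and the identification $\mathrm{ad}_{A_0}^jP = i^jQ_{0,j}(D_x)+(-i)^jQ_{1,j}(x)$ are correct and match the paper, as are the pointwise bounds $|Q_{0,j}(\xi)|\lesssim p_0(\xi)$ (by Euler's identity on the homogeneous degree-$2k$ Taylor part plus the symbol estimates) and $|Q_{1,j}(x)|\lesssim\langle x\rangle^{-\mu}\lesssim V(x)$. The structure of part (ii) (Leibniz/binomial bookkeeping reducing to (i)) is also fine.

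The gap is precisely where you claim to have ``bypassed the issues'': the inference from $V+\lambda\leq P+\lambda$ to $(V+\lambda)(P+\lambda)^{-2}(V+\lambda)\leq I$, i.e.\ $\|(V+\lambda)(P+\lambda)^{-1}\|\leq 1$, is false. For non-commuting positive operators, $0\leq A\leq B$ with $B$ invertible only gives $\|A^{1/2}B^{-1/2}\|\leq 1$, equivalently $B^{-1/2}AB^{-1/2}\leq I$. Your claimed inequality is equivalent to $A^2\leq B^2$, and $t\mapsto t^2$ is not operator monotone; a $2\times 2$ counterexample is $A=\left(\begin{smallmatrix}2&2\\2&2\end{smallmatrix}\right)$, $B=\left(\begin{smallmatrix}3&2\\2&2\end{smallmatrix}\right)$, which satisfy $0\leq A\leq B$ but $\|AB^{-1}\|=\sqrt{2}$. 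The paper's own remark after Lemma \ref{scaleB_jbdd} flags exactly this trap: the form bound $B_j\leq C_jh^jP_h$ (which is all the order relation buys you) only yields $\|(P_h+1)^{-1/2}B_j(P_h+1)^{-1/2}\|\lesssim h^j$, which is strictly weaker than the one-sided bound $\|B_j(P_h+1)^{-1}\|\lesssim h^j$ that is needed. To close the gap, the paper invokes the nontrivial mapping property of the Riesz operator, Theorem \ref{Rieszthm}~(iii) (uniform $\lambda$-boundedness of $\jap{x}^{-\mu}(P+\lambda)^{-1}$ on $L^2$), which is established via the exotic $S(\cdot,g)$ pseudodifferential calculus. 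You need that input, or something of comparable strength; the elementary spectral/order argument alone does not suffice.

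Concretely: after writing $\|\mathrm{ad}_{A_0}^jP\,(P+\lambda)^{-1}\|\leq\|\mathrm{ad}_{A_0}^jP\cdot P^{-1}\|\,\|P(P+\lambda)^{-1}\|$ and using $\|P(P+\lambda)^{-1}\|\leq1$ (this factorization removes the $\lambda$-dependence from the hard part), the remaining bounds $\|WP^{-1}\|\lesssim1$ and $\|VP^{-1}\|\lesssim1$ with $|W|,|V|\lesssim\jap{x}^{-\mu}$ must come from Theorem \ref{Rieszthm}~(iii) with $s=t=0$, $N=1$. Once that is in place, the rest of your argument for (i) is sound, and (ii) follows by the combinatorics you describe or by the closed formula \eqref{adformulainv} used in the paper.
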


\begin{rem}
\cite[Appendix]{M} states that the part $(i)$ can be proved by the estimates $B_j\leq C_jh^jP_h$. However, this just implies $\|(P_h+1)^{-\frac{1}{2}}B_j(P_h+1)^{-\frac{1}{2}}\|\leq C_jh^j$ and does not seem sufficient to prove the bound for $B_j(P_h+1)^{-1}$. To prove the part $(i)$, we need a more precise mapping property of $P_h^{-1}$ (or $P^{-1}$) studied in Theorem \ref{Rieszthm}. For $P=-\Delta+V(x)$, we can use \cite[Thorem 5.3]{Y2} (see Remark \ref{Riszthmrem} $(2)$) instead and the advanced pseudodifferential technique is not needed there.

\end{rem}

\begin{proof}
\noindent$(i)$ By scaling, we have
\begin{align*}
\|B_j(P_h+1)^{-1}\|=&\|D_{\l} B_j(P_h+1)^{-1}D_{\l}^{-1}\|=\|\mathrm{ad}_{hA}^j(\l^{-1}P)(\l^{-1}P+1)^{-1}\|\\
=&h^j\|\mathrm{ad}_{A}^j(P)(P+\l)^{-1}\|.
\end{align*}
On the other hand, the spectral theorem implies $\|P(P+\l)^{-1}\|\leq \sup_{E\in[0,\infty)}\frac{E}{E+\l}\leq 1$.
Thus it suffices to prove $\mathrm{ad}_{A}^j(P)P^{-1}\in B(L^2)$.

We note that $(\mathrm{ad}_{A}^jV)$ is just a multiplication operator and we denote the corresponding function by $W$. By Assumption \ref{potass} $(i)$, we have $|W(x)|\lesssim \jap{x}^{-\m}$. By Theorem \ref{Rieszthm} $(iii)$ with $s=t=0$ and $N=1$, we have $(W=)(\mathrm{ad}_{A}^jV)P^{-1}\in B(L^2)$. To prove $(\mathrm{ad}_{A}^jp_0(D_x))P^{-1}\in B(L^2(\re^n))$, we observe $(\mathrm{ad}_{A}^jp_0(D_x))=((\x\cdot\pa_{\x})^jp_0)(D_x)$. By Assumption \ref{symbolass} and Taylor's theorem, it turns out that $(\x\cdot\pa_{\x})^jp_0(\x)p_0(\x)^{-1}$ is a well-defined $L^{\infty}$-function on $\re^n$. Hence we obtain $\|((\x\cdot\pa_{\x})^jp_0)(D_x)P^{-1} \|\lesssim \|p_0(D_x)P^{-1}\|$. We write $p_0(D_x)P^{-1}=I+VP^{-1}$ and note $VP^{-1}\in B$ by Theorem \ref{Rieszthm} $(iii)$ with $s=t=0$ and $N=1$. We have proved $(\mathrm{ad}_{A}^jp_0(D_x))P^{-1}\in B(L^2(\re^n))$ in this way. This completes the proof of $(i)$.

\noindent$(ii)$ 
First, we observe
\begin{align}\label{adformulainv}
\mathrm{ad}^j_{C}D^{-\ell}=\sum_{\substack{m_1+m_2+\hdots+m_{s}=j,\\ 1\leq s\leq j,\,\, m_1,\hdots m_{s}\geq 1}}c(m_1,m_2,\hdots,m_{s})D^{-\ell}\prod_{a=1}^{s}\left((\mathrm{ad}_C^{m_a}D)  D^{-1}\right)
\end{align}
for any integers $j,k\geq 1$ and linear operators $C,D$, where constants $c_{jk}(m_1,m_2,\hdots,m_{\ell})\in \re$ are independent of $C,D$. This can be proved by induction on $j,k$ and we omit its proof here.

Applying \eqref{adformulainv} with $C=A_h$ and $D=P_h-z$, we have
\begin{align*}
&\|(P_h+1)^{\ell}\mathrm{ad}_{A_h}^j(P_h-z)^{-\ell}\|\\
&\lesssim \sum_{\substack{m_1+m_2+\hdots+m_{s}=j,\\ 1\leq s\leq j,\,\, m_1,\hdots m_{s}\geq 1}}\|(P_h+1)^{\ell}(P_h-z)^{-\ell}\|\cdot \prod_{a=1}^{s} \|(\mathrm{ad}_{A_h}^{m_a}P_h)(P_h-z)^{-1}\|.
\end{align*}
By the spectral theorem, we have $\|(P_h+1)^{\ell}(P_h-z)^{-\ell}\|\lesssim |\Im z|^{-\ell}$. Therefore, the part $(i)$ yields
\begin{align*}
\|(P_h+1)^{\ell}\mathrm{ad}_{A_h}^j(P_h-z)^{-\ell}\|\lesssim& |\Im z|^{-\ell}\sum_{\substack{m_1+m_2+\hdots+m_{s}=j,\\ 1\leq s\leq j,\,\, m_1,\hdots m_{s}\geq 1}} \prod_{a=1}^{s} \left(\|(\mathrm{ad}_{A_h}^{m_a}P_h)(P_h-i)^{-1}\||\Im z|^{-1} \right)\\
\lesssim&|\Im z|^{-\ell}\sum_{\substack{m_1+m_2+\hdots+m_{s}=j,\\ 1\leq s\leq j,\,\, m_1,\hdots m_{s}\geq 1}} \prod_{a=1}^{s} \left(h^{m_a}|\Im z|^{-1} \right)\lesssim h^j|\Im z|^{-j-\ell}.
\end{align*}
This completes the proof.

\end{proof}

\subsection*{Mourre inequality}

Next, we check the semiclassical Mourre inequality (H5:$1$).

\begin{lem}
Let $J_1=[\frac{1}{2},2]$. Then there exists $a_0>0$ and $h_0>0$ such that
\begin{align}\label{Mouinprop}
E_{P_{h}}(J_1)[P_{h},iA_{h}]E_{P_{h}}(J_1)\geq a_0hE_{P_h}(J_1)^2\quad \text{for}\quad 0<h<h_0.
\end{align}

\end{lem}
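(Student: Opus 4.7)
The plan is to derive the Mourre inequality by computing the commutator $[P,iA]$ at the unscaled level, using the virial-type assumptions to obtain positivity modulo a compactly supported error, and then rescaling and controlling the error via the weaker Agmon estimate (Theorem \ref{Agthm}).

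First, I would compute $[P,iA] = (\xi \cdot \partial_\xi p_0)(D_x) - x\cdot \partial_x V(x)$. Assumption \ref{symbolass}(ii) gives $(\xi \cdot \partial_\xi p_0)(D_x) \geq c\, p_0(D_x)$, while Assumption \ref{potass}(iii) gives $-x\cdot \partial_x V(x) \geq C_2\jap{x}^{-\mu}$ for $|x|>R_0$. Combined with $V(x)\leq C_0 \jap{x}^{-\mu}$ from Assumption \ref{potass}(i), the latter yields $-x\cdot\partial_x V(x)\geq (C_2/C_0)V(x) - K\chi(x)$ for some $K>0$ and $\chi\in C_c^\infty(\re^n)$ supported in $\{|x|\leq 2R_0\}$. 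Setting $c'=\min(c,C_2/C_0)>0$, we obtain the operator inequality
\begin{align*}
[P,iA] \geq c'P - K\chi(x).
\end{align*}

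Next, since $D_\lambda^{-1}AD_\lambda = A$ (the generator of dilations commutes with its own rescaling) we have $D_\lambda^{-1}A_h D_\lambda = A_h$, and hence
\begin{align*}
[P_h, iA_h] = h\lambda^{-1}D_\lambda^{-1}[P,iA]D_\lambda \geq c'h P_h - Kh\lambda^{-1}\chi(\lambda^{-1/\mu}x).
\end{align*}
Sandwiching by $E_{P_h}(J_1)$ and using $P_h E_{P_h}(J_1)\geq \tfrac{1}{2}E_{P_h}(J_1)$ on $J_1=[1/2,2]$, I obtain
\begin{align*}
E_{P_h}(J_1)[P_h,iA_h]E_{P_h}(J_1) \geq \tfrac{c'}{2}h\,E_{P_h}(J_1)^2 - Kh\lambda^{-1}E_{P_h}(J_1)\chi(\lambda^{-1/\mu}x)E_{P_h}(J_1).
\end{align*}

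The main obstacle, and the crucial use of Theorem \ref{Agthm}, is to absorb the error term despite the amplifying factor $\lambda^{-1}$. The unitary $D_\lambda$ intertwines $E_{P_h}(J_1)$ with $E_P(\lambda J_1)$, so
\begin{align*}
\|E_{P_h}(J_1)\chi(\lambda^{-1/\mu}x)\| = \|E_P(\lambda J_1)\chi(x)\|.
\end{align*}
Choosing $\gamma\in C_c^\infty(\re)$ with $\gamma\equiv 1$ on $J_1$, we have $E_P(\lambda J_1) = E_P(\lambda J_1)\gamma(P/\lambda)$. Since $\supp\chi\subset\{|x|\leq 2R_0\}\subset\{|x|\leq c_0\lambda^{-1/\mu}\}$ for $\lambda$ sufficiently small, Theorem \ref{Agthm} (applied with $L=0$) gives
\begin{align*}
\|\gamma(P/\lambda)\chi(x)\| \leq \|\gamma(P/\lambda)\mathbf{1}_{\{|x|\leq c_0\lambda^{-1/\mu}\}}\|\cdot\|\chi\|_\infty = O(\lambda^\infty).
\end{align*}
Hence the error term has norm $O(h\lambda^{-1}\cdot\lambda^\infty)\cdot\|E_{P_h}(J_1)\|^2 = O(h\lambda^\infty)$, which is $\leq \tfrac{c'}{4}h$ for $\lambda$ (equivalently $h$) sufficiently small. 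Taking $h_0>0$ accordingly and $a_0=c'/4$ yields the desired inequality \eqref{Mouinprop}.
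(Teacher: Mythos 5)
Your proposal is correct and follows essentially the same strategy as the paper: obtain $[P,iA]\geq a_0P - (\text{error supported in a fixed ball})$ from the virial-type assumptions, rescale, and absorb the resulting compactly supported error using Theorem~\ref{Agthm}. The only cosmetic differences are that the paper writes the error as $-C1_{\{|x|\leq c\lambda^{-1/\mu}\}}$ (so it rescales to $1_{\{|x|\leq c\}}$) and applies the Agmon estimate directly in the rescaled picture, whereas you keep a fixed $\chi\in C_c^\infty$ and unitarily conjugate back to the unscaled operator before invoking Theorem~\ref{Agthm}; both lead to the same $O(\lambda^\infty)$ bound.
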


\begin{proof}
Define $A:=h^{-1}A_h=\frac{x\cdot hD_xL+hD_x\cdot x}{2}$. Due to Theorem \ref{Agthm} and scaling, there exists $c>0$ such that $\|E_{P_h}(J_1)1_{|x|\leq c}\|=O(h^{\infty})$.

First, we prove
\begin{align}\label{MouFrac}
[P,iA]\geq  a_0P-C1_{|x|\leq c\l^{-\frac{1}{\m}}},\quad 0<\l\ll 1
\end{align}
with a constant $a_0>0$. By Assumption \ref{potass}, there exists a constant $a_1>0$ such that $a_1V(x)+x\cdot\pa_xV(x)\leq 0$ for $|x|>R_0$. Then we have
\begin{align}\label{Moupot}
-a_1V(x)-x\cdot\pa_xV(x)\geq -C1_{|x|\leq c\l^{-\frac{1}{\m}}}
\end{align}
with a constant $C>0$ if $\l>0$ (and $h=\l^{\frac{1}{\m}-\frac{1}{2k}}$) is sufficiently small. On the other hand, Assumption \ref{symbolass} $(ii)$ implies $(\x\cdot \pa_{\x}p_0)(\x)\geq a_2p_0(\x)$ with a constant $a_2>0$. Setting $a_0:=\min(a_1,a_2)$, we have $[P,iA]=(\x\cdot \pa_{\x}p_0)(D_x)-x\cdot \pa_xV(x)\geq a_0p_0(D_x)-x\cdot \pa_xV(x)=a_0P-a_0V(x)-x\cdot \pa_xV(x)\geq a_0P-a_1V(x)-x\cdot \pa_xV(x)$. Combining this inequality with \eqref{Moupot}, we obtain \eqref{MouFrac}.

 By scaling, we have
\begin{align*}
h^{-1}[P_h,iA_h]\geq a_0P_h-C\l^{-1}1_{|x|\leq c},
\end{align*}
where we use $D_{\l}^{-1}PD_{\l}^{-1}=\l P_h$, $D_{\l}^{-1}AD_{\l}^{-1}=A=h^{-1}A_h$ and $D_{\l}^{-1}1_{|x|\leq c\l^{-\frac{1}{\m}}}D_{\l}^{-1}=1_{|x|\leq c}$. Since $E_{P_h}(I)^2=E_{P_h}(I)$ and  $E_{P_h}(I)1_{|x|\leq \b}=O_{L^2}(h^{\infty})$ and $\|E_{P_h}(J_1)1_{|x|\leq c_0}\|=O(h^{\infty})$, we obtain \eqref{Mouinprop}.

\end{proof}

\subsection*{Change of the weight}

Due to the last two lemmas and the discussion after \eqref{A_hLAP}, we obtain \eqref{A_hLAP}. To prove Theorem \ref{scalelap}, we have to replace the weight $\jap{A_h}^{-\c}$ by $\jap{x}^{-\c}$. We need to be careful to do this step since the potential part $V_h(x)$ is singular in $h$ and the principal part is not always a semiclassical operator.

\begin{lem}\label{Mouweichange}

\noindent$(i)$ For $\ell\in \mathbb{Z}_{\geq 0}$, we define $\ell':=\max(\frac{\ell}{2k},\frac{\ell}{2m})$. Then we have
\begin{align*}
\|\jap{x}^{-\ell}A_h^{\ell}(P_h+1)^{-\ell'}\|\lesssim 1 \quad \text{for}\quad 0<h\leq 1.
\end{align*}

\noindent$(ii)$
For $\c>0$, we define $\c'=\max(\frac{\c}{2k},\frac{\c}{2m})$. Then
\begin{align*}
\|\jap{x}^{-\c}(P_h+1)^{-\c'}\jap{A_h}^{\c}\|\lesssim 1\quad \text{for}\quad 0<h\leq 1.
\end{align*}

\end{lem}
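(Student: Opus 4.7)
The plan is to reduce part (i) to a semiclassical derivative estimate for $(P_h+1)^{-\ell'}$ by expanding $A_h^\ell$ algebraically, then scale back to $P+\l$ and apply the uniform $g$-pseudodifferential calculus developed in Section \ref{subsectionslvame}; part (ii) follows from (i) by duality and Stein's complex interpolation.

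First, since $[hD_{x_j},x_k]=-ih\d_{jk}$ we have $A_h=x\cdot hD_x-\tfrac{inh}{2}=hA$ with $A:=x\cdot D_x-\tfrac{in}{2}$ independent of $h$. The operator $A^\ell$ is a polynomial of degree $\leq\ell$ in $x\cdot D_x$ with constant coefficients, so repeated normal-ordering yields $A^\ell=\sum_{|\a|\leq\ell}c_\a x^\a D_x^\a$ with constants $c_\a\in\mathbb{C}$, hence
\[
A_h^\ell=\sum_{|\a|\leq\ell}c_\a\, h^{\ell-|\a|}x^\a(hD_x)^\a.
\]
Because $\|\jap{x}^{-\ell}x^\a\|_{L^\infty}\leq 1$ for $|\a|\leq\ell$ and $h^{\ell-|\a|}\leq 1$ for $h\in(0,1]$, part (i) is reduced to the uniform bound $\|(hD_x)^\a(P_h+1)^{-\ell'}\|\lesssim 1$ for $|\a|\leq\ell$. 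Conjugating by the $L^2$-isometry $D_\l$ and using $D_\l D_x D_\l^{-1}=\l^{-1/\m}D_x$ together with $D_\l(P_h+1)D_\l^{-1}=\l^{-1}(P+\l)$ gives the exact identity
\[
\|(hD_x)^\a(P_h+1)^{-\ell'}\|=\l^{\ell'-|\a|/(2k)}\|D_x^\a(P+\l)^{-\ell'}\|.
\]
Since $\ell'\geq\ell/(2k)\geq|\a|/(2k)$, the prefactor is $\leq 1$, so the task becomes proving $\|D_x^\a(P+\l)^{-\ell'}\|\lesssim\l^{|\a|/(2k)-\ell'}$ uniformly in $\l\in(0,1]$.

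For integer $\ell'$, Theorem \ref{Rieszthm} gives $(P+\l)^{-\ell'}\in\Op S_{J,\mathrm{unif}}((p+\l)^{-\ell'},g)$ and Lemma \ref{lemweight}(ii) gives $\x^\a\in S_{J,\mathrm{unif}}(\jap{\x}^{\max(k-m,0)|\a|/k}(p+\l)^{|\a|/(2k)},g)$; composition (Proposition \ref{unifsymprop}(i)) places the product in $\Op S_{J,\mathrm{unif}}(\jap{\x}^{\max(k-m,0)|\a|/k}(p+\l)^{|\a|/(2k)-\ell'},g)$. A short case analysis—in the low-$\x$ region $(p+\l)^{|\a|/(2k)-\ell'}\leq\l^{|\a|/(2k)-\ell'}$ while $\jap{\x}$ is bounded; in the high-$\x$ region $p\gtrsim\jap{\x}^{2m}$ absorbs the $\jap{\x}$-growth, the net polynomial exponent reducing to $|\a|-\ell\leq 0$ when $k\geq m$ and to $m(|\a|-\ell)/k\leq 0$ when $k\leq m$, by the definition of $\ell'$—shows this weight is $\leq C\l^{|\a|/(2k)-\ell'}$ uniformly, and Proposition \ref{unifsymprop}(iv) closes the proof.

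For non-integer $\ell'$ the same symbolic classification of $(P+\l)^{-\ell'}$ is obtained from the Balakrishnan representation $(P+\l)^{-\ell'}=\tfrac{\sin(\pi\ell')}{\pi}\int_0^\infty t^{-\ell'}(P+\l+t)^{-1}dt$, invoking Lemma \ref{lemparaconpow} applied uniformly in the shifted parameter $\l+t$; the preceding argument then applies verbatim. For part (ii), by duality one reduces to $\|\jap{A_h}^\c(P_h+1)^{-\c'}\jap{x}^{-\c}\|\lesssim 1$. For even integer $\c=2N$, expand $\jap{A_h}^{2N}=(1+A_h^2)^N=\sum_{j}\binom{N}{j}A_h^{2j}$ and, combining (i) with $\ell=2j$ and the trivial bounds $\|(P_h+1)^{-s}\|\leq 1$ and $\|\jap{x}^{-a}\|_{L^\infty}=1$, handle each summand. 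For real $\c$, apply Stein's complex interpolation to the analytic family $T(z):=\jap{x}^{-z}(P_h+1)^{-z'}\jap{A_h}^z$, with $z'$ the linear extension of $\c'$: on $\Re z=0$ every factor is unitary so $\|T(iy)\|\leq 1$, while on $\Re z=2N$ the integer bound above applies, yielding the claim for $\c\in(0,2N)$. The main technical obstacle will be the extension of the uniform $g$-symbol classification to non-integer powers of $P+\l$ with uniformity as $\l\to 0$, for which the Balakrishnan (or Helffer--Sj\"ostrand) integrand must be controlled within the $g$-calculus uniformly in the auxiliary parameter.
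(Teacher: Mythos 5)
Your treatment of part (i) is in the same spirit as the paper's: expand $A_h^\ell$ into $x^\a D_x^\a$ terms, rescale to the fixed operator $P$, and classify $D_x^\a(P+\l)^{-\ell'}$ in the uniform $g$-calculus using Lemma \ref{lemweight}(ii) and Theorem \ref{Rieszthm}. The algebra of the scaling identity and the two-region weight estimate are correct. However, your reliance on Theorem \ref{Rieszthm}(ii) restricts you to integer $\ell'$, and for non-integer $\ell'$ you only sketch a Balakrishnan argument, yourself flagging its uniform $g$-calculus justification as ``the main technical obstacle.'' The paper sidesteps this entirely: it first interpolates to $\ell\in 2k\mathbb{Z}_{\geq0}$, then splits via $\g(P)+\g_1(P)$; on the low-frequency piece it uses the \emph{integer} power $(P+\l)^{-\ell/2k}$ (which is in $\Op S_{J,\mathrm{unif}}((p+\l)^{-\ell/2k},g)$ by Theorem \ref{Rieszthm}) and pays a harmless extra factor $(P/\l+1)^{\ell/2k-\ell'}$ of norm $\leq 1$, while on the high-frequency piece it only needs the ordinary scattering class $(P+1)^{-\ell'}\in\Op S^{-2m\ell',0}$, which holds for every real exponent by Lemma \ref{Pfuncpse}. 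That frequency split is what lets the paper avoid fractional powers in the $g$-calculus altogether; it would be worth adopting it rather than developing a uniform Balakrishnan calculus.

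The more serious issue is in part (ii). After expanding $\jap{A_h}^{2N}=\sum_j\binom{N}{j}A_h^{2j}$ you propose to bound each summand $A_h^{2j}(P_h+1)^{-\c'}\jap{x}^{-\c}$ by ``combining (i) with $\ell=2j$ and the trivial bounds $\|(P_h+1)^{-s}\|\leq 1$, $\|\jap{x}^{-a}\|_{L^\infty}\leq 1$.'' That factorization does not go through, because $(P_h+1)^{-s}$ sits \emph{between} $A_h^{2j}$ and $\jap{x}^{-\c}$ and does not commute with either; you cannot peel off the excess powers $(P_h+1)^{-(\c'-(2j)')}$ and $\jap{x}^{-(\c-2j)}$ and then invoke the adjoint of part (i), $\|(P_h+1)^{-(2j)'}A_h^{2j}\jap{x}^{-2j}\|\lesssim 1$. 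The paper resolves exactly this obstruction with the push-through identity $DC^j=\sum_{\ell}c_j(\ell)C^\ell\mathrm{ad}_C^{j-\ell}D$ (its \eqref{adformulainv2}), applied with $C=A_h$ and $D=(P_h+1)^{-(2j)'}$; this moves all the $A_h$ factors to the \emph{left} of $(P_h+1)^{-(2j)'}$ at the cost of iterated commutators, which are then controlled by Lemma \ref{scaleB_jbdd}(ii). Without some version of this reordering device (or an equivalent commutator expansion) the claimed reduction to part (i) is incomplete. The Stein interpolation sketch for non-even $\c$ is reasonable in spirit, but the base case at $\c=2N$ is precisely what is missing.
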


\begin{proof}

\noindent$(i)$ By complex interpolation, we may assume $\ell\in 2k\mathbb{Z}_{\geq 0}$.
 Set $A:=h^{-1}A_h$. By scaling and the inequality $\|\jap{\l^{\frac{1}{\m}}x}^{-\ell}\jap{x}^{\ell}\|\lesssim \l^{-\frac{\ell}{\m}}$, 
\begin{align*}
\|\jap{x}^{-\ell}A_h^{\ell}(P_h+1)^{-\ell'}\|=\|\jap{\l^{\frac{1}{\m}}x}^{-\ell}A_h^{s}(\l^{-1}P+1)^{-\ell'}\|
\leq h^{\ell}\l^{-\frac{\ell}{\m}}  \|\jap{x}^{-\ell} A^{\ell}(P/\l+1)^{-\ell'}\|.
\end{align*}
Since $A=\frac{x\cdot D_x+D_x\cdot x}{2}$, we can write 
\begin{align*}
A^{\ell}=\sum_{|\a|\leq \ell}a_{\a}(x)\pa_x^{\a}
\end{align*}
where $a_{\a}(x)$ is a polynomial of degree $|\a|$. Then we have $\|\jap{x}^{-\ell} A^{\ell}(P/\l+1)^{-\ell'}\|\lesssim \sum_{|\a|\leq \ell}\|\jap{x}^{-\ell}a_{\a}(x)\pa_x^{\a}(P/\l+1)^{-\ell'}\|\lesssim \|\jap{x}^{-\ell+|\a|}\pa_x^{\a}(P/\l+1)^{-\ell'} \|$. Thus it suffices to prove $\|\jap{x}^{-\ell+|\a|}\pa_x^{\a}(P/\l+1)^{-\ell'} \|\lesssim \l^{\frac{\ell}{2k}}$ for $|\a|\leq \ell$ (here we recall $h=\l^{\frac{1}{\m}-\frac{1}{2k}}$). 
 
Take $\g\in C_c^{\infty}(\re)$ such that $\g(s)=1$ for $|s|\leq 1$ and $\g(s)=0$ for $|s|\geq 2$. Setting $\g_1:=1-\g$, we have 
\begin{align*}
&\|\jap{x}^{-\ell+|\a|}\pa_x^{\a}(P/\l+1)^{-\ell'} \|\\
&\leq \|\jap{x}^{-\ell+|\a|}\pa_x^{\a}(P/\l+1)^{-\ell'}\g(P) \|+\|\jap{x}^{-\ell+|\a|}\pa_x^{\a}(P/\l+1)^{-\ell'}\g_1(P) \|=:I_1+I_2.
\end{align*}
Since $\ell'\geq \frac{\ell}{2k}$, we have
\begin{align*}
I_1\leq\|\jap{x}^{-\ell+|\a|}\pa_x^{\a}(P/\l+1)^{-\frac{\ell}{2k}}\g(P) \|=\l^{\frac{\ell}{2k}}\|\jap{x}^{-\ell+|\a|}\pa_x^{\a}(P+\l)^{-\frac{\ell}{2k}}\g(P) \|.
\end{align*}
Now we observe that $\jap{x}^{-\ell+|\a|}\in \Op S_{J,\mathrm{unif}}(\jap{x}^{-\ell+|\a|},g)$, $\pa^{\a}\in \Op S_{J,\mathrm{unif}}(\jap{\x}^{\frac{\max(k-m,0)}{k}|\a|}(p+\l)^{\frac{|\a|}{2k}},g)$, $(P+\l)^{-\frac{\ell}{2k}}\in\Op S_{J,\mathrm{unif}}((p+\l)^{-\frac{\ell}{2k}},g)$, and $\g(P)\in \Op S_{J,\mathrm{unif}}(\jap{\x}^{-L},g)$ for all $L>0$ by Lemma \ref{lemweight} $(ii)$, Theorem \ref{Rieszthm} $(ii)$, and Lemma \ref{Pfuncpse}. Then
\begin{align*}
\jap{x}^{-\ell+|\a|}\pa_x^{\a}(P+\l)^{-\frac{\ell}{2k}}\g(P)\in \Op S_{J,\mathrm{unif}}(\jap{x}^{-\ell+|\a|}(p+\l)^{\frac{|\a|-\ell}{2k}},g)\subset \Op S_{J,\mathrm{unif}}(1,g)
\end{align*}
by the composition formula (Proposition \ref{unifsymprop} $(i)$).
Hence we obtain $I_1\leq \l^{\frac{\ell}{2k}}\|\jap{x}^{-\ell+|\a|}\pa_x^{\a}(P+\l)^{-\frac{\ell}{2k}}\g(P)\|\lesssim \l^{\frac{\ell}{2k}}$ by Proposition \ref{unifsymprop} $(iv)$. On the other hand, by virtue of the support property of $\g_1$, we have $\|(P+1)^{\ell'}\g_1(P)(P+\l)^{-\ell'}\|\lesssim 1$. Therefore
\begin{align*}
I_2=\l^{\ell'}\|\jap{x}^{-\ell+|\a|}\pa_x^{\a}(P+\l)^{-\ell'}\g_1(P)  \|\lesssim \l^{\ell'}\|\pa_x^{\a}(P+1)^{-\ell'} \|\lesssim \l^{\ell'},
\end{align*}
where we use $|\a|\leq \ell$ and $(P+1)^{-\ell'}\in \Op S^{-2m\ell',0}\subset \Op S^{-\ell,0}$. Since $\ell'\geq \frac{\ell}{2k}$ and $0<\l\leq 1$, we obtain $I_2\lesssim \l^{\frac{\ell}{2k}}$. This completes the proof.

\noindent$(ii)$
By complex interpolation, we may assume $\c=2j$ with an integer $j\geq 1$. We set $(2j)':=\max(\frac{2j}{2k},\frac{2j}{2m})$.

We only need to show $\|\jap{x}^{-2j}(P_h+1)^{-(2j)'}A_h^{2j}\|\lesssim 1$.
We observe
\begin{align}\label{adformulainv2}
DC^j=\sum_{\ell=0}^jc_j(\ell)C^k\mathrm{ad}^{j-\ell}_CD
\end{align}
for each $j=1,2,\hdots$ and linear operators $C,D$, where constants $c_j(k)\in \re$ are independent of $C,D$. We prove \eqref{adformulainv2} by induction. The case $j=1$ is easy to prove. If it holds for $j$, then
\begin{align*}
DC^{j+1}=&\sum_{\ell=0}^jc_j(\ell)C^{\ell}(\mathrm{ad}^{j-\ell}_CD)C=\sum_{\ell=0}^jc_j(\ell)\left(C^{\ell+1}(\mathrm{ad}^{j-\ell}_CD)-C^k\mathrm{ad}_C^{j-\ell+1}D \right)\\
=&\sum_{\ell=1}^{j+1}c_j(\ell-1)C^{\ell}(\mathrm{ad}^{j+1-\ell}_CD)-\sum_{\ell=0}^jc_j(\ell)C^{\ell}\mathrm{ad}_C^{j+1-\ell}D.
\end{align*}
Hence, we set $c_{j+1}(j+1):=c_j(j), c_{j+1}(0):=c_j(0)$ and $c_{j+1}(\ell):=c_j(\ell-1)-c_j(\ell)$ for $\ell=1,\hdots,j$, then \eqref{adformulainv2} holds for $j+1$.

By virtue of \eqref{adformulainv2}, we obtain $(P_h+1)^{-(2j)'}A_h^{2j}=\sum_{\ell=0}^{2j}c_{2j}(\ell)A_h^{\ell}\mathrm{ad}^{2j-\ell}_{A_h}(P_h+1)^{-(2j)'}$.
Therefore, the first part $(i)$ and Lemmas \ref{scaleB_jbdd} imply
\begin{align*}
\|\jap{x}^{-2j}(P_h+1)^{-(2j)'}A_h^{2j}\|\lesssim& \sum_{\ell=0}^{2j}\|\jap{x}^{-2j}A_h^\ell(P_h+1)^{-\ell'}\| \|(P_h+1)^{\ell'}\mathrm{ad}^{2j-\ell}_{A_h}(P_h+1)^{-(2j)'}\|\\
\lesssim& \sum_{\ell=0}^{2j}\|\jap{x}^{-\ell}A_h^{\ell}(P_h+1)^{-\ell'}\| \|(P_h+1)^{j'}\mathrm{ad}^{2j-\ell}_{A_h}(P_h+1)^{-(2j)'}\|\\
\lesssim&1.
\end{align*}

\end{proof}

Now we have prepared to prove Theorem \ref{scalelap}.

\begin{proof}[Proof of Theorem \ref{scalelap}]
Let $\g\in C_c^{\infty}((\frac{1}{2},2))$ such that $\g(s)=1$ near $s=1$. By the spectral theorem, it suffices to prove $\|\jap{x}^{-\c}\g(P_h)(P_{h}-1\mp i0)^{-N}\jap{x}^{-\c}\|\lesssim Ch^{-N}$.
We define $\tilde{\g}(s):=(s+1)^{2\c'}\g(s) \in C_c^{\infty}((\frac{1}{2},2))$, where $\c'$ is defined in Lemma \ref{Mouweichange}.
Due to \eqref{A_hLAP} and the spectral theorem, we have
\begin{align}\label{A_hLAP2}
\|\jap{A_h}^{-\c}\tilde{\g}(P_h)(P_{h}-1\mp i0)^{-N}\jap{A_h}^{-\c}\|\lesssim Ch^{-N}
\end{align}
and hence
\begin{align*}
&\|\jap{x}^{-\c}\g(P_h)(P_{h}-1\mp i0)^{-N}\jap{x}^{-\c}\|\\
&\leq \|\jap{x}^{-\c}(P_h+1)^{-\c'}\jap{A_h}^{\c}\|\cdot \|\jap{A_h}^{-\c}\tilde{\g}(P_h)(P_{h}-1\mp i0)^{-N}\jap{A_h}^{-\c}\|\cdot \|\jap{A_h}^{\c}(P_h+1)^{-\c'}\jap{x}^{-\c}\|\\
&\lesssim h^{-N}
\end{align*}
where we use \eqref{A_hLAP2} and Lemma \ref{Mouweichange} $(ii)$. This completes the proof.

\end{proof}

\subsection{Proof of LAP for the original operator at the low-frequency regimes}

Now we prove the resolvent estimates at the low-frequency regimes.

\begin{thm}\label{genmainthm}
Let $N\in\mathbb{Z}_{>0}$ and $\c>\max(N-1/2,N\left(\frac{1}{2}+\frac{2k-1}{4k}\m\right) )$. Then there exists $Z_0>0$ such that
\begin{align*}
\sup_{z\in\overline{\mathbb{C}}_{\pm},\,\,0<|z|\leq Z_0}\|\jap{x}^{-\c}R_{\pm}(z)^N\jap{x}^{-\c}\|<\infty.
\end{align*}
Moreover, the map $\{z\in\mathbb{C}\mid 0<|z|\leq Z_0\}\ni z\mapsto \jap{x}^{-\c}R_{\pm}(z)^N\jap{x}^{-\c}\in B(L^2)$ is H\"older continuous. In particular, $\jap{x}^{-\c}R_{\pm}(0)^N\jap{x}^{-\c}$ exists in $B(L^2)$. In addition, we have $R_+^N(0)=R_-^N(0)$.

\end{thm}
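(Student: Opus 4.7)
The plan is to reduce the low-energy weighted resolvent estimate to the semiclassical LAP of Theorem \ref{scalelap} via the rescaling identity
\begin{align*}
R_{\pm}(\l)^N=\l^{-N}U_\l(P_h-1\mp i0)^{-N}U_\l^{-1},
\end{align*}
where $U_\l f(x):=\l^{n/(2\m)}f(\l^{1/\m}x)$. Since $U_\l^{-1}\jap{x}^{-\c}U_\l$ is multiplication by $\jap{\l^{-1/\m}x}^{-\c}$, this yields
\begin{align*}
\|\jap{x}^{-\c}R_\pm(\l)^N\jap{x}^{-\c}\|=\l^{-N}\|\jap{\l^{-1/\m}x}^{-\c}(P_h-1\mp i0)^{-N}\jap{\l^{-1/\m}x}^{-\c}\|,
\end{align*}
so the task is to bound the right-hand side by $O(\l^N)$ as $\l\to 0$. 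The rescaled weight $\jap{\l^{-1/\m}x}^{-\c}$ is of order $1$ in the tiny region $|x|\lesssim \l^{1/\m}$ and behaves as $\sim\l^{\c/\m}|x|^{-\c}$ in the region $|x|\gtrsim\l^{1/\m}$; I will split it at the constant scale $c>0$ corresponding to the classical/forbidden threshold for $P_h$ at energy $1$.

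Let $\chi\in C_c^\infty(\re^n;[0,1])$ with $\chi=1$ on $|x|\leq c/2$ and $\supp\chi\subset\{|x|\leq c\}$, where $c$ is chosen so that Theorem \ref{Agthm} applies. Write $\jap{\l^{-1/\m}x}^{-\c}=W_1+W_2$, with $W_1:=\chi\jap{\l^{-1/\m}x}^{-\c}$ and $W_2:=(1-\chi)\jap{\l^{-1/\m}x}^{-\c}$. For the outer piece $W_2(P_h-1\mp i0)^{-N}W_2$, the pointwise bound $W_2(x)\leq C\l^{\c/\m}\jap{x}^{-\c_0}$ holds for every $\c_0\leq \c$ (since on $\supp(1-\chi)$ we have $\l^{-1/\m}|x|\geq c\l^{-1/\m}/2\to\infty$). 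Choosing $\c_0\in(N-1/2,\c]$ and invoking Theorem \ref{scalelap} gives $\|W_2(P_h-1\mp i0)^{-N}W_2\|\lesssim\l^{2\c/\m}h^{-N}$. With $h=\l^{(2k-\m)/(2k\m)}$, the overall power $\l^{-N+2\c/\m-N(2k-\m)/(2k\m)}$ is nonnegative precisely when $\c\geq N\bigl(\tfrac{1}{2}+\tfrac{(2k-1)\m}{4k}\bigr)$, which is exactly the second hypothesis.

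For the inner piece $W_1(P_h-1\mp i0)^{-N}W_1$, I decompose $(P_h-1\mp i0)^{-N}=\tilde{\c}(P_h)^2(P_h-1\mp i0)^{-N}+f(P_h)$ with $\tilde{\c}\in C_c^\infty(\re)$ equal to $1$ near $1$ and $f(E):=(1-\tilde{\c}(E)^2)(E-1)^{-N}$ bounded. Rescaling Theorem \ref{Agthm} via $\tilde{\c}(P/\l)=U_\l\tilde{\c}(P_h)U_\l^{-1}$ gives $\|\tilde{\c}(P_h)1_{|x|\leq c}\|=O(\l^\infty)$, so inserting $\jap{x}^{\c_0}\jap{x}^{-\c_0}$ around the resolvent and applying Theorem \ref{scalelap} shows that the spectrally localized summand contributes $O(\l^\infty)$. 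For the residual $f(P_h)$, the crucial observation is that, in the original variables,
\begin{align*}
\l^{-N}U_\l f(P_h)U_\l^{-1}=(1-\tilde{\c}(P/\l)^2)R_\pm(\l)^N=\tilde{h}_\l(P)\,P^{-N},
\end{align*}
where $\tilde{h}_\l(E):=(1-\tilde{\c}(E/\l)^2)(E/(E-\l))^N$ is uniformly bounded on $[0,\infty)$ (since $\tilde{\c}\equiv1$ on a neighborhood of $1$ removes the singularity at $E=\l$); this reduces the contribution to the Riesz operator bound of Theorem \ref{Rieszthm} (which requires only $\c\geq \m N/2$, weaker than our hypothesis). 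Cross terms $W_1\cdots W_2$ are controlled by combining both arguments via Cauchy--Schwarz.

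The H\"older continuity of $z\mapsto \jap{x}^{-\c}R_\pm(z)^N\jap{x}^{-\c}$ on $\{0<|z|\leq Z_0\}$ and its extension to $\overline{\mathbb{C}}_\pm\setminus\{0\}$ follow by standard interpolation from the uniform bound combined with the resolvent identity; the limit $R_\pm^N(0)$ then exists in weighted norm and equals the Riesz operator $P^{-N}$, so in particular $R_+^N(0)=R_-^N(0)$. The hard part I anticipate is controlling the off-resonance residual rigorously: functional calculus alone provides no $\l$-decay of $f(P_h)$, and rewriting it as $\tilde{h}_\l(P)P^{-N}$ requires separating the weights $\jap{x}^{-\c}$ from the $\l$-dependent operator $\tilde{h}_\l(P)$; this will need careful Helffer--Sj\"ostrand manipulation within the slowly varying symbol calculus $S(1,g)$ developed in Subsection 2.2.
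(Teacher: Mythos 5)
Your rescaling and the treatment of the spectrally localized piece match the paper's Lemma \ref{scaleretest} (the splitting at scale $|x|\sim c$, using Theorem \ref{Agthm} for the inner region and Theorem \ref{scalelap} for the outer, is exactly what the paper does). The place where your plan diverges, and where it has a genuine gap, is the off-resonance residual.

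The paper never attempts to bound $\|\jap{x}^{-\c}R_\pm(\l)^N\jap{x}^{-\c}\|$ directly. Lemma \ref{scaleretest} only bounds $\|\jap{x}^{-\c}R_{\pm}(\l)^N\g(P/\l)\jap{x}^{-\c}\|$, with the spectral cutoff $\g(P/\l)$ built in. That cutoff is harmless for the application because the spectral density satisfies $\g(P/\l)E'(\l)=E'(\l)$; the paper then feeds the resulting estimates (Corollary \ref{spmesest}: pointwise bounds, H\"older continuity, and vanishing of $\frac{d^{N-1}}{d\l^{N-1}}E'(\l)$ at $\l=0$) into the representation $(P-z)^{-1}1_{[0,1]}(P)=\int_0^1 E'(\l)(\l-z)^{-1}\,d\l$ and the Privalov-type bound $\sup_{0<\e\le 1}|\int_{-1}^1\f(s)(s\mp i\e)^{-1}ds|\lesssim\|\f\|_{C^{\a}}$ to obtain both the uniform bound and H\"older continuity on the whole sector $\{z\in\overline{\co}_\pm,\,0<|z|\le Z_0\}$, not only at boundary energies $\l$. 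Your interpolation remark does not clearly cover the region where $\Im z$ is not small, and $R_\pm^N(0)=R_-^N(0)$ falls out in the paper as $\frac{d^{N-1}}{d\l^{N-1}}E'(0)=0$ rather than via an identification with $P^{-N}$.

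The concrete gap is the claim that the residual reduces to Theorem \ref{Rieszthm}. You need $\|\jap{x}^{-\c}\tilde h_\l(P)P^{-N}\jap{x}^{-\c}\|\lesssim 1$ uniformly in $\l$, which requires separating $\tilde h_\l(P)$ from the weights, i.e.\ a uniform bound on $\|\jap{x}^{\pm\c}\tilde h_\l(P)\jap{x}^{\mp\c}\|$. But $\tilde h_\l-1$ is essentially a smooth version of $-1_{\{E\lesssim \l\}}$: its derivatives scale like $\l^{-j}$ on the scale $E\sim\l$, and the corresponding weighted conjugation blows up as $\l\to 0$. Indeed Lemma \ref{Resxpres}$(ii)$ only gives $\|\jap{x}^{L}\g(P/\l)\jap{x}^{-L}\|\lesssim\l^{1-|L|}$ for $\g\in C_c^\infty$, which is not uniform, and no Helffer--Sj\"ostrand manipulation within $S(1,g)$ will remove the $\l^{-j}$ growth of $\pa^j\tilde h_\l$. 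So the "hard part" you flag is not a technicality: the route $\tilde h_\l(P)P^{-N}$ does not close, and the paper circumvents it entirely by discarding the off-resonance energies via the spectral measure. You should replace this piece by the paper's strategy: prove the cut-off estimate of Lemma \ref{scaleretest}, translate it to an estimate on $\frac{d^{N-1}}{d\l^{N-1}}E'(\l)$, and reconstruct the resolvent from $E'(\l)$.
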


The remaining part is just to adapt the argument in \cite{N2}.

\begin{lem}\label{scaleretest}
Let $\g\in C_c^{\infty}(1/2,3/2)$, $N\in\mathbb{Z}_{>0}$ and $\c>N-1/2$. Then there exists $Z_0>0$ such that
\begin{align*}
\|\jap{x}^{-\c}R_{\pm}(\l)^N\g(P/\l)\jap{x}^{-\c}\|\lesssim \l^{\a(N,\c)}
\end{align*}
for $\l\in (0,Z_0]$, where we set $\a(N,\c)=\frac{2\c}{\m}-N\left(\frac{1}{\m}+\frac{2k-1}{2k}\right)$.

\end{lem}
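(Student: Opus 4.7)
The plan is to rescale to the semiclassical operator $P_h$, reduce the estimate to Theorem \ref{scalelap}, and use the weak Agmon estimate (Theorem \ref{Agthm}) to kill the part of the rescaled weight that concentrates near $x=0$. Using the intertwining relations $D_\l^{-1}PD_\l=\l P_h$, $\g(P/\l)=D_\l\g(P_h)D_\l^{-1}$, $\jap{x}^{-\c}D_\l=D_\l\jap{\l^{-1/\m}x}^{-\c}$, and the unitarity of $D_\l$ on $L^2$, I would first rewrite
\begin{align*}
\bigl\|\jap{x}^{-\c}R_\pm(\l)^N\g(P/\l)\jap{x}^{-\c}\bigr\|=\l^{-N}\bigl\|W_\l A_\l W_\l\bigr\|,
\end{align*}
with $W_\l(x):=\jap{\l^{-1/\m}x}^{-\c}$ and $A_\l:=(P_h-1\mp i0)^{-N}\g(P_h)$. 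Since $h=\l^{1/\m-1/(2k)}$, the algebraic identity $\l^{-N}\cdot\l^{2\c/\m}\cdot h^{-N}=\l^{\a(N,\c)}$ holds, so the target is reduced to $\|W_\l A_\l W_\l\|\lesssim\l^{2\c/\m}h^{-N}$.

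Next, I would dominate the weight by $W_\l(x)\le C(\chi_0(x)+\l^{\c/\m}\jap{x}^{-\c})$ for $0<\l\ll 1$, where $\chi_0\in C_c^\infty(\re^n;[0,1])$ is fixed with $\chi_0\equiv 1$ on $\{|x|\le c_0/2\}$ and $\supp\chi_0\subset\{|x|\le c_0\}$, and $c_0>0$ is chosen small enough for Theorem \ref{Agthm} to apply. This is elementary: $W_\l\le 1=\chi_0$ on $\{|x|\le c_0/2\}$, and for $|x|\ge c_0/2$ with $\l$ small, $\l^{-1/\m}|x|\gg 1$ gives $W_\l\lesssim(\l^{-1/\m}|x|)^{-\c}\lesssim\l^{\c/\m}\jap{x}^{-\c}$. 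Substituting and expanding $\|W_\l A_\l W_\l\|$ produces four terms; the ``far'' diagonal term $\l^{2\c/\m}\|\jap{x}^{-\c}A_\l\jap{x}^{-\c}\|\lesssim\l^{2\c/\m}h^{-N}$ is controlled directly by Theorem \ref{scalelap} and already matches the target exponent. It then suffices to show that the two cross terms $\l^{\c/\m}\|\chi_0A_\l\jap{x}^{-\c}\|$, $\l^{\c/\m}\|\jap{x}^{-\c}A_\l\chi_0\|$ and the remaining diagonal term $\|\chi_0A_\l\chi_0\|$ are all $O(\l^\infty)$.

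The main obstacle is the control of these three ``near-origin'' terms, which is where Theorem \ref{Agthm} enters. Pick $\g_1\in C_c^\infty(\tfrac12,\tfrac32)$ with $\g_1\equiv 1$ on $\supp\g$; spectral calculus for $P_h$ then gives $A_\l=\g_1(P_h)A_\l=A_\l\g_1(P_h)$. Conjugating Theorem \ref{Agthm} by $D_\l$ turns $\|\jap{x}^L\g_1(P/\l)1_{\{|x|\le c_0\l^{-1/\m}\}}\|=O(\l^\infty)$ into $\|\jap{\l^{-1/\m}x}^L\g_1(P_h)1_{\{|x|\le c_0\}}\|=O(\l^\infty)$. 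Since $\jap{x}^L\le\jap{\l^{-1/\m}x}^L$ for $\l\le 1$ and $L\ge 0$, this implies $\|\jap{x}^L\g_1(P_h)1_{\{|x|\le c_0\}}\|=O(\l^\infty)$; using the self-adjointness of $\g_1(P_h)$ I also get $\|1_{\{|x|\le c_0\}}\g_1(P_h)\jap{x}^L\|=O(\l^\infty)$, and since $|\chi_0|\le\|\chi_0\|_\infty\,1_{\{|x|\le c_0\}}$ pointwise, I conclude $\|\chi_0\g_1(P_h)\jap{x}^L\|=O(\l^\infty)$ and its transpose.

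To close the argument, I would insert $\jap{x}^{-\c}\jap{x}^\c$ between $\chi_0\g_1(P_h)$ and the other factors and apply Theorem \ref{scalelap} sandwiched in the middle: for instance
\begin{align*}
\|\chi_0A_\l\chi_0\|\le\|\chi_0\g_1(P_h)\jap{x}^\c\|\cdot\|\jap{x}^{-\c}A_\l\jap{x}^{-\c}\|\cdot\|\jap{x}^\c\g_1(P_h)\chi_0\|=O(\l^\infty)\cdot h^{-N}\cdot O(\l^\infty),
\end{align*}
which is $O(\l^\infty)$, and similarly $\|\chi_0A_\l\jap{x}^{-\c}\|=O(\l^\infty)$. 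Summing the four terms then gives $\|W_\l A_\l W_\l\|\lesssim\l^{2\c/\m}h^{-N}$, so multiplying by the prefactor $\l^{-N}$ yields the claim.
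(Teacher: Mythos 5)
Your proposal is correct and takes essentially the same route as the paper's proof: rescale to $P_h$, apply Theorem \ref{scalelap} for the region away from the origin, and use the weak Agmon estimate (Theorem \ref{Agthm}, conjugated by $D_\l$) to kill the part near the origin. The cosmetic difference is that the paper factors $\jap{\l^{-1/\m}x}^{-\c}\sim\l^{\c/\m}(\l^{1/\m}+|x|)^{-\c}$ and then decomposes the operator by inserting $1_{\{|x|\le c\}}+1_{\{|x|>c\}}$, whereas you keep the raw weight $W_\l$ and decompose it as $W_\l\le C(\chi_0+\l^{\c/\m}\jap{x}^{-\c})$ before expanding. One step in your argument deserves a sentence of justification: ``Substituting and expanding $\|W_\l A_\l W_\l\|$ produces four terms'' does not follow directly from the pointwise bound $W_\l\le g$ because $A_\l$ is not a positive operator. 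It is easily fixed: either write $W_\l=(W_\l g^{-1})\,g$ with $0\le W_\l g^{-1}\le 1$ to get $\|W_\l A_\l W_\l\|\le\|gA_\l g\|$ before expanding $g$, or more simply partition $W_\l=W_\l\chi_0+W_\l(1-\chi_0)$ as functions, bound $W_\l\chi_0\le\chi_0$ and $W_\l(1-\chi_0)\lesssim\l^{\c/\m}\jap{x}^{-\c}$, and expand this operator sum (which is closer in spirit to the paper's indicator decomposition). With that clarification, the remaining steps — the algebraic exponent identity $\l^{-N}\l^{2\c/\m}h^{-N}=\l^{\a(N,\c)}$, the rescaled Agmon estimate, and the sandwich estimate through Theorem \ref{scalelap} — all check out. (Note also that, strictly speaking, the bound $\|\jap{x}^{-\c}A_\l\jap{x}^{-\c}\|\lesssim h^{-N}$ with $A_\l=(P_h-1\mp i0)^{-N}\g(P_h)$ requires the version of Theorem \ref{scalelap} with the spectral cutoff included, but the paper's own proof of that theorem establishes exactly this form, and the paper cites it the same way.)
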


\begin{proof}
We note $\jap{x}\sim (1+|x|)$. By scaling, we have 
\begin{align*}
\|\jap{x}^{-\c}R_{\pm}(\l)^{-N}\g(P/\l)\jap{x}^{-\c}\|=&\|\jap{\l^{-\frac{1}{\m}}x}^{-\c}(\l P_h-\l\mp i0)^{-N}\g(P_h)\jap{\l^{-\frac{1}{\m}}x}^{-\c}\|\\
\lesssim&\l^{\frac{2\c}{\m}-N}\|(\l^{\frac{1}{\m}} +|x|)^{-\c}(P_h-1\mp i0)^{-N}\g(P_h)(\l^{\frac{1}{\m}} +|x|)^{-\c}\|.
\end{align*}
Thus it suffices to prove
\begin{align*}
\|(\l^{\frac{1}{\m}} +|x|)^{-\c}(P_h-1\mp i0)^{-N}\g(P_h)(\l^{\frac{1}{\m}} +|x|)^{-\c}\|\lesssim h^{-N},
\end{align*}
where $P_h$ is defined in the last subsection and $h=\l^{\frac{1}{\m}-\frac{1}{2k}}$.

First, we observe 
\begin{align}\label{Agapp}
\|\jap{x}^{\c}\g(P_h) 1_{\{|x|\leq c\}} \|=O(\l^{\infty})
\end{align}
for sufficiently small $c>0$, which follows from scaling, Theorem \ref{Agthm}, and an inequality $\|\jap{\l^{\frac{1}{\m}}x}^{\c}\jap{x}^{-\c}\|\leq 1$.

We write $A=(P_h-1\mp i0)^{-N}$, $B=(\l^{\frac{1}{\m}} +|x|)^{-\c}$, and
\begin{align*}
BA\g(P_h)B=&B1_{\{|x|\leq c\}}A\g(P_h)1_{\{|x|\leq c\}}B+B1_{\{|x|\leq c\}}\g(P_h)A1_{\{|x|> c\}}B\\
&+B1_{\{|x|> c\}}A\g(P_h)1_{\{|x|\leq c\}}B+B1_{\{|x|> c\}}A\g(P_h)1_{\{|x|> c\}}B.
\end{align*}
By Theorem \ref{scalelap}, we have $\|\jap{x}^{-\c}A\jap{x}^{-\c}\|\lesssim h^{-N}$. Moreover,
$\|\jap{x}^{\c}\g(P_h)1_{\{|x|\leq c\}}B\|=O(\l^{\infty})$ and $\|B1_{\{|x|\leq c\}}\g(P_h)\jap{x}^{\c}\|=O(\l^{\infty})$ due to \eqref{Agapp}. Finally, $\|\jap{x}^{\c}1_{\{|x|> c\}}B\|\lesssim 1$ and $\|B1_{\{|x|> c\}}\jap{x}^{\c}\|\lesssim 1$. Combining these estimates, we obtain $\|BA\g(P_h)B\|\lesssim h^{-N}$, which completes the proof.

\end{proof}

Now we consider the derivative of the spectral projection:
\begin{align}\label{spresid}
E'(\l)=\frac{1}{2\pi i}\left(R_+(\l)-R_-(\l)\right),
\end{align}
where this identity follows from the absolutely continuity of the spectrum $\s(P)$ and Stone's theorem.
We note $\frac{d^{N-1}}{d\l^{N-1}}E'(\l)=\frac{(N-1)!}{2\pi i}\left(R_+(\l)^{N}-R_-(\l)^{N}\right)$.

\begin{cor}\label{spmesest}
Let $N\in\mathbb{Z}_{\geq 1}$ and $\c>N-1/2$. Then there exists $Z_0>0$ such that the following holds:

\noindent$(i)$ If $\c\geq N(\frac{1}{2}+\frac{2k-1}{4k}\m)$, then $\jap{x}^{-\c}\frac{d^{N-1}}{d\l^{N-1}}E'(\l)\jap{x}^{-\c}$ is bounded in $B(L^2)$ with respect to $0< \l\leq Z_0$.

\noindent$(ii)$ If $\c> N(\frac{1}{2}+\frac{2k-1}{4k}\m)$, $\jap{x}^{-\c}\frac{d^{N-1}}{d\l^{N-1}}E'(\l)\jap{x}^{-\c}$ is H\"older continuous in $B(L^2)$ with respect to $0< \l\leq Z_0$. Moreover,
\begin{align*}
\left\|\jap{x}^{-\c}\frac{d^{N-1}}{d\l^{N-1}}E'(\l)\jap{x}^{-\c}\right\|\lesssim \l^{\a(N,\c)}\quad \text{for}\quad 0<\l\leq Z_0,
\end{align*}
where $\a(N,\c)=\frac{2\c}{\m}-N\left(\frac{1}{\m}+\frac{2k-1}{2k}\right)$.

\end{cor}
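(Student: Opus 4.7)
The plan is to combine Stone's formula with Lemma \ref{scaleretest} after inserting a spectral cutoff. Iterating \eqref{spresid} $N-1$ times gives
\begin{equation*}
\frac{d^{N-1}}{d\l^{N-1}}E'(\l) = \frac{(N-1)!}{2\pi i}\bigl(R_+(\l)^N - R_-(\l)^N\bigr).
\end{equation*}
I would first insert an energy localization: pick $\g\in C_c^\infty((1/2,3/2))$ with $\g\equiv 1$ in a neighborhood of $1$. Since $1-\g(s/\l)$ vanishes near $s=\l$, the operator $(P-\l\mp i 0)^{-N}(1-\g(P/\l))$ is a well-defined bounded function of $P$, independent of the $\pm i 0$ prescription, so $(R_+(\l)^N-R_-(\l)^N)(1-\g(P/\l))=0$. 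Consequently,
\begin{equation*}
\frac{d^{N-1}}{d\l^{N-1}}E'(\l) = \frac{(N-1)!}{2\pi i}\bigl(R_+(\l)^N - R_-(\l)^N\bigr)\g(P/\l).
\end{equation*}

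Since $\c>N-1/2$ is exactly what Lemma \ref{scaleretest} requires, applying it with this $\g$ for each sign yields
\begin{equation*}
\bigl\|\jap{x}^{-\c}R_\pm(\l)^N \g(P/\l)\jap{x}^{-\c}\bigr\|\lesssim\l^{\a(N,\c)}.
\end{equation*}
For (i), the assumption $\c\geq N(1/2+(2k-1)\m/(4k))$ gives $\a(N,\c)\geq 0$, so the right-hand side is uniformly bounded on $(0,Z_0]$. For the bound in (ii), the strict inequality gives $\a(N,\c)>0$, which is the stated decay estimate.

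For the H\"older continuity in (ii), the plan is to transfer the $\l$-dependence to the rescaled operator via the scaling identity $R_\pm(\l)^N=\l^{-N}D_\l(P_h-1\mp i0)^{-N}D_\l^{-1}$ with $h=\l^{1/\m-1/(2k)}$. The $\l$-dependence then factors through the semiclassical parameter $h$, the dilation $D_\l$, and the rescaled weight $\jap{\l^{-1/\m}x}^{-\c}$, each of which varies smoothly in $\l$. Combined with the uniform LAP of Theorem \ref{scalelap} and the Mourre-type derivative estimates in Lemma \ref{scaleB_jbdd}, this yields a H\"older estimate within each dyadic window $\l\in[2^{-j-1}Z_0,2^{-j}Z_0]$; concatenating across dyadic scales using the decay $\l^{\a(N,\c)}$ then gives global H\"older continuity on $(0,Z_0]$. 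The main obstacle, which this rescaling approach is designed to circumvent, is that the weight constraint $\c>N-1/2$ does not by itself allow us to apply Lemma \ref{scaleretest} directly at level $N+1$ (as would be required by the naive differentiation identity $\pa_\l R_\pm(\l)^N=NR_\pm(\l)^{N+1}$ sandwiched by $\jap{x}^{-\c}$); passing to the semiclassical picture trades this obstruction for the uniform-in-$h$ Mourre machinery already established in Section \ref{sectionMourre}.
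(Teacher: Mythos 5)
Your treatment of part (i) and of the polynomial bound in part (ii) matches the paper: the key observation that $(R_+(\l)^N-R_-(\l)^N)(1-\g(P/\l))=0$ lets you insert the cutoff and apply Lemma \ref{scaleretest} at level $N$ under the hypothesis $\c>N-1/2$, and the sign of $\a(N,\c)$ then discriminates between boundedness and decay. That part is fine.

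For the H\"older continuity in (ii) you take a genuinely different route from the paper. The paper's one-line proof invokes complex interpolation: apply the bound of part (i) at level $N$ (weight $\c_1\approx N(\tfrac12+\tfrac{2k-1}{4k}\m)$) and at level $N+1$ (weight $\c_2\approx (N+1)(\tfrac12+\tfrac{2k-1}{4k}\m)$), so that $\jap{x}^{-\c_1}\frac{d^{N-1}}{d\l^{N-1}}E'\jap{x}^{-\c_1}$ is bounded and $\jap{x}^{-\c_2}\frac{d^{N}}{d\l^{N}}E'\jap{x}^{-\c_2}$ is bounded, and then interpolate the difference $F(\l_1)-F(\l_2)=\int_{\l_2}^{\l_1}F'$ between the two weight levels; for $\c=(1-\theta)\c_1+\theta\c_2$ this gives $\theta$-H\"older continuity with no further semiclassical input. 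Your approach via rescaling and dyadic decomposition can be made to work, and the concatenation-using-decay step you describe is sound, but as written there is a real gap in the middle: with the scaling identity $R_\pm(\l)^N=\l^{-N}D_\l(P_{h(\l)}-1\mp i0)^{-N}D_\l^{-1}$, the $\l$-dependence enters through $h(\l)$, and differentiating $(P_h-1\mp i0)^{-N}$ in $h$ reintroduces an extra resolvent factor --- precisely the obstruction you set out to avoid. Mourre theory and Lemma \ref{scaleB_jbdd} give H\"older continuity in the \emph{spectral} parameter at fixed $h$, not in $h$. To repair this you should freeze the scale at the center $\l_0$ of each dyadic window and instead write $R_\pm(\l)^N=\l_0^{-N}D_{\l_0}(P_{h(\l_0)}-\l/\l_0\mp i0)^{-N}D_{\l_0}^{-1}$, so that the $\l$-dependence lives entirely in the spectral parameter $\l/\l_0\in[\tfrac12,2]$ where the Mourre machinery supplies uniform-in-$h$ H\"older continuity. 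With that fix your argument goes through, but the paper's interpolation is substantially shorter and avoids any additional semiclassical analysis.
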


\begin{proof}
The part $(i)$ follows from Lemma \ref{scaleretest} and the fact that $\g(P/\l)E'(\l)=E'(\l)$ if $\g=1$ near $1$. The part $(ii)$ follows from the part $(i)$ and complex interpolation. 
\end{proof}

\begin{proof}[Proof of Theorem \ref{genmainthm}]
Now Theorem \ref{genmainthm} follows from Corollary \ref{spmesest}, the formula $(H-z)^{-1}1_{[0,1]}(H)=\int_{0}^1E'(\l)(\l-z)^{-1}d\l$ (which is a consequence of the spectrum theorem), and the inequality
\begin{align*}
\sup_{0<\e\leq 1}\left|\int_{-1}^1\frac{\f(s)}{s\mp i\e}  ds\right|\lesssim& \|\f\|_{C^{\a}},\quad \f\in C_c^{\a}(\re),\quad 0<\a\leq 1.
\end{align*}
The identity $R_+^N(0)=R_-^N(0)$ follows from Proposition \ref{spmesest} $(ii)$: $\frac{d^{N-1}}{d\l^{N-1}}E'(0)=0$.

\end{proof}

\section{High-frequency estimates and time decay of the propagator }

\subsection{High-frequency estimates}\label{subsechigh}

\begin{thm}\label{thmhighlap}
Let $N\in\mathbb{Z}_{\geq 1}$, $\c>N-\frac{1}{2}$ and $Z_0>0$. Then $\jap{x}^{-\c}R_{\pm}(z)^N\jap{x}^{-\c}$ is H\"older continuous in $B(L^2)$ on $\{z\in \overline{\mathbb{C}}_{\pm}\mid |z|\geq Z_0\}$ and
\begin{align}\label{highlong}
\sup_{z\in \overline{\mathbb{C}}_{\pm},\,|z|\geq Z_0}|z|^{\frac{N}{2}\left(2-\frac{1}{m}\right)}\|\jap{x}^{-\c}R_{\pm}(z)^N\jap{x}^{-\c}\| <\infty.
\end{align}

\end{thm}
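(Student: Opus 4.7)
The strategy mirrors the low-frequency argument of Section \ref{sectionMourre}, now applied at high frequency. Introduce the semiclassical parameter $h := |z|^{-1/(2m)}$ (small for $|z|$ large) and the semiclassical conjugate operator $A_h := hA$, where $A := (x \cdot D_x + D_x \cdot x)/2$. This choice is natural because on the spectral support $\{P \sim |z|\}$ the momentum satisfies $|\x| \sim h^{-1}$, so heuristically $A_h = hA \sim h \cdot x \cdot h^{-1} = x$; that is, $\jap{A_h}$ and $\jap{x}$ are comparable there, and the $\jap{A_h}$-weighted Mourre bound should translate to a $\jap{x}$-weighted bound without loss in the spectral parameter.

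Next, verify the semiclassical Mourre estimate
\[
E_P(I_{|z|})[P, iA_h] E_P(I_{|z|}) \geq c_0 |z|^{1 - 1/(2m)} E_P(I_{|z|}), \qquad I_{|z|} := [|z|/2, 2|z|],
\]
for $|z| \geq Z_0$ with $Z_0$ large. Indeed $[P, iA] = (\x \cdot \pa_{\x} p_0)(D_x) - x \cdot \pa_x V(x)$; Assumption \ref{symbolass}(ii) yields $(\x\cdot\pa_{\x} p_0)(D_x) \geq c_0 p_0(D_x) \sim c_0 |z|$ on the spectral support (since $V$ is bounded while $p_0(D_x) = P - V \sim |z|$ there), whereas $x \cdot \pa_x V = O(1)$ is negligible at high frequency; multiplication by $h$ yields the claim. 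The iterated-commutator bounds $\|\mathrm{ad}_{A_h}^j(P + i)^{-1}\| \lesssim h^j$ hold for each $j \in \ze_{\geq 1}$, proved analogously to Lemma \ref{scaleB_jbdd} and using Theorem \ref{Rieszthm} to handle the potential contributions to $\mathrm{ad}_A^j P$. Together with Assumption \ref{abevass} (absence of embedded eigenvalues), the Mourre theorem of \cite{N1} then yields
\[
\|\jap{A_h}^{-\c} R_\pm(z)^N \jap{A_h}^{-\c}\| \lesssim \bigl(|z|^{1 - 1/(2m)}\bigr)^{-N} = |z|^{-\frac{N}{2}\left(2 - \frac{1}{m}\right)}
\]
uniformly for $|z| \geq Z_0$ and $\c > N - 1/2$, together with H\"older continuity in $z$.

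Finally, change the weights from $\jap{A_h}$ to $\jap{x}$ via a high-frequency analog of Lemma \ref{Mouweichange}. One proves $\|\jap{x}^{-\c}(P + |z|)^{-\c'}\jap{A_h}^{\c}\| \lesssim 1$ (for $\c' = \max(\c/(2m), \c/(2k))$) uniformly in $|z| \geq Z_0$, by pseudodifferential calculations in the scattering symbol class of Subsection \ref{subsecscsym}, exploiting that $A_h = hA$ has principal symbol $hx\cdot\x$ comparable to $x$ on $\{P \sim |z|\}$. Inserting this identity and a spectral cut-off $\chi(P/|z|)$ with $\chi \in C_c^{\infty}((1/4, 4))$ and $\chi = 1$ near $1$ on both sides of the Mourre bound, and treating the off-spectral-support contribution by the direct bound $\|(P - z)^{-1}(1-\chi)(P/|z|)\| \lesssim |z|^{-1}$ (which is far better than needed), yields the stated $\jap{x}$-weighted estimate; H\"older continuity in $z$ transfers directly from the Mourre bound. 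The main obstacle is this uniform change-of-weights lemma: because $A_h$ depends on $|z|$, the pseudodifferential calculus must be carried out with explicit tracking of the semiclassical parameter $h = |z|^{-1/(2m)}$, in close parallel to the low-frequency argument of Subsection \ref{subsecresLAP}.
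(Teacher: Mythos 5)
Your overall strategy matches the paper's: rescale to a semiclassical regime, apply semiclassical Mourre theory to get $\jap{A_h}$-weighted bounds, then convert $\jap{A_h}$-weights to $\jap{x}$-weights by a change-of-weight lemma. (The paper packages this slightly differently, replacing $P$ by the rescaled operator $\tilde P_h := h^{2m}P$ considered at spectral value $\l = z/|z|$, and citing \cite[Theorem 1]{N1} directly, but this is a cosmetic normalization choice; your version, keeping $P$ at energy $z$ and rescaling only the conjugate operator, is equivalent.)

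There is however a genuine gap. Your argument only covers $|z|\ge Z_0$ \emph{with $Z_0$ large}, because the semiclassical analysis requires $h=|z|^{-1/(2m)}$ small, and you say so yourself (``for $|z|\ge Z_0$ with $Z_0$ large''). The theorem, though, asserts the bound for every $Z_0>0$. You must separately treat the compact intermediate region $\{\, Z_0 \le |z| \le Z_1\,\}$, where $Z_1$ is the large threshold from the semiclassical part. On that region the estimate follows from the classical (non-semiclassical) Mourre theory of \cite{JMP} combined with Assumption~\ref{abevass}: the conjugate operator $A=\tfrac12(x\cdot D_x+D_x\cdot x)$ gives a strict Mourre estimate at each energy in $(0,\infty)$ (by the Virial-type conditions in Assumption~\ref{symbolass}~(ii) and Assumption~\ref{potass}~(iii), together with compactness of $x\cdot\pa_xV$ relative to $P$), but only after the kernel of the positive commutator projected onto the spectral subspace has been removed, which is exactly where the absence of embedded eigenvalues is needed. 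Your invocation of Assumption~\ref{abevass} is therefore misplaced: it is not needed for the semiclassical large-$|z|$ regime (where the Mourre inequality holds with no spectral obstruction), but it \emph{is} needed for the compact region you omit. Adding a one-sentence reduction ``by standard Mourre theory and Assumption~\ref{abevass} it suffices to treat $|z|\gg 1$'' would close the gap and match the paper.

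A smaller technical remark: your stated change-of-weight estimate $\|\jap{x}^{-\c}(P+|z|)^{-\c'}\jap{A_h}^{\c}\|\lesssim 1$ is, after untangling the normalization $(P+|z|)^{-\c'}=|z|^{-\c'}(\tilde P_h+1)^{-\c'}$, \emph{weaker} by a factor $|z|^{\c'}$ than what is actually needed when you insert the spectral cutoff $\chi(P/|z|)$ (since $(P+|z|)^{\c'}\chi(P/|z|)\sim |z|^{\c'}$). The correctly normalized version, parallel to Lemma~\ref{Mouweichange}, is $\|\jap{x}^{-\c}(\tilde P_h+1)^{-\c'}\jap{A_h}^{\c}\|\lesssim 1$ with $\tilde P_h = h^{2m}P$; this is what the argument actually requires, and is what your appeal to ``a high-frequency analog of Lemma~\ref{Mouweichange}'' presumably intends.
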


\begin{rem}
We do not need positive repulsive conditions (Assumption \ref{potass} $(ii), (iii)$) to obtain this result. They are needed for the low-frequency regimes only.
\end{rem}

\begin{proof}
This theorem is an immediate consequence of the standard and semiclassical Mourre theory and the assumption on absence of eigenvalues. Here we just give an outline of the proof. 

By a simple application of the Mourre theory \cite{JMP} and Assumption \ref{abevass}, it suffices to prove $(\ref{highlong})$ for $|z|>>1$. 
 Set $h=|z|^{-\frac{N}{2m}}<<1$ and consider the semiclassical operator $\tilde{P}_h:=h^{2m}P=h^{2m}p_0(D_x)+h^{2m}V$ and $A_h:=\frac{hx\cdot D_x+hD_x\cdot x}{2}$. Then the semiclassical Mourre theory (for example, see \cite[Theorem 1]{N1} ) with Assumptions \ref{symbolass} $(ii)$ and \ref{potass} $(i)$ implies
\begin{align*}
\sup_{|\l|=1,\,\, \l\in \overline{\mathbb{C}}_{\pm}}\|\jap{x}^{-\c}(\tilde{P}_h-\l)^{-N}\jap{x}^{-\c}\|\lesssim \frac{1}{h^N}
\end{align*}
for $h>0$ sufficiently small.  Multiplying $h^{2mN}$ of both sides of this inequality, we obtain $(\ref{highlong})$ for $|z|>>1$. Here, the reason why the semiclassical Mourre estimate \cite[$(\mathrm{H}5;1)$]{N1} holds uniformly in $h$ is that $h^{2m}V$ is sufficiently small when $0<h\ll 1$. We omit the detail.
\end{proof}

\subsection{Local time decay estimates}\label{subsectimedecay}

Now we prove Theorem \ref{fracmainthm} $(ii)$. We follow the argument in \cite[Theorem 4.2]{JMP}, see also \cite[\S 5]{BB}. Since $e^{-itP}$ is unitary, we may assume $|t|\geq 1$. By the general interpolation inequality $\|\jap{x}^{-\theta s}A\jap{x}^{-\theta s}\|\leq \|A\|^{1-\theta}\|\jap{x}^{-s}A\jap{x}^{- s}\|^{\theta}$ for $\theta\in [0,1]$, it suffices to prove that for each $s>0$, there exists $\c>0$ such that
\begin{align}\label{timedecayint}
\|\jap{x}^{-\c}\jap{P}^{-a}e^{-itP}\jap{x}^{-\c}\|=O(\jap{t}^{-s}),
\end{align}
where $a=0$ when $m> 1/2$ and $a>0$ is chosen appropriately when $m\leq 1/2$.

Take an integer $N>s'$ such that $\frac{N}{2}\left(2-\frac{1}{m}\right)>1$ when $m> 1/2$. Define $a=-\frac{N}{2}\left(2-\frac{1}{m}\right)+2$ when $m\leq 1/2$.
We have $\jap{P}^{-a}e^{-itP}=\int_0^{\infty}e^{-it\l}\jap{\l}^{-a} E'(\l) d\l$ due to the spectral theorem. 
By integration by parts, 
\begin{align}
\jap{x}^{-\c}\jap{P}^{-a}e^{-itP}\jap{x}^{-\c}=&-\sum_{j=0}^{N-1}\frac{1}{(it)^{j+1}}\left[e^{-it\l}\jap{\l}^{-a}\jap{x}^{-\c}\frac{d^{j}}{d\l^{j}}E'(\l)\jap{x}^{-\c} \right]_{\l=0}^{\infty}\label{propaintby}\\
&+\frac{1}{(it)^{N}}\int_0^{\infty}e^{-it\l}\jap{\l}^{-a}\jap{x}^{-\c}\frac{d^{N}}{d\l^{N}}E'(\l)\jap{x}^{-\c}d\l.\nonumber
\end{align}

By \eqref{spresid}, Corollary \ref{spmesest}, and Theorem \ref{thmhighlap}, if we take $\c>0$ large enough, we have  $\jap{x}^{-\c}E'(\l)\jap{x}^{-\c}\in C^{N}([0,\infty);B(L^2))$, $\jap{x}^{-\c}\frac{d^{j}}{d\l^j}E'(0)\jap{x}^{-\c}=0$ for $j=0,\hdots,N-1$ and  
\begin{align*}
\left\|\jap{x}^{-\c}\frac{d^{j}}{d\l^{j}}E'(\l)\jap{x}^{-\c}\right\|\lesssim \jap{\l}^{-\frac{j+1}{2}\left(2-\frac{1}{m}\right)}\ \quad \text{for} \quad j=1,\hdots,N.
\end{align*}
There implies that the boundary values appeared in the right hand side of \eqref{propaintby} vanish. Thus we have
\begin{align*}
\|\jap{x}^{-\c}\jap{P}^{-a}e^{-itP}\jap{x}^{-\c}\|\leq& |t|^{-N}\left\|\int_0^{\infty}e^{-it\l}\jap{\l}^{a}\jap{x}^{-\c}\frac{d^{N}}{d\l^{N}}E'(\l)\jap{x}^{-\c}d\l\right\|\\
\lesssim&|t|^{-N}\int_{0}^{\infty}\jap{\l}^{-a}\jap{\l}^{-\frac{N}{2}\left(2-\frac{1}{m}\right)}d\l\lesssim |t|^{-N}.
\end{align*}
Since we have assumed $N>s'$ and $|t|\geq 1$, this proves \eqref{timedecayint}.

\appendix

\section{Absence of eigenvalues under a Virial condition}

In this short section, we give a sufficient condition so that our operator $P$ has no eigenvalues.
The following argument is a more or less standard, see \cite[Theorem 1.11]{FSWY}, \cite[Proposition II.4]{Mo}, or \cite[Theorem XIII.59, Theorem XIII.60]{RS}.

\begin{thm}\label{Virthm}
Let $p_0\in C^{\infty}(\re^n;[0,\infty))$ satisfy Assumption \ref{symbolass}. Suppose that a non-negative function $V\in L^{\infty}(\re^n)\cap C^{1}(\re^n;[0,\infty))$ satisfying $x\cdot \pa_xV\in L^{\infty}(\re^n)$ and $-x\cdot \pa_xV(x)\geq 0$ for all $x\in \re^n$. Then the self-adjoint operator $P=p_0(D_x)+V(x)$ with domain $H^{2m}(\re^n)$ has no eigenvalues.

\end{thm}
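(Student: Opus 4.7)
The plan is a classical Virial argument with respect to the generator of dilations $A = \frac{1}{2}(x\cdot D_x + D_x\cdot x)$, whose unitary group is $e^{itA}\psi(x) = e^{nt/2}\psi(e^t x)$. A direct Fourier computation gives $\widehat{e^{itA}\psi}(\xi) = e^{-nt/2}\hat\psi(e^{-t}\xi)$, which yields $\|e^{itA}\psi\|_{H^{2m}} \le e^{2m|t|}\|\psi\|_{H^{2m}}$, so $e^{itA}$ preserves $D(P) = H^{2m}(\re^n)$. A standard computation shows
\[
[P, iA] = (\xi\cdot \pa_\xi p_0)(D_x) - x\cdot \pa_x V(x),
\]
and this commutator extends to a bounded operator from $H^{2m}(\re^n)$ to $L^2(\re^n)$: the potential part is an $L^\infty$-multiplier by hypothesis, while Assumption \ref{symbolass}(i) gives $|\xi\cdot\pa_\xi p_0(\xi)| \le C\jap{\xi}^{2m}$, so the kinetic part is $H^{2m}\to L^2$ bounded. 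These are precisely the hypotheses needed to apply the abstract Virial theorem, giving
\[
(\psi, [P, iA]\psi) = 0 \quad \text{whenever} \quad P\psi = E\psi, \,\, \psi \in H^{2m}(\re^n).
\]

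Once this identity is in hand, the conclusion is immediate from the two positivity structures. By Assumption \ref{symbolass}(ii), $\xi\cdot\pa_\xi p_0(\xi) \ge c\, p_0(\xi)$ pointwise, and by hypothesis $-x\cdot\pa_x V(x) \ge 0$. The spectral theorem for $D_x$ then gives
\[
0 = (\psi, [P, iA]\psi) \ge c(\psi, p_0(D_x)\psi) + (\psi, (-x\cdot\pa_xV)\psi) \ge c(\psi, p_0(D_x)\psi) \ge 0,
\]
so $\int p_0(\xi)|\hat\psi(\xi)|^2\, d\xi = 0$. Assumption \ref{symbolass}(i) forces the zero set of $p_0$ to be exactly $\{0\}$ (since $p_0(\xi)\sim|\xi|^{2k}$ for $|\xi|\le 1$ and $p_0(\xi)\gtrsim\jap{\xi}^{2m}$ for $|\xi|\ge 1$), so $\hat\psi$ is supported at the origin, hence $\psi$ is a polynomial. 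Being also in $L^2(\re^n)$, it must vanish identically.

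The main obstacle is the rigorous justification of the Virial identity, because $\psi$ need not lie in $D(A)$: when $2m<1$, the distribution $x\cdot\nabla\psi$ is not in $L^2$ in general, so the formal manipulation $(\psi, PiA\psi) - (\psi, iAP\psi) = E(\psi, iA\psi) - E(\psi, iA\psi) = 0$ has no direct meaning. I will handle this by a bounded regularization: choose $\chi\in C_c^\infty(\re)$ with $\chi(0)=1$ and set $A_R = \chi(A/R)A$, defined via the functional calculus of $A$. Then $A_R$ is bounded and self-adjoint, the identity $(\psi, [P, iA_R]\psi) = 0$ is immediate from $P\psi = E\psi$, and the task reduces to passing to the limit $R\to\infty$. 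Using the $P$-boundedness of $[P, iA]$ and the fact that $\chi(A/R)\to I$ strongly, together with $\chi(A/R)A\psi \to A\psi$ in a distributional sense on the image of $P$, one verifies that $(\psi, [P, iA_R]\psi) \to (\psi, [P, iA]\psi)$. This is the only step requiring care; everything before and after is routine.
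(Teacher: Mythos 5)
Your proof follows the same Virial-with-dilations route as the paper: the conjugate operator $A=\frac{1}{2}(x\cdot D_x + D_x\cdot x)$, the commutator computation $[P,iA]=(\xi\cdot\partial_\xi p_0)(D_x)-x\cdot\partial_x V$, the Mourre-type lower bound $[P,iA]\geq c\,p_0(D_x)$ coming from Assumption~\ref{symbolass}~(ii) and $-x\cdot\partial_xV\geq 0$, and the observation that $p_0$ vanishes only at the origin so the eigenfunction is forced to be zero. The one structural difference is how the Virial identity $(\psi,[P,iA]\psi)=0$ itself is obtained: the paper verifies the hypotheses of Mourre's abstract Virial theorem (\cite[Proposition II.4]{Mo}) and cites it, whereas you try to reprove it from scratch via the regularization $A_R=\chi(A/R)A$. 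The facts you verify --- that $e^{itA}$ preserves $H^{2m}=D(P)$ with a locally bounded $H^{2m}\to H^{2m}$ norm, and that $[P,iA]$ extends to a bounded operator $H^{2m}\to L^2$ --- are precisely the conditions (a), (b), (c) of Mourre's definition, so the clean route is to cite the abstract result rather than sketch its proof.

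As written, the limit step in your regularization is a genuine soft spot rather than a routine verification. The phrase ``$\chi(A/R)A\psi\to A\psi$ in a distributional sense on the image of $P$'' does not parse into a usable statement, and what actually has to be shown is a convergence of quadratic forms $(\psi,[P,iA_R]\psi)\to(\psi,[P,iA]\psi)$ in the presence of the unbounded factor $A$: after writing $[P,iA_R]=[P,iA]\chi(A/R)+iA[P,\chi(A/R)]$ the first term is handled by $\chi(A/R)\to I$ strongly on $H^{2m}$ (which needs the $C_0$-group property of $e^{itA}$ on $H^{2m}$), but the second term has $A$ acting outside the commutator and controlling it is exactly the non-trivial content of Mourre's Virial theorem. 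Your regularization is the right idea --- it is essentially how the cited theorem is proved --- but the sketch does not yet establish it. I would either cite \cite[Proposition II.4]{Mo} after checking its hypotheses (as the paper does), or else carry out the $R\to\infty$ limit in detail, keeping careful track of which objects lie in $D(A)$ and in $D(P)$. Minor further remarks: the paper treats the eigenvalue $0$ separately by the trivial identity $0=(u,Pu)\geq(u,p_0(D_x)u)\geq 0$, which avoids invoking the Virial theorem at a threshold; your uniform treatment is also fine since the Virial identity holds for any eigenvalue. And the phrase ``$\hat\psi$ is supported at the origin, hence $\psi$ is a polynomial'' should just say that an $L^2$ function supported on a null set is zero a.e.
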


\begin{rem}
A typical example of $V$ satisfying the assumptions of this theorem and Assumption \ref{potass} is $V(x)=c\jap{x}^{-\m}$ for $c>0$.
\end{rem}

\begin{proof} 
Since $p_0\geq 0$ and $V\geq 0$, the spectrum $\s(P)$ of $P$ is contained in $[0,\infty)$.

First, we show that $P$ has no positive eigenvalues. Let $\l>0$ be an eigenvalue of $P$ and $u\in L^2(\re^n)\setminus \{0\}$ be an eigenfunction of $p$ associated with $\l$. Define $A=\frac{x\cdot D_x+D_x\cdot x}{2}$. 
By Assumption \ref{symbolass} and our assumptions $V,x\cdot \pa_xV\in L^{\infty}(\re^n)$, the operator $P$ and $A$ satisfy the condition $(a),(b),(c)$ in \cite[1.Definition]{Mo} ($P=H$ in the notation there). Then the Virial theorem (\cite[Proposition II.4]{Mo}) shows that $(u,[P,iA]u)=0$. On the other hand, due to Assumption \ref{symbolass} $(ii)$ and the assumption $-x\cdot \pa_xV(x)\geq 0$, we have $[P,iA]=(\x\cdot \pa_{\x}p_0)(D_x)-x\cdot \pa_xV(x)\geq cp_0(D_x)$ with a constant $c>0$. These implies $(u,p_0(D_x)u)=0$. Since $p_0(\x)\geq 0$ and $\{\x\in \re^n\mid p_0(\x)=0\}=\{0\}$ (due to Assumption \ref{symbolass} $(ii)$), we obtain $u=0$, which is a contradiction.

Finally, we prove that $0$ is not an eigenvalue of $P$. If $u\in L^2(\re^n)$ satisfies $Pu=0$, then $0=(u,Pu)\geq (u,p_0(D_x)u)\geq 0$ by $p_0(\x),V(x)\geq 0$. Thus we obtain $u=0$ in the same way as above.

\end{proof}

\section{Some proof of the results in Subsection \ref{subsectionslvame}}\label{appendixslvamet}

In this appendix, we prove Proposition \ref{propslowvary} and Lemma \ref{lemweight}.
We recall that $g_{(x,\x)}=\jap{x}^{-2}dx^2+\jap{x}^{\m'}\jap{\x}^{-2}d\x^2$, $0<\m'(=\m/k)<2$, and $A=\begin{pmatrix}
0&-I\\
I&0
\end{pmatrix}$.

\subsection*{Proof of Proposition \ref{propslowvary} $(i)$} We need to find $c>0$ such that $g_{(x,\x)}(y,\y)\leq c$ implies $g_{(x,\x)}(z,\z)\lesssim g_{(x+y,\x+\y)}(z,\z)\lesssim g_{(x,\x)}(z,\z)$.
We first observe that
$g_{(x,\x)}(y,\y)\leq c$ is equivalent to $\jap{x}^{-2}|y|^2+\jap{x}^{\m'}\jap{\x}^{-2}|\y|^2\leq c$ and suppose that this inequality holds. If we take $c>0$ small enough, then $ \jap{x+y}^2\sim \jap{x}^2$ and $\jap{\x+\y}^2\sim \jap{\x}^2$ holds. Then
\begin{align*}
\frac{|z|^2}{\jap{x}^2}+\jap{x}^{\m'}\frac{|\z|^2}{\jap{\x}^2}\lesssim  \frac{|z|^2}{\jap{x+y}^2}+\jap{x+y}^{\m'}\frac{|\z|^2}{\jap{\x+\y}^2}\lesssim\frac{|z|^2}{\jap{x}^2}+\jap{x}^{\m'}\frac{|\z|^2}{\jap{\x}^2},
\end{align*}
which is equivalent to $g_{(x,\x)}(z,\z)\lesssim g_{(x+y,\x+\y)}(z,\z)\lesssim g_{(x,\x)}(z,\z)$.

\subsection*{Calculation of $g^{\s}$ and $\s_g$: Proof of Proposition \ref{propslowvary} $(iii)$}

First, we show 
\begin{align}\label{gAcal}
g^{A}_{(x,\x)}=\jap{\x}^2\jap{x}^{-\m'}dx^2+\jap{x}^2d\x^2.
\end{align}
We note $A(z,\z)=(-\z,z)$and $g_{(y,\y)}(A(z,\z))=g_{(y,\y)}(-\z,z)=\jap{y}^{-2}|\z|^2+\jap{y}^{\m'}\jap{\y}^{-2}|z|^2$. By the definition of $g^A$: \eqref{gAdef}, we have
\begin{align*}
g^{A}_{(x,\x)}(y,\y)=&\sup_{(z,\z)\neq (0,0)}\frac{|((y,\y),(z,\z))|^2}{g_{(x,\x)}(A(z,\z))}=\sup_{(z,\z)\neq (0,0)}\frac{|y\cdot z+\y\cdot \z|^2}{\jap{x}^{-2}|\z|^2+\jap{x}^{\m'}\jap{\x}^{-2}|z|^2}\\
=&\sup_{(z,\z)\neq (0,0)}\frac{|y\cdot \jap{\x}\jap{x}^{-\m'/2}z+\y\cdot \jap{x}\z|^2}{|\z|^2+|z|^2}=\sup_{|z|^2+|\z|^2=1}|y\cdot \jap{\x}\jap{x}^{-\m'/2}z+\y\cdot \jap{x}\z|^2
\end{align*}
Its supremum is achieved when $(\jap{\x}\jap{x}^{-\m'/2}y,\jap{x}\y)\parallel (z,\z)$, that is
\begin{align*}
(z,\z)=\frac{1}{\sqrt{\jap{x}^2\jap{x}^{-\m'}|y|^2+\jap{x}^2|\y|^2}}(\jap{\x}\jap{x}^{-\m'/2}y,\jap{x}\y).
\end{align*}
Then
\begin{align*}
g^{A}_{(x,\x)}(y,\y)=\sqrt{\jap{\x}^2\jap{x}^{-\m'}|y|^2+\jap{x}^2|\y|^2}^2=\jap{\x}^2\jap{x}^{-\m'}|y|^2+\jap{x}^2|\y|^2,
\end{align*}
which gives $g^{A}_{(x,\x)}=\jap{\x}^2\jap{x}^{-\m'}dx^2+\jap{x}^2d\x^2$.

Next, we show $\s_g(x,\x)=\jap{x}^{\frac{\m'}{2}-1}\jap{\x}^{-1}$. By the definition of $\s_g$: \eqref{sigmagdef}, we have
\begin{align*}
\s_g(x,\x)^2=&\sup_{(y,\y)\in \re^{2n}\setminus \{0\}}\frac{g_{(x,\x)}(y,\y)}{g^{A}_{(x,\x)}(y,\y)}=\sup_{(y,\y)\in \re^{2n}\setminus\{0\}}\frac{\jap{x}^{-2}|y|^2+\jap{\x}^{-2}\jap{x}^{\m'}|\y|^2 }{\jap{\x}^2\jap{x}^{-\m'}|y|^2+\jap{x}^2|\y|^2}\\
=&\jap{x}^{\m'-2}\jap{\x}^{-2}\sup_{(y,\y)\in \re^{2n}\setminus\{0\}}1=\jap{x}^{\m'-2}\jap{\x}^{-2}.
\end{align*}

\subsection*{Proof of Proposition \ref{propslowvary} $(ii)$} We need to show $g_{(x,\x)}(z,\z)\lesssim (1+g_{(x,\x)}^A(x-y,\x-\y))^Ng_{(y,\y)}(z,\z)$ for some $N>0$. By the definition of $g$ and $g^A$, we have $g_{(x,\x)}^A(x-y,\x-\y)=\jap{\x}^2\jap{x}^{-\m'}|x-y|^2+\jap{x}^2|\x-\y|^2$ and
\begin{align*}
\frac{g_{(x,\x)}(z,\z)}{g_{(y,\y)}(z,\z)}=\frac{\jap{x}^{-2}|z|^2+\jap{\x}^{-2}\jap{x}^{\m'}|\z|^2}{\jap{y}^{-2}|z|^2+\jap{\y}^{-2}\jap{y}^{\m'}|\z|^2}\leq \jap{y}^2\jap{x}^{-2}+\jap{y}^{\m'}\jap{x}^{-\m'}\jap{\y}^2\jap{\x}^{-2}.
\end{align*}
We observe the three elementary inequalities: $\jap{y}^2\jap{x}^{-2}\lesssim (1+\jap{x}^{-\m'}|x-y|^2)^{\frac{2}{2-\m'}}\leq(1+g^A_{(x,\x)}(x-y,\x-\y))^{\frac{2}{2-\m'}}$, $\jap{y}^{\m'}\jap{x}^{-\m'}\lesssim (1+g^A_{(x,\x)}(x-y,\x-\y))^{\frac{\m'}{2-\m'}}$ and $\jap{\y}^2\jap{\x}^{-2}\lesssim (1+|\x-\y|^2)\lesssim (1+g^A_{(x,\x)}(x-y,\x-\y))$, we obtain $g_{(x,\x)}(z,\z)\lesssim (1+g_{(x,\x)}^A(x-y,\x-\y))^Ng_{(y,\y)}(z,\z)$ with $N=\frac{2}{2-\m'}$. This completes the proof of Proposition \ref{propslowvary} $(ii)$.

Now we have finished the proof of Proposition \ref{propslowvary}. We turn to prove Lemma \ref{lemweight}.

\subsection*{Uniform $g$-continuity of $p+\l$}

First, we prove the uniform $g$-continuity of $p+\l$, that is to find $c>0$ such that $g_{(x,\x)}(y,\y)\leq c$ implies $p(x,\x)+\l\lesssim p(x+y,\x+\y)+\l\lesssim p(y,\y)+\l$ uniformly in $\l\in [0,1]$. Clearly, we may assume $\l=0$.

 Suppose that $g_{(x,\x)}(y,\y)\leq c\Leftrightarrow \jap{x}^{-2}|y|^2+\jap{x}^{\m'}\jap{\x}^{-2}|\y|^2\leq c$ for sufficiently small $c>0$. Then we have $\jap{x+y}\sim \jap{x}$ and
\begin{align*}
|\x|^2\leq 2|\x+\y|^2+2|\y|^2\leq& 2|\x+\y|^2+2c\jap{x}^{-\m'}+2c|\x|^2.
\end{align*}
If $c>0$ is sufficiently small, we have $|\x|^2\lesssim |\x+\y|^2+\jap{x}^{-\m'}$ and hence
\begin{align*}
|\x|^2+\jap{x}^{-\m'}\lesssim |\x+\y|^2+\jap{x+y}^{-\m'},
\end{align*}
which implies $p(x,\x)\lesssim p(x+y,\x+\y)$ by Assumptions \ref{symbolass} and \ref{potass}. The inequality $p(x+y,\x+\y)\lesssim p(y,\y)$ is similarly proved.

\subsection*{Uniform $(A,g)$-temperateness of $p+\l$}
By \eqref{gAcal}, we have $g^{A}_{(x,\x)}((x-y,\x-\y))=1+\jap{\x}^2\jap{x}^{-\m'}|x-y|^2+\jap{x}^2|\x-\y|^2$. Thus it suffices to prove
\begin{align*}
p_0(\x)+\jap{x}^{-\m}+\l\lesssim (1+\jap{\x}^2\jap{x}^{-\m'}|x-y|^2+\jap{x}^2|\x-\y|^2 )^N(p_0(\y)+\jap{y}^{-\m}+\l)
\end{align*}
since $V(x)\sim \jap{x}^{-\m}$ by Assumption \ref{potass}. Clearly, we may assume $\l=0$.

To prove this, we use the following lemmas.

\begin{lem}\label{xylemma}
For $x,y\in \re^n$, $\jap{x}^{2k}\jap{y}^{-\m}<2^{-k}$ implies $\jap{y}\lesssim |x-y|$.
\end{lem}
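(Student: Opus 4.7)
The hypothesis $\jap{x}^{2k}\jap{y}^{-\m} < 2^{-k}$ simplifies dramatically after taking a $k$-th root and using $\m = k\m'$: it is equivalent to
\[
2\jap{x}^2 < \jap{y}^{\m'}.
\]
From this, two consequences are immediate: $|x| \leq \jap{x} < \jap{y}^{\m'/2}/\sqrt{2}$, and (using $\jap{x} \geq 1$) $\jap{y}^{\m'} > 2$, so $\jap{y} > 2^{1/\m'}$. A brief monotonicity check on $g(t) = t^2 - 1 - t^{\m'}/2$ restricted to $[2^{1/\m'},\infty)$ (using $\m' < 2$ so that $g'(t) = 2t - (\m'/2)t^{\m'-1} > 0$) shows that $|y|^2 = \jap{y}^2 - 1 > \jap{y}^{\m'}/2 > |x|^2$ throughout this range, so $|y| > |x|$ and the reverse triangle inequality gives
\[
|x-y| \;\geq\; |y| - |x| \;\geq\; \sqrt{\jap{y}^2 - 1} - \tfrac{1}{\sqrt{2}}\jap{y}^{\m'/2} \;=\; \jap{y}\,\phi(\jap{y}),
\]
where $\phi(t) := \sqrt{1 - t^{-2}} - \tfrac{1}{\sqrt{2}}\,t^{\m'/2 - 1}$.

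The proof is then reduced to showing $c := \inf_{t \geq 2^{1/\m'}}\phi(t) > 0$, which will yield $|x-y| \geq c\jap{y}$ as desired. Since $\m'/2 - 1 < 0$, the function $\phi$ is continuous on $[2^{1/\m'},\infty)$ with $\phi(t) \to 1$ as $t \to \infty$, so by compactness only positivity at the left endpoint needs checking. A direct computation gives
\[
\phi(2^{1/\m'}) = \sqrt{1 - a^2} - a, \qquad a := 2^{-1/\m'},
\]
which is strictly positive exactly when $a < 1/\sqrt{2}$; this is equivalent to $\m' < 2$, which is precisely our standing hypothesis.

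The only real obstacle is the endpoint estimate at $t = 2^{1/\m'}$: positivity of $\phi$ there is equivalent to $\m' < 2$ (equivalently $\m < 2k$), so the scaling restriction on $\m$ enters the argument in a sharp way, and the constant $c$ necessarily degenerates to $0$ as $\m' \uparrow 2$. Everything else is elementary algebra and continuity, with the hypothesis being used exclusively to (i) control $|x|$ by a sublinear power of $\jap{y}$ and (ii) force $\jap{y}$ into the regime where that sublinear control beats $\jap{y}$ itself.
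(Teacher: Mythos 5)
Your proof is correct, but it takes a noticeably more circuitous route than the paper's, and one step is phrased in a logically imprecise way. After taking $k$-th roots you work with $2\jap{x}^2 < \jap{y}^{\m'}$, which keeps the fractional exponent $\m'$ in play and forces the subsequent analysis of $g$ and $\phi$ with an explicit endpoint evaluation. The paper instead uses $\jap{y}\geq 1$ together with $\m<2k$ to replace $\jap{y}^{-\m}$ by the smaller $\jap{y}^{-2k}$ \emph{before} taking $k$-th roots; this gives directly $1+|x|^2 < \tfrac{1}{2}(1+|y|^2)$, whence $|y|>1$ and $|y|>\sqrt{2}|x|$, so $\jap{y}\lesssim|y|\lesssim|y|-|x|\leq|x-y|$ in two lines. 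In short, trading $\m$ for $2k$ at the outset kills the fractional exponent and makes the rest trivial. One slip in your writeup: the sentence ``by compactness only positivity at the left endpoint needs checking'' is not a valid inference --- continuity, $\phi(2^{1/\m'})>0$, and $\phi(t)\to 1$ at infinity do not by themselves preclude $\phi$ from dipping below zero somewhere in between. The conclusion is nonetheless true, and you already have the ingredients: either observe that $\phi'(t)>0$ for $t>1$ (since $\m'/2-1<0$ makes both terms of $\phi'$ positive), so $\phi$ is increasing and its infimum on the ray is the endpoint value; or note that your $g$-monotonicity, combined with $g(2^{1/\m'})>0$ (which is exactly equivalent to the endpoint computation $\phi(2^{1/\m'})>0$), already gives $\phi>0$ on the entire ray and hence a positive infimum over the compact part. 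You should make one of these justifications explicit rather than appealing to compactness alone.
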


\begin{proof}

The assumption $0<\m<2k$ implies $1+|x|^2\leq \frac{1}{2}(1+|y|^2)$ and hence $1\leq |y|$ and $2|x|^2\leq |y|^2$. Thus we have $\jap{y}\lesssim |y|\lesssim |y|-|x|\leq |x-y|$.

\end{proof}

\begin{lem}\label{xtemp}
For $k\in \re$,
$\jap{x}^{k}$ is $(A,g)$-temperate. 
\end{lem}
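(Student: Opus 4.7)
The plan is to exploit the explicit formula \eqref{gAcal} for $g^A$, which gives $g^A_{(y,\y)}((x-y,\x-\y))=\jap{\y}^2\jap{y}^{-\m'}|x-y|^2+\jap{y}^2|\x-\y|^2$, and then compare $\jap{y}$ with $\jap{x}$ in terms of $|x-y|$ in the spirit of Peetre's inequality. Writing $M:=g^A_{(y,\y)}((x-y,\x-\y))$ for brevity, the goal is to find $N=N(k)$ and $C>0$ such that $\jap{y}^k\leq C(1+M)^N\jap{x}^k$ uniformly in $x,y,\x,\y\in\re^n$.

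First I would dispose of the easy regime: if $\jap{x}$ and $\jap{y}$ are comparable (say $\jap{y}\leq 2\jap{x}$ when $k\geq 0$, or $\jap{x}\leq 2\jap{y}$ when $k<0$), then $\jap{y}^k\lesssim\jap{x}^k$ trivially and the claim holds with $N=0$. Hence only the opposite regime needs attention, in which $|x-y|$ is of the same order as the larger of $\jap{x},\jap{y}$.

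For $k>0$ and $\jap{y}\geq 2\jap{x}$, the reverse triangle inequality gives $|x-y|\geq|y|-|x|\gtrsim\jap{y}$. Dropping the second term in $M$, one obtains the crude but crucial lower bound $M\geq\jap{y}^{-\m'}|x-y|^2\gtrsim\jap{y}^{2-\m'}$. Since $\jap{x}\geq 1$, this gives $\jap{y}^k/\jap{x}^k\leq\jap{y}^k\lesssim M^{k/(2-\m')}$. The case $k<0$ is handled symmetrically in the region $\jap{x}\geq 2\jap{y}$: there $|x-y|\gtrsim\jap{x}$ and $M\geq\jap{y}^{-\m'}|x-y|^2\geq\jap{x}^{-\m'}\jap{x}^2=\jap{x}^{2-\m'}$ (using $\jap{y}\leq\jap{x}$), so $\jap{x}^{|k|}\lesssim M^{|k|/(2-\m')}$, and one concludes $\jap{y}^k/\jap{x}^k=(\jap{x}/\jap{y})^{|k|}\leq\jap{x}^{|k|}\lesssim M^{|k|/(2-\m')}$. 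In either case, the exponent $N=|k|/(2-\m')$ works.

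The main — though mild — obstacle is the factor $\jap{y}^{-\m'}$ multiplying $|x-y|^2$ in $g^A$, which shrinks as $\jap{y}$ grows and so cannot by itself control a positive power of $\jap{y}$. The constraint $0<\m'<2$ from \eqref{Pdef} is precisely what rescues the argument: it ensures $2-\m'>0$, so the bound $|x-y|^2\gtrsim\jap{y}^2$ in the non-comparable regime survives multiplication by $\jap{y}^{-\m'}$ and still produces a positive power of $\jap{y}$ (or of $\jap{x}$), from which $(A,g)$-temperateness with finite exponent follows.
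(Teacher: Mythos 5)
Your proof is correct and follows essentially the same route as the paper: split into the regime where $\langle x\rangle$ and $\langle y\rangle$ are comparable (trivial) and the regime where they are not, use the reverse triangle inequality there to get $|x-y|\gtrsim\max(\langle x\rangle,\langle y\rangle)$, and extract the bound from the $\langle x\rangle^{-\mu'}|x-y|^2$ piece of $g^A$ together with the crucial fact $2-\mu'>0$, arriving at the same exponent $N=|k|/(2-\mu')$. The only cosmetic difference is the labeling of which point serves as the base of $g^A$, which is immaterial since the temperateness condition is symmetric under swapping $\rho$ and $\rho'$.
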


\begin{proof}
Since $g^{A}_{(x,\x)}((x-y,\x-\y))=1+\jap{\x}^2\jap{x}^{-\m'}|x-y|^2+\jap{x}^2|\x-\y|^2$ by \eqref{gAcal}, it suffices to prove
\begin{align}\label{xtempgmu1}
\jap{x}^{k}\lesssim  (1+\jap{\x}^2\jap{x}^{-\m'}|x-y|^2)^{N}\jap{y}^{k}
\end{align}
with $N=\frac{1}{2-\m'}|k|$.
We note that this inequality is trivial for $k=0$.

\noindent$(i)$ First, we assume $k>0$. If $\jap{x}\lesssim \jap{y}$ holds, then \eqref{xtempgmu1} follows from  the trivial inequality $1\leq 1+\jap{\x}^2\jap{x}^{-\m'}|x-y|^2$.

Thus we may assume $\jap{y}\lesssim \jap{x}$. If either $|x-y|\leq \frac{1}{2}|x|$ or $|x|\leq 1$ holds, then we have $\jap{x}\lesssim \jap{y}$, which implies \eqref{xtempgmu1} as noted just before. If both $|x-y|> \frac{1}{2}|x|$ and $|x|\geq 1$ hold, then $\jap{x}\lesssim |x-y|$ and hence
\begin{align*}
\jap{x}^k\lesssim& \jap{x}^{-\frac{\m'}{2-\m'}k}|x-y|^{k+\frac{\m'}{2-\m'}k}=\left(\jap{x}^{-\m'}|x-y|^2 \right)^{\frac{1}{2-\m'}k}\\
\leq&(1+\jap{\x}^2\jap{x}^{-\m'}|x-y|^2)^{\frac{1}{2-\m'}k}\jap{y}^{k}
\end{align*}
since $1\leq \jap{y}^k$.

\noindent$(ii)$ Next, we assume $k<0$. If $\jap{y}\lesssim \jap{x}$ holds, then \eqref{xtempgmu1} follows from  the trivial inequality $1\leq 1+\jap{\x}^2\jap{x}^{-\m'}|x-y|^2$.

Thus we may assume $\jap{x}\lesssim \jap{y}$. If either $|x-y|\leq \frac{1}{2}|y|$ or $|y|\leq 1$ holds, then we have $\jap{y}\lesssim \jap{x}$, which implies \eqref{xtempgmu1} as noted just before. If both $|x-y|> \frac{1}{2}|y|$ and $|y|\geq 1$ hold, then $\jap{y}\lesssim |x-y|$ and hence
\begin{align*}
\jap{y}^{-k}=\jap{y}^{|k|}\underbrace{\lesssim}_{\jap{y}\lesssim |x-y|}& \jap{y}^{-\frac{\m'}{2-\m'}|k|}|x-y|^{|k|+\frac{\m'}{2-\m'}|k|}\underbrace{\lesssim}_{\jap{x}\lesssim \jap{y}} \jap{x}^{-\frac{\m'}{2-\m'}|k|}|x-y|^{|k|+\frac{\m'}{2-\m'}|k|}\\
\underbrace{\lesssim}_{\jap{y}\lesssim |x-y|}&\left(\jap{x}^{-\m'}|x-y|^2 \right)^{\frac{1}{2-\m'}|k|}\underbrace{\lesssim}_{k<0,\,\, 1\leq \jap{x}^{-k}} \left(1+\jap{x}^{-\m'}|x-y|^2 \right)^{\frac{1}{2-\m'}|k|}\jap{x}^{-k},
\end{align*}
which proves \eqref{xtempgmu1}.

\end{proof}

Due to Lemma \ref{xtemp} , it suffices to prove
\begin{align*}
p_0(\x)\lesssim (1+\jap{\x}^2\jap{x}^{-\m'}|x-y|^2+\jap{x}^2|\x-\y|^2 )^N(p_0(\y)+\jap{y}^{-\m})
\end{align*}
with some $N>0$.

\begin{lem}
For $|\x|\leq 1$, we have
\begin{align*}
p_0(\x)\lesssim (1+\jap{\x}^2\jap{x}^{-\m'}|x-y|^2+\jap{x}^2|\x-\y|^2 )^k(p_0(\y)+\jap{y}^{-\m}).
\end{align*}
\end{lem}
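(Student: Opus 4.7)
The plan is to reduce, using $p_0(\x)\sim|\x|^{2k}$ for $|\x|\leq 1$ from Assumption \ref{symbolass} $(i)$, to the pointwise estimate
\begin{equation*}
|\x|^{2k}\lesssim \bigl(1+\jap{\x}^2\jap{x}^{-\m'}|x-y|^2+\jap{x}^2|\x-\y|^2\bigr)^k\bigl(p_0(\y)+\jap{y}^{-\m}\bigr),
\end{equation*}
and then to split into two regimes according to the relative size of $|\y|$ and $|\x|$. Throughout, the bracket is $\geq 1$, so any step where the required inequality already follows from the factor $p_0(\y)+\jap{y}^{-\m}$ alone is trivially handled.

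When $|\y|\geq|\x|/2$, I would obtain $|\x|^{2k}\lesssim p_0(\y)$ directly from Assumption \ref{symbolass}: if $|\y|\leq 1$ then $p_0(\y)\sim|\y|^{2k}\gtrsim|\x|^{2k}$, and if $|\y|\geq 1$ then $p_0(\y)\gtrsim\jap{\y}^{2m}\geq 1\geq|\x|^{2k}$. In either case the claim follows at once.

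When $|\y|<|\x|/2$, the triangle inequality yields $|\x-\y|\geq|\x|/2$, whence
\begin{equation*}
|\x|^{2k}\lesssim |\x-\y|^{2k}=\jap{x}^{-2k}\bigl(\jap{x}^2|\x-\y|^2\bigr)^k\leq \jap{x}^{-2k}\bigl(1+\jap{x}^2|\x-\y|^2\bigr)^k.
\end{equation*}
It then suffices to control $\jap{x}^{-2k}$ by $\jap{y}^{-\m}$ at the cost of the first term of the bracket. Since $\m<2k$ and $\m'<2$, a short case split on $\jap{y}\lesssim\jap{x}$ versus $\jap{y}\gg\jap{x}$ (the latter forcing $|x-y|\gtrsim\jap{y}$, and hence $\jap{x}^{-\m'}|x-y|^2\gtrsim\jap{x}^{-\m'}\jap{y}^2$) will give
\begin{equation*}
\jap{x}^{-2k}\lesssim \bigl(1+\jap{x}^{-\m'}|x-y|^2\bigr)^k\jap{y}^{-2k}\leq \bigl(1+\jap{\x}^2\jap{x}^{-\m'}|x-y|^2+\jap{x}^2|\x-\y|^2\bigr)^k\jap{y}^{-\m},
\end{equation*}
where at the end I use $\jap{y}^{-2k}\leq\jap{y}^{-\m}$ and $\jap{\x}\geq 1$. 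Combining with the previous display yields the claim, possibly with a larger integer power of the bracket from multiplying the two $k$-th powers, which is harmless.

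The main obstacle is precisely the last regime, where $|\y|\ll|\x|$ and $\jap{x}\gg\jap{y}$: here one cannot freely replace $\jap{x}^{-2k}$ by $\jap{y}^{-\m}$, and must pay the $|x-y|$-factor dictated by the metric $g$. In spirit, what is being checked is the uniform $(A,g)$-temperateness of $p_0+\jap{x}^{-\m}$ in the low-frequency region $|\x|\leq 1$, where the zero of $p_0$ at $\x=0$ interacts with the slow decay of $\jap{x}^{-\m}$; the weight $\jap{x}^{-\m}$ by itself was already handled in Lemma \ref{xtemp}, and the present lemma combines this with the homogeneity $p_0(\x)\sim|\x|^{2k}$ near the origin via the elementary triangle-inequality split described above.
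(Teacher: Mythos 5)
Your proof is correct and follows essentially the same strategy as the paper: triangle inequality on $\x$ to absorb $|\x|^{2k}$ into $|\x-\y|^{2k}$, then pay the resulting $\jap{x}^{-2k}$ via the $|x-y|$-component of the bracket, landing (like the paper) on the exponent $2k$ rather than the stated $k$. Your case decomposition (splitting first on $|\y|\gtrless|\x|/2$, then on $\jap{y}\lesssim\jap{x}$ versus $\jap{y}\gg\jap{x}$) is a somewhat cleaner reshuffling of the paper's split on $|\y|\gtrless 1/2$ and on $\jap{x}^{2k}\jap{y}^{-\m}$ via Lemma \ref{xylemma}, but the key mechanism is identical.
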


\begin{proof}

\noindent$(i)$
First, we consider the case where $|\y|\geq 1/2$ holds. Due to Assumption \ref{symbolass}, we have $p_0(\y)\gtrsim |\y|^{2m}$. Hence the trivial inequality $1\leq (1+\jap{\x}^2\jap{x}^{-\m}|x-y|^2+\jap{x}^2|\x-\y|^2 )^k$ shows 
\begin{align*}
&p_0(\x)\underbrace{\lesssim}_{\text{Assumption \ref{potass}}} |\x|^{2k}\leq 1\leq |\y|^{2m}\lesssim p_0(\y)\\&\leq (1+\jap{\x}^2\jap{x}^{-\m'}|x-y|^2+\jap{x}^2|\x-\y|^2 )^{2k}(p_0(\y)+\jap{y}^{-\m}).
\end{align*}

\noindent$(ii)$
Next, we consider the case where $|\y|\leq 1/2$. Then, it suffices to prove
\begin{align*}
|\x|^{2k}\lesssim (1+\jap{\x}^2\jap{x}^{-\m'}|x-y|^2+\jap{x}^2|\x-\y|^2 )^{2k}(|\y|^{2k}+\jap{y}^{-\m})
\end{align*}
since $p_0(\x)\lesssim|\x|^{2k}$ and $|\y|^{2k}\lesssim p_0(\y)$ in this case.

\noindent$(ii-1)$
We suppose that $\jap{x}^{2k}\jap{y}^{-\m}\geq 2^{-k}$ with a constant $c>0$.
Then
\begin{align*}
|\x|^{2k}\lesssim& |\x-\y|^{2k}+|\y|^{2k}\\
\lesssim&|\x-\y|^{2k}\jap{x}^{2k}\jap{y}^{-\m}+|\y|^{2k}\\
\lesssim&(1+\jap{\x}^2\jap{x}^{-\m'}|x-y|^2+\jap{x}^2|\x-\y|^2 )^{2k}(|\y|^{2k}+\jap{y}^{-\m}).
\end{align*}

\noindent$(ii-2)$ We suppose that $\jap{x}^{2k}\jap{y}^{-\m}\leq 2^{-k}$. By Lemma \ref{xylemma}, we have $\jap{y}\lesssim |x-y|$.
Since $0<\m<2k$, we have $\jap{y}^{\m}\leq \jap{y}^{2k}\lesssim |x-y|^{2k}$. Using this inequality, we obtain
\begin{align*}
|\x|^{2k}&\lesssim  |\x-\y|^{2k}+|\y|^{2k}\underbrace{\lesssim}_{\jap{y}^{\m}\lesssim |x-y|^{2k}}\jap{y}^{-\m}|x-y|^{2k}|\x-\y|^{2k}+|\y|^{2k} \\
\underbrace{\leq}_{1\leq \jap{x},\jap{\x}}& \jap{x}^{2k-\m}\jap{\x}^{2k} \jap{y}^{-\m}|x-y|^{2k}|\x-\y|^{2k}+|\y|^{2k}\\
\underbrace{\leq}_{\m=k\m'}&(1+\jap{\x}^2\jap{x}^{-\m'}|x-y|^2+\jap{x}^2|\x-\y|^2 )^{2k}(|\y|^{2k}+\jap{y}^{-\m}),
\end{align*}
where we use the estimate
\begin{align*}
\jap{x}^{2k-\m}\jap{\x}^{2k} \jap{y}^{-\m}|x-y|^{2k}|\x-\y|^{2k}=& (\jap{x}^{2-\m'}\jap{\x}^2|x-y|^{2}|\x-\y|^{2})^k \jap{y}^{-\m}\\
\leq&((1+\jap{\x}^2\jap{x}^{-\m'}|x-y|^2+\jap{x}^2|\x-\y|^2)^2  )^k \jap{y}^{-\m}
\end{align*}
 in the last line. This completes the proof.

\end{proof}

Finally, we prove:
\begin{lem}
Let $N$ be an integer such that $N>m$. For $|\x|\geq 1$, we have
\begin{align*}
p_0(\x)\lesssim (1+\jap{\x}^2\jap{x}^{-\m'}|x-y|^2+\jap{x}^2|\x-\y|^2 )^{N}(p_0(\y)+\jap{y}^{-\m}).
\end{align*}
for $|\x|\geq 1$.
\end{lem}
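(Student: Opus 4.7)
The starting reduction is that, since $|\xi|\geq 1$, Assumption~\ref{symbolass}$(i)$ gives $p_0(\xi)\lesssim \jap{\xi}^{2m}$, so it suffices to prove
\[
\jap{\xi}^{2m}\lesssim M^N(p_0(\eta)+\jap{y}^{-\mu}),\qquad M:=1+\jap{\xi}^2\jap{x}^{-\mu'}|x-y|^2+\jap{x}^2|\xi-\eta|^2.
\]
I will split into cases on $|\eta|$, and within the harder case, on the relative sizes of $\jap{x}$ and $\jap{y}$.

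When $|\eta|\geq 1$, Assumption~\ref{symbolass}$(i)$ yields $p_0(\eta)\gtrsim \jap{\eta}^{2m}\gtrsim 1$, so $p_0(\eta)+\jap{y}^{-\mu}$ is bounded below. Combining the triangle-inequality bound $\jap{\xi}^{2m}\lesssim \jap{\eta}^{2m}+|\xi-\eta|^{2m}$ with $|\xi-\eta|^2\leq\jap{x}^2|\xi-\eta|^2\leq M$ (using $\jap{x}\geq 1$) and $M\geq 1$, $N>m$, closes this easier case.

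The substantive case is $|\eta|\leq 1$, where $p_0(\eta)\sim |\eta|^{2k}$ may vanish and the decay factor $\jap{y}^{-\mu}$ must carry the estimate. I may further assume $|\xi|\geq 2$ (the transitional range $1\leq |\xi|\leq 2$ gives $\jap{\xi}^{2m}\lesssim 1$ and is subsumed by the analysis below), so that $|\xi-\eta|\geq |\xi|/2\sim \jap{\xi}$, whence both $M\gtrsim \jap{\xi}^2\jap{x}^{-\mu'}|x-y|^2$ and $M\gtrsim \jap{x}^2\jap{\xi}^2$ hold. If $\jap{y}\leq 2\jap{x}$, then $\jap{y}^{-\mu}\gtrsim \jap{x}^{-\mu}$, and the second bound on $M$ yields $M^N\jap{y}^{-\mu}\gtrsim \jap{x}^{2N-\mu}\jap{\xi}^{2N}$, which dominates $\jap{\xi}^{2m}$ as soon as $2N\geq \mu$ and $N\geq m$. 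If instead $\jap{y}\geq 2\jap{x}$, then $|x-y|\gtrsim \jap{y}$, and I will use a weighted geometric mean: setting $a=\jap{\xi}^2\jap{x}^{-\mu'}|x-y|^2$ and $b=\jap{x}^2\jap{\xi}^2$, one has $M\geq a^{\theta}b^{1-\theta}\gtrsim \jap{\xi}^2\jap{x}^{2(1-\theta)-\theta\mu'}\jap{y}^{2\theta}$ for any $\theta\in[0,1]$; choosing $\theta=2/(2+\mu')$ kills the $\jap{x}$-exponent and gives $M\gtrsim \jap{\xi}^2\jap{y}^{4/(2+\mu')}$, whence $M^N\jap{y}^{-\mu}\gtrsim \jap{\xi}^{2N}\jap{y}^{4N/(2+\mu')-\mu}$, dominating $\jap{\xi}^{2m}$ provided $N\geq m$ and $4N/(2+\mu')\geq \mu$.

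The main obstacle is this final sub-case where $\jap{y}\gg \jap{x}$ and $p_0(\eta)\approx 0$: the factor $\jap{x}^{-\mu'}\leq 1$ in the first piece of $M$ cannot be ignored, and extracting sufficient growth in $\jap{y}$ without losing a negative power of $\jap{x}$ forces one to balance the $\jap{x}^{-\mu'}$ from the first piece against the positive power of $\jap{x}$ in the third piece, which is exactly what the weighted geometric mean accomplishes. The conclusion then follows by taking $N$ as a sufficiently large integer, namely $N\geq \max(m,\mu(2+\mu')/4)$; in particular, the required integer $N>m$ in the statement must be interpreted as large enough to exceed this threshold.
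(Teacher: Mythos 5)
Your argument is correct in its essential structure and takes a genuinely different route from the paper. The paper splits on $|\y|\lessgtr 1/2$ and on $\jap{x}^{2k}\jap{y}^{-\m}\lessgtr 2^{-k}$, invokes Lemma~\ref{xylemma} to pass from $\jap{x}^{2k}\jap{y}^{-\m}<2^{-k}$ to $\jap{y}\lesssim|x-y|$, and then simply inserts the large factor $\jap{x}^{(2-\m')N}\jap{\x}^{2N}\geq 1$ into the display. You instead dichotomize on the relative size of $\jap{x}$ and $\jap{y}$ (which makes $|x-y|\gtrsim\jap{y}$ immediate without the xylemma), and the heart of your hard case is a weighted geometric mean of the two positive pieces of $M$, with $\theta=2/(2+\m')$ chosen precisely to annihilate the $\jap{x}$-power. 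This is a clean and natural mechanism and gives a more transparent explanation of why $\m'<2$ matters. Your threshold $N\geq\max(m,\m(2+\m')/4)$ differs in form from the paper's $\max(k,m,\m/2)$, but since the lemma only asserts existence of a suitable $N$ (and, as you correctly observe, the paper's own proof already uses a larger $N$ than the statement nominally requires), this discrepancy is immaterial.

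There is, however, one real gap in your case analysis. You dispose of the transitional range $1\leq|\x|\leq 2$, $|\y|\leq 1$ by asserting it is ``subsumed by the analysis below,'' but that analysis rests on $|\x-\y|\gtrsim\jap{\x}$ (needed for $M\gtrsim\jap{x}^2\jap{\x}^2$), and this lower bound fails for $1\leq|\x|\leq 2$ when $|\y|$ is close to $|\x|$, e.g.\ $\x=(1,0,\hdots)$, $\y=(0.99,0,\hdots)$. The conclusion is nonetheless true there — when $|\y|\in(1/2,1]$ one has $p_0(\y)\sim|\y|^{2k}\gtrsim 1$, so the easy-case triangle inequality closes it — but this is a different argument, not the one ``below.'' The cleanest repair is to split at $|\y|=1/2$ instead of $|\y|=1$ (as the paper does): for $|\y|\geq 1/2$ the bound $p_0(\y)\gtrsim 1$ makes the right side bounded below and the triangle inequality suffices for all $|\x|\geq 1$; for $|\y|\leq 1/2$ and $|\x|\geq 1$ one has $|\x-\y|\geq|\x|-1/2\geq|\x|/2\gtrsim\jap{\x}$ uniformly, so your geometric-mean argument then applies without any restriction to $|\x|\geq 2$.
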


\begin{proof}

Let $N$ be the smallest integer greater than or equal to $\max(k,m,\m/2)$.

\noindent$(i)$ First, we suppose $|\y|\geq \frac{1}{2}$. If $\jap{x}^{2k}\jap{y}^{-\m}\geq 2^{-k}$ holds, then
\begin{align*}
|\x|^{2m}\lesssim& |\x-\y|^{2m}+|\y|^{2m}\underbrace{\lesssim}_{\mathclap{\jap{x}^{2k}\jap{y}^{-\m}\geq 2^{-k}}} \jap{x}^{2k}\jap{y}^{-\m}|\x-\y|^{2m}+ |\y|^{2m}\\
\lesssim&(1+\jap{\x}^2\jap{x}^{-\m}|x-y|^2+\jap{x}^2|\x-\y|^2 )^{\max(k,m)}(|\y|^{2m}+\jap{y}^{-\m}).
\end{align*}
Next, we consider the case $\jap{x}^{2k}\jap{y}^{-\m}< 2^{-k}$. Due to Lemma \ref{xylemma}, $\jap{y}\lesssim |x-y|$. Then we have $1\lesssim \jap{y}^{-\m}|x-y|^{2N}$ since $2N\geq \m$.
Hence
\begin{align*}
&|\x|^{2m}\lesssim  |\x-\y|^{2N}+|\y|^{2m}\underbrace{\lesssim}_{\mathclap{1\lesssim \jap{y}^{-\m}|x-y|^{2N}}}\jap{y}^{-\m}|x-y|^{2N}|\x-\y|^{2N}+|\y|^{2m}\\
&\underbrace{\lesssim}_{\mathclap{1\leq \jap{x}, \jap{\x},\, 0<\m'<2}} \jap{x}^{(2-\m')N}\jap{\x}^{2N} \jap{y}^{-\m}|x-y|^{2N}|\x-\y|^{2N}+|\y|^{2m}\\
&\leq(1+\jap{\x}^2\jap{x}^{-\m'}|x-y|^2+\jap{x}^2|\x-\y|^2 )^{N}(|\y|^{2m}+\jap{y}^{-\m}).
\end{align*}

\noindent$(ii)$ Next, we suppose $|\y|\leq \frac{1}{2}$. In this case, $|\x-\y|\geq \frac{1}{2}|\x|$ holds.

If $\jap{x}^{2k}\jap{y}^{-\m}\geq 2^{-k}$ holds, then
\begin{align*}
|\x|^{2m}\underbrace{\lesssim}_{\mathclap{|\x|\geq 1,\, N\geq m}}& |\x|^{2N}\underbrace{\lesssim}_{|\x|\geq 1,\,|\x-\y|\geq \frac{1}{2}|\x|} |\x-\y|^{2N} \underbrace{\lesssim}_{\mathclap{\jap{x}^{2k}\jap{y}^{-\m}\geq 2^{-k},\, N\geq k}}\jap{x}^{2N}\jap{y}^{-\m}|\x-\y|^{2N}\\
\lesssim&(1+\jap{\x}^2\jap{x}^{-\m}|x-y|^2+\jap{x}^2|\x-\y|^2 )^N(|\y|^{2m}+\jap{y}^{-\m}).
\end{align*}

If $\jap{x}^{2k}\jap{y}^{-\m}< 2^{-k}$ holds, then $\jap{y}\lesssim |x-y|$  by Lemma \ref{xylemma}. Then we have $1\lesssim \jap{y}^{-\m}|x-y|^{2N}$ since $2N\geq \m$. Hence
\begin{align*}
|\x|^{2m}&\underbrace{\lesssim}_{\mathclap{|\x|\geq 1,\,|\x-\y|\geq \frac{1}{2}|\x|,\, N\geq m}}  |\x-\y|^{2N}\underbrace{\lesssim}_{\jap{y}^{\m}\lesssim |x-y|^{2N}}\jap{y}^{-\m}|x-y|^{2N}|\x-\y|^{2N} \\
\underbrace{\leq}_{\mathclap{1\leq \jap{x},\jap{\x},\,0<\m'<2}}& \jap{x}^{(2-\m')N}\jap{\x}^{2N} \jap{y}^{-\m}|x-y|^{2N}|\x-\y|^{2N}\\
\underbrace{\leq}_{\m=k\m'}&(1+\jap{\x}^2\jap{x}^{-\m'}|x-y|^2+\jap{x}^2|\x-\y|^2 )^{2k}(|\y|^{2k}+\jap{y}^{-\m}).
\end{align*}
Since $p_0(\x)\sim |\x|^{2m}$ and $p_0(\y)\sim |\y|^{2k}$, these complete the proof.

\end{proof}

Now we have proved Lemma \ref{lemweight} $(i)$.

\subsection*{Proof of Lemma \ref{lemweight} $(ii)$}

It suffices to prove that 
\begin{align}\label{homxisym1}
|\pa_{\x}^{\b}\x^{\a}|\lesssim \jap{x}^{\m'|\b|}\jap{\x}^{\frac{\max(k-m,0)}{k}|\a|-|\b|}(p+\l)^{\frac{|\a|}{2k}}
\end{align}
 for all $\a,\b\in\mathbb{Z}_{\geq 0}^n$. We may assume $\b\leq \a$.
 Since $p_0(\x)\sim |\x|^{2k}$ for $|\x|\leq 1$ and $p(x,\x)^{-1}\lesssim\jap{x}^{\m}$, we have $|\pa_{\x}^{\b}\x^{\a}|\lesssim |\x|^{|\a|-|\b|}\lesssim p(x,\x)^{\frac{|\a|-|\b|}{2k}}\lesssim \jap{x}^{\frac{\m'}{2}|\b|}p(x,\x)^{\frac{|\a|}{2k}}$ for $|\x|\leq 1$, where we recall $\m'=\m/k$. This proves \eqref{homxisym1} for $|\x|\leq 1$. On the other hand, using $p_0(\x)\sim |\x|^{2m}$ for $|\x|\geq 1$, we obtain $|\pa_{\x}^{\b}\x^{\a}|\lesssim |\x|^{|\a|-|\b|}\lesssim \jap{\x}^{-|\b|}p(x,\x)^{\frac{|\a|}{2m}}$ and $p(x,\x)^{\frac{|\a|}{2m}}\sim \jap{\x}^{\frac{k-m}{k}|\a|}p(x,\x)^{\frac{|\a|}{2k}}\lesssim \jap{\x}^{\frac{\max(k-m,0)}{k}|\a|}(p+\l)^{\frac{|\a|}{2k}}$ for $|\x|\geq 1$. These inequalities imply \eqref{homxisym1} for $|\x|\geq 1$. This completes the proof.

\end{document}